\theoremstyle{plain}
\newtheorem{theorem}{Theorem}[section]
\newtheorem{lemma}[theorem]{Lemma}
\newtheorem{proposition}[theorem]{Proposition}
\newtheorem{corollary}[theorem]{Corollary}
\theoremstyle{definition}
\newtheorem{exercise}[theorem]{Exercise}
\newtheorem{example}[theorem]{Example}
\newtheorem{definition}[theorem]{Definition}
\newtheorem{algorithm}[theorem]{Algorithm}
\newcommand{\wimplies}{\ \ \ \implies \ \ \  }
\newcommand{\shear}{\left[\begin{smallmatrix}1&1 \\ 0 &1 \end{smallmatrix}\right]}
\newcommand{\upshear}{\left[\begin{smallmatrix}1&0 \\ 1 &1 \end{smallmatrix}\right]}
\newcommand{\nupshear}{\left[\begin{smallmatrix}1&0 \\ -1 &1 \end{smallmatrix}\right]}
\newcommand{\stt}[4]{\left[\begin{smallmatrix}#1 & #2 \\  #3 & #4 \end{smallmatrix}\right]}
\newcommand{\lst}{$\mathsf{L}$-shaped table }
\begin{document}

\title{Lines in positive genus: \\ An introduction to flat surfaces}
\author{Diana Davis}




\maketitle

\noindent {\bf \Large Preface}

This text is aimed at undergraduates, or anyone else who enjoys thinking about shapes and numbers. The goal is to encourage the student to think deeply about seemingly simple things. The main objects of study are lines, squares, and the effects of simple geometric motions on them. Much of the beauty of this subject is explained through the text and the figures, and some of it is left for the student to discover in the exercises. We want readers to ``get their hands dirty'' by thinking about examples and working exercises, and  to discover the elegance and richness of this area of mathematics.

\newpage
\tableofcontents

\newpage
\section{Billiards} \label{squaretorus} 


There is a lot of rich mathematics in the study of \emph{billiards}, \index{billiards} a ball bouncing around inside a billiard table. In the game of billiards, the table is rectangular, but we can imagine any shape of table $-$ a triangle, a circle, a $\mathsf{W}$, an infinite sector, or any other shape.\footnote{Sections \ref{squaretorus} and \ref{st} first appeared in \cite{snapshot}.}

Consider the simplest case of a polygonal table: a square. We will assume that the ball is just a point, moving with no friction (so it goes forever), and that when it hits the edge of the table, the angle of reflection is equal to the angle of incidence, as in real life. 

Is it possible to hit the ball so that it repeats its path? Yes: If we hit it vertically or horizontally, it will bounce back and forth between two points on parallel edges (Figure \ref{squaretraj}a). We say that this trajectory is \emph{periodic}, with \emph{period $2$}. Other examples, with period $4$ and $6$, respectively, are in (b) and (c). \\

\begin{figure}[!h]
\begin{center}
\includegraphics[width=50pt]{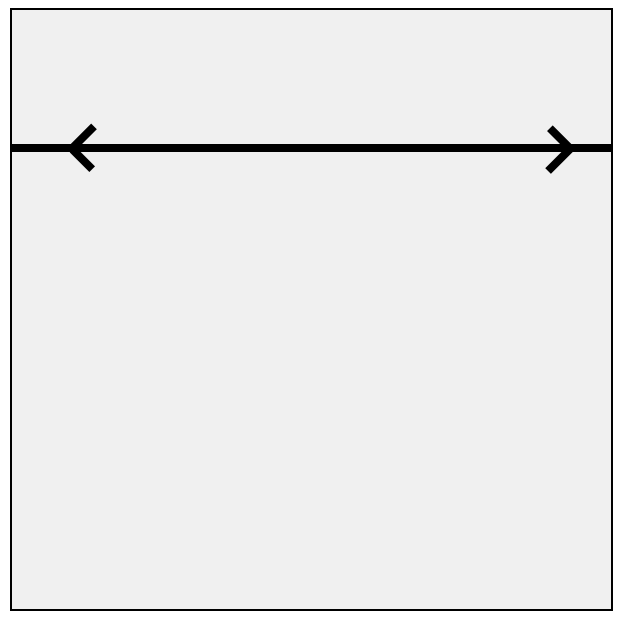} \ \ 
\includegraphics[width=50pt]{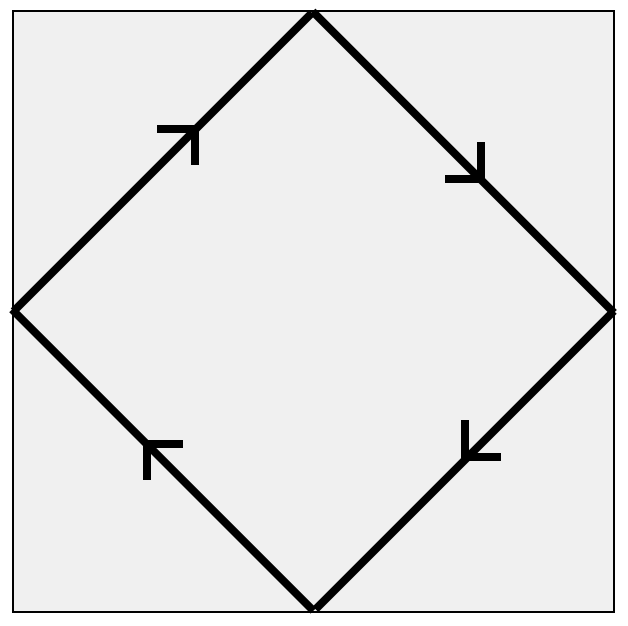} \ \
\includegraphics[width=50pt]{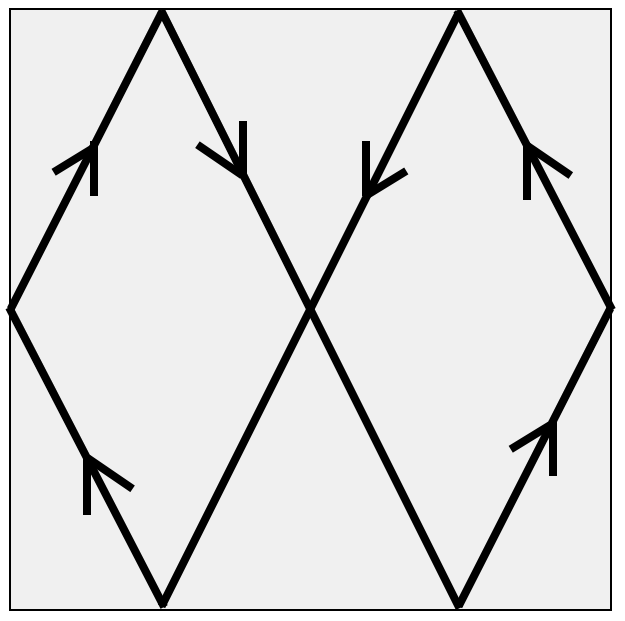} \ \
\includegraphics[width=50pt]{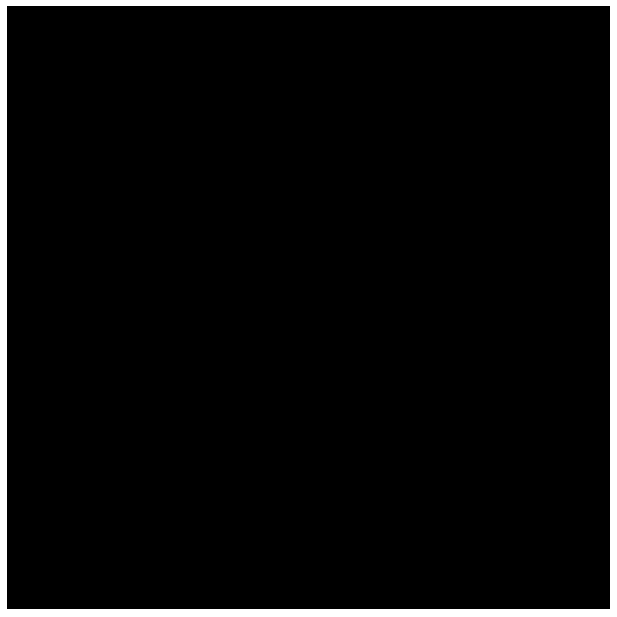} \\
\caption{Trajectories with period $2,4$ and $6$, and a non-periodic trajectory \label{squaretraj}}
\end{center}
\end{figure}

Is it possible to hit the ball so that it \emph{never} repeats its path? It is more difficult to draw a picture of an example of such a \emph{non-periodic} trajectory, because the trajectory never repeats, so it will gradually fill up the table until the picture is a black square (d). However, if we don't restrict ourselves to just the table, we can draw the trajectory, by \emph{unfolding} the table: \index{unfolding}

Consider the simple trajectory in Figure \ref{unfold-line}a below. When the ball hits the top edge, instead of having it bounce and go downwards, we \emph{unfold} the table upward, creating another copy of the table in which the ball can keep going straight (b). In other words, rather than reflecting the \emph{ball} against the top edge, we reflect the whole \emph{table} across the top edge and let the ball go straight.

Now when the trajectory hits the right edge, we do the same thing: we unfold the table to the right, creating another copy of the table in which the ball can keep going straight (c). \index{unfolding}
We can keep doing this, creating a new square every time the trajectory crosses an edge. In this way, a trajectory on the square table is represented as a line on a piece of graph paper. \\

\begin{figure}[!h]
\begin{center}
\includegraphics[width=60pt]{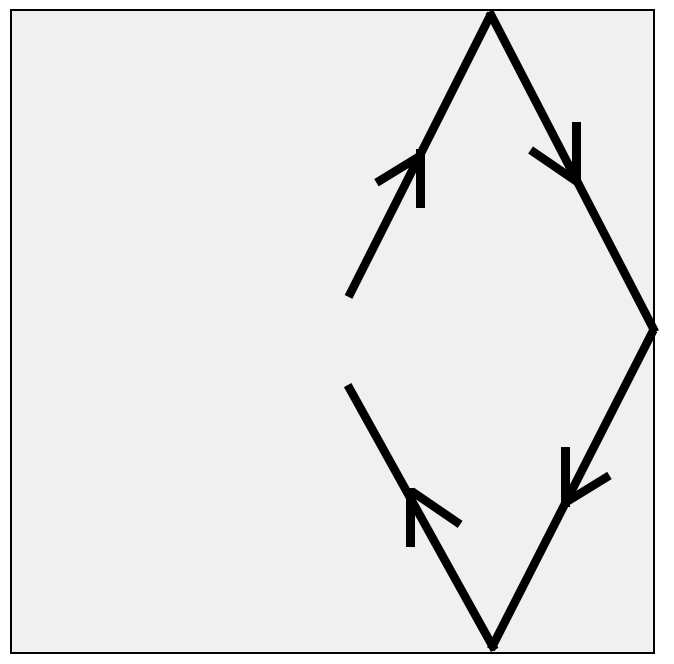} \ \ 
\includegraphics[width=60pt]{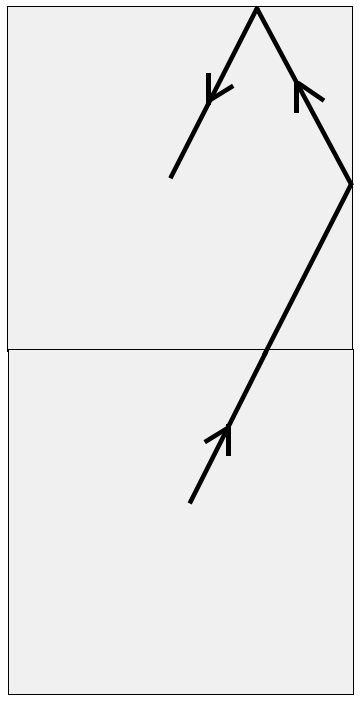} \ \ 
\includegraphics[width=115pt]{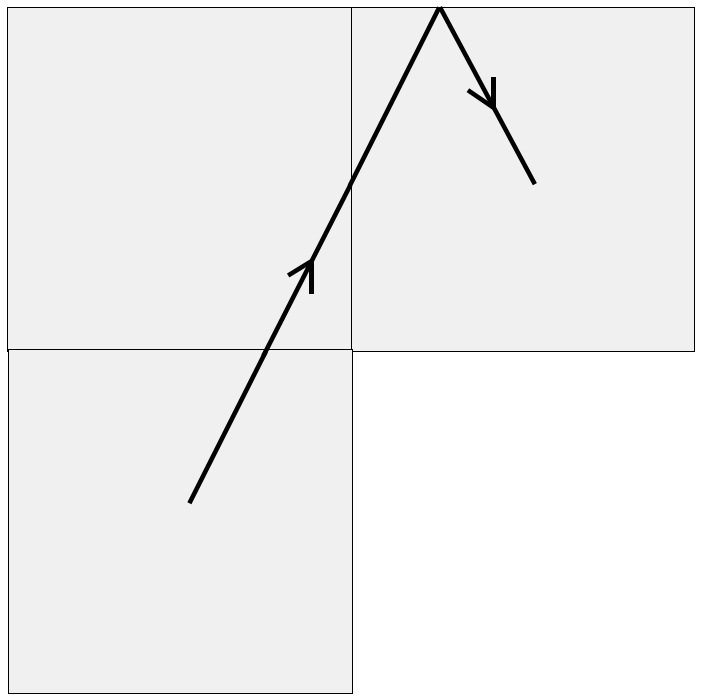} \ \ 
\includegraphics[width=68pt]{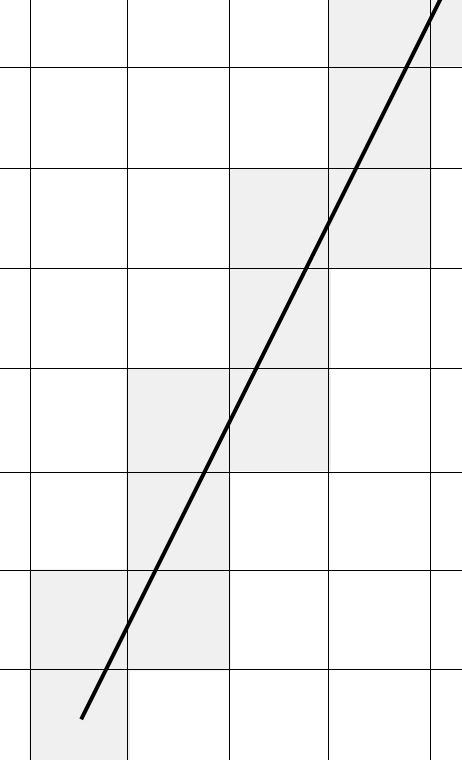} \ \ \\
\caption{Unfolding a trajectory on the square table into a straight line \label{unfold-line}}
\end{center}
\end{figure}

By thinking of the trajectory as a line on graph paper, we can easily find a non-periodic trajectory. Suppose that we draw a line with an irrational slope. Then it will never cross two different horizontal (or vertical) edges at the same point: If it did, then the slope between those corresponding points would be a ratio of two integers, but we chose the slope to be irrational, so this can't happen. So if we hit the ball with \emph{any} irrational slope, its trajectory in the table will be non-periodic. By a similar argument, if we hit the ball with any \emph{rational} slope, its trajectory in the table will be periodic. 

\begin{exercise} \label{circular-table}\index{billiards}
Draw several examples of billiard trajectories on a circular billiard table. Describe the behavior.
\end{exercise}

\begin{exercise} \label{sector-table}\index{billiards}
Consider a billiard ``table'' in the shape of an infinite sector with a small vertex angle, say $10^\circ$. Draw several examples of billiard trajectories in this sector (calculate the angles at each bounce so that your sketch is accurate). Is it possible for the trajectory to go in toward the vertex and get ``stuck''? Find an example of a trajectory that does this, or explain why it cannot happen.
\end{exercise}

\begin{exercise} \label{periodic-table}\index{billiards}
Write down the proof that a trajectory on the square billiard table is periodic if and only if its slope is rational.
\end{exercise}

\begin{exercise}\label{periodeight}\index{billiards}
Construct a periodic billiard path of period $8$ on a square table. How many can you find?
\end{exercise}

\begin{exercise}\label{periodten}\index{billiards}
Construct two different periodic billiard paths of period $10$ on a square table. 
\end{exercise}

\begin{exercise}\label{oddperiod}\index{billiards}
Construct a periodic billiard path on a square table with an odd period, or show that it is not possible to do so.
\end{exercise}

We will explore Exercises $\ref{periodeight}-\ref{oddperiod}$ in greater detail in Section \ref{revisit}.

\newpage
\section{The square torus} \label{st}\index{square torus}

Here's another way that we can unfold the square billiard table.\index{unfolding}
First, unfold across the top edge of the table, creating another copy in which the ball keeps going straight (Figure \ref{unfold-torus}). The new top edge is just a copy of the bottom edge, so we now label them both $A$ to remember that they are the same. Similarly, we can unfold across the right edge of the table, creating another copy of the unfolded table, which gives us four copies of the original table. The new right edge is a copy of the left edge, so we now label them both $B$. 

\begin{figure}[!h]
\begin{center}
\includegraphics[width=66pt]{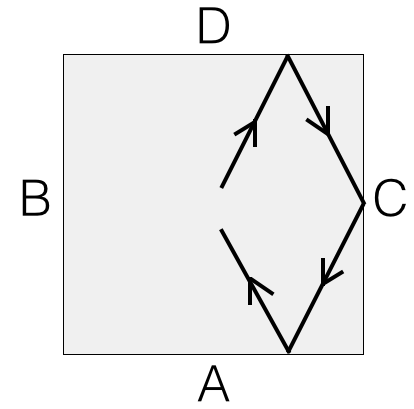} \ \ \ \ \  
\includegraphics[width=66pt]{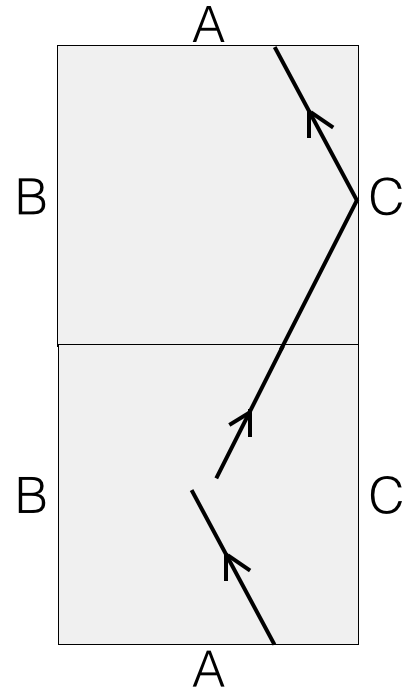} \ \ \ \ \   
\includegraphics[width=115pt]{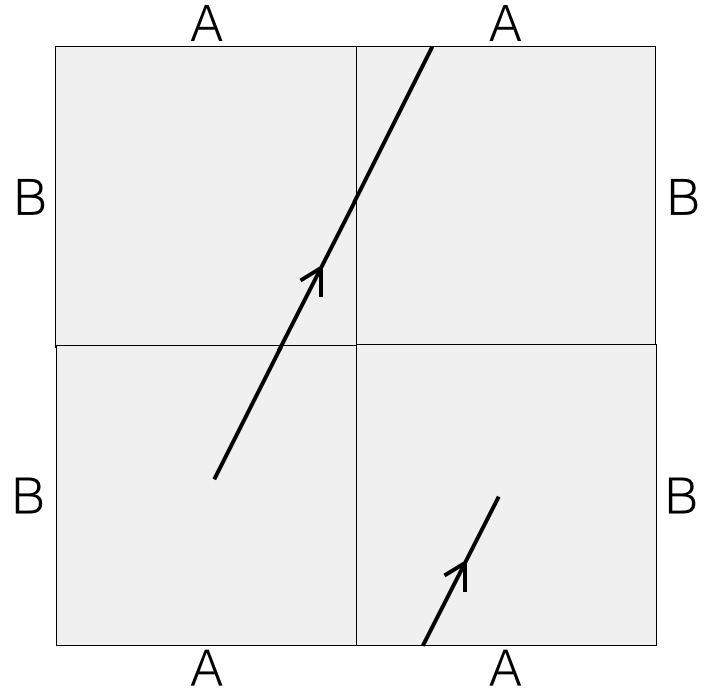} 
\caption{Unfolding the square table into the square torus \label{unfold-torus}}
\end{center}
\end{figure}

When the trajectory hits the top edge $A$, it reappears in the same place on the bottom edge $A$ and keeps going. Similarly, when the trajectory hits the right edge $B$, it reappears on the left edge $B$. This is called \emph{identifying} the top and bottom edges, and \emph{identifying} the left and right edges, of the square.\index{square torus}

You may be familiar with this idea of entering the top wall and re-emerging from the bottom wall from the video games ``Pac-Man,'' ``Snake,'' and ``Portal.'' However, you may not have realized that with these edge identifications, you are no longer on the flat plane, but are on an entirely different flat surface! This surface is actually the surface of a bagel or a donut, which is called a \emph{torus}. 

The square torus has many beautiful properties, and we will explore several of them in depth. First, let's see what the surface looks like: when we glue both copies of edge $A$ to each other, we get a cylinder, whose ends are both edge $B$ (Figure \ref{curved-torus}b). When we wrap the cylinder around to glue both copies of edge $B$ to each other (Figure \ref{curved-torus}c), we get a torus!  \index{torus}\index{square torus}

\begin{figure}[!h]
\begin{center}
\includegraphics[width=60pt]{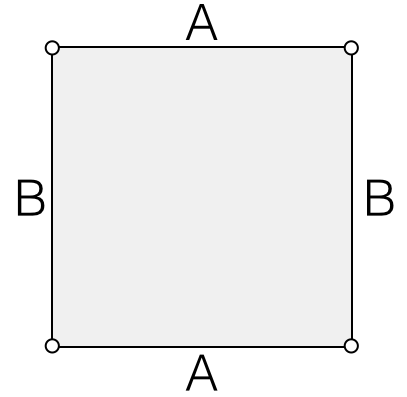} \ \ \ 
\includegraphics[width=150pt]{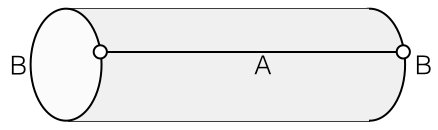} \ \ \ 
\includegraphics[width=120pt]{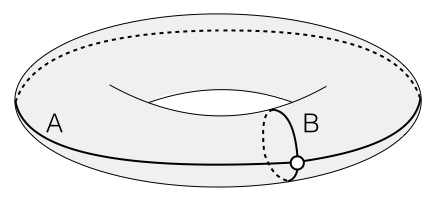} \\
\caption{Gluing the square torus into a 3D torus \label{curved-torus}}
\end{center}
\end{figure}

In practice, we'll just leave the torus as a flat square, and remember that the two pairs of parallel edges are identified. It's much easier to draw paths on a flat square than on the curved, three-dimensional picture. We'll also assume that every non-vertical trajectory on the square torus goes from left to right, so that we can omit arrows on trajectories from now on. \index{square torus}

Notice that when we identify the top and bottom edges, and identify the left and right edges, the four corners of the square all come together at one point. So the square torus actually has just one vertex (the circle in Figure \ref{curved-torus}c) $-$ all four corners of the square in Figure \ref{curved-torus}a come together at the same vertex. The angle at this vertex is $4\cdot\frac{\pi}2=2\pi$, so they fit together to make a flat angle. To a small creature living on the surface, the marked point  looks no different from any other point on the torus. In Section \ref{vert-eul-char}, we will explore surfaces that have vertex angles that are \emph{not} $2\pi$, in which case the vertex point looks different from other points on the surface.\index{vertex}\index{square torus}

It is easy to go from the square torus back to the billiard table: Imagine that we have a path drawn on a square torus made of transparent paper (Figure \ref{fold-table}). Then if we fold it in half twice, like a paper napkin, we can see the corresponding trajectory on the billiard table. So anything we learn about trajectories on the square torus also tells us about trajectories on the square billiard table. After we discover more about trajectories in Section \ref{cuttingseqs}, we will apply our knowledge to billiards in Section \ref{revisit}. 
\index{square torus}\index{billiards}

\begin{figure}[!h]
\begin{center}
\includegraphics[width=270pt]{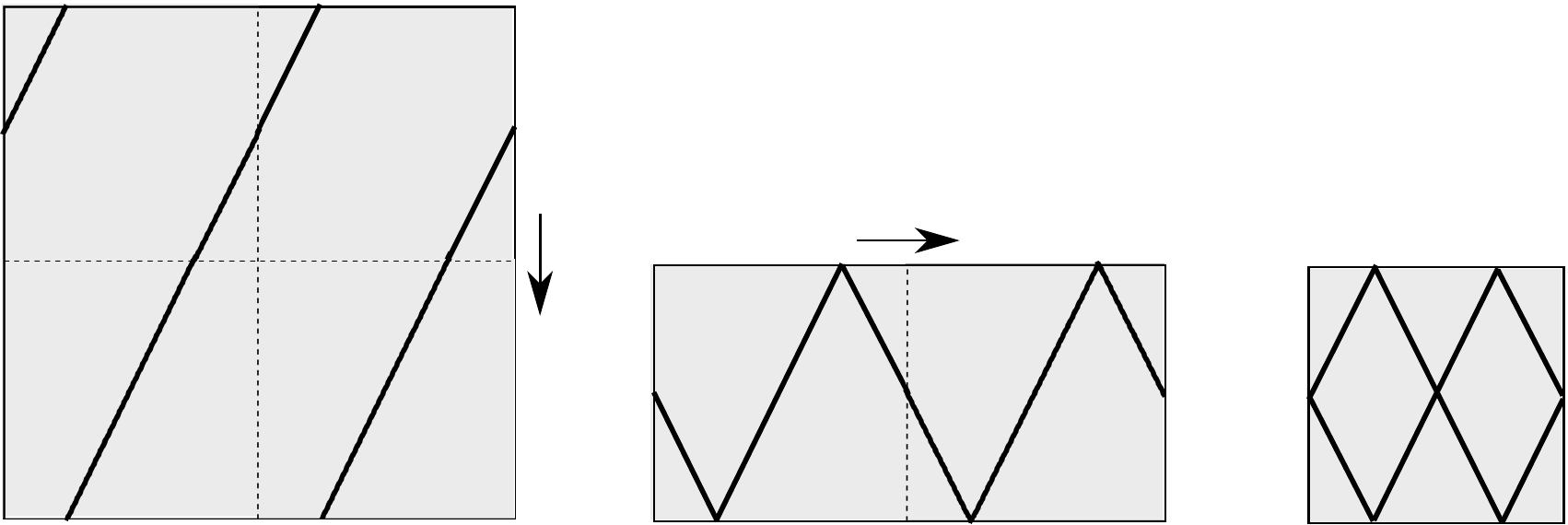} 
\caption{Folding the square torus back into the square billiard table \label{fold-table}\index{billiards}}
\end{center}
\end{figure}


We can unfold a straight line path on the square torus into a line on a piece of graph paper,  \index{unfolding}\index{square torus}\index{billiards} just as we unfolded a billiard trajectory: when the line hits the right edge of the torus, we unfold it to the right, and when it hits the top edge, we unfold it up. (You might wonder what we should do if the trajectory hits a vertex, but we can avoid this by slightly nudging the path so that it never hits a vertex.) In this way, we can see that lines on a piece of graph paper exactly correspond to straight line trajectories on the square torus. 

It is easy to go from a straight line on a piece of graph paper to a trajectory on the square torus: we can imagine that the graph paper is made of transparent squares, and stack them all on top of each other by translation (Figure \ref{line-torus}). The trajectory will now all be on one square, and that is the trajectory on the square torus. \index{square torus}

\begin{figure}[!h]
\begin{center}
\includegraphics[width=150pt]{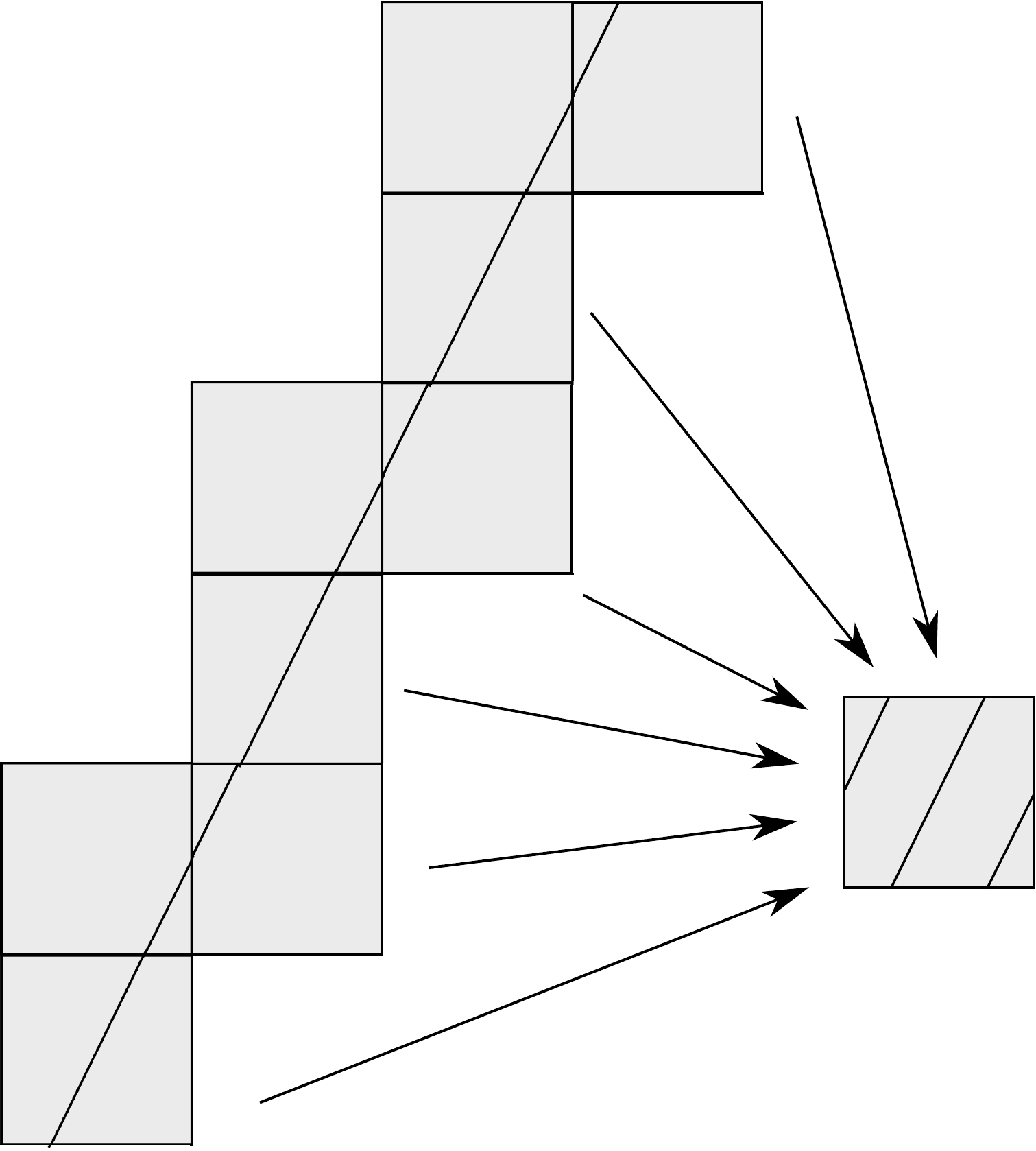} 
\caption{Stacking the linear trajectory into the square torus by translation \label{line-torus}}
\end{center}
\end{figure}

To imagine a trajectory on the flat, square torus as a trajectory on the curved, three-dimensional view of the torus, we can imagine a bug walking in a straight line on both surfaces. If the bug's path on the square torus is horizontal, the corresponding path on the 3D torus wraps around like an equator and comes back to where it started (path $A$ in Figure \ref{curved-torus}c). If the bug's path on the square torus is vertical, the corresponding path on the 3D torus passes through the hole and comes back to where it started (path $B$ in Figure \ref{curved-torus}c). If the bug's path is diagonal, it will wrap around and around the torus in a spiral, returning to its starting point if and only if the slope of the corresponding path on the square torus is rational  (Exercise \ref{periodic-torus}.)\index{square torus}\index{torus}

\begin{exercise} \label{periodic-torus}\index{square torus}
Show that a trajectory on the square torus is periodic if and only if its slope is rational. (You may use Exercise \ref{periodic-table}, the same result for the billiard table.)
\end{exercise}

\begin{exercise}\index{billiards}
Consider again the billiard table in the shape of an infinite sector from Exercise \ref{sector-table}, and show that any billiard on such a table makes finitely many bounces. (Hint: Unfold a trajectory on this table, by repeatedly reflecting the sector across the edges. The unfolding will look something like Figure \ref{unfold-octagon}.)
\end{exercise}

\newpage
\section{Cutting sequences} \label{cuttingseqs}

Given a straight line path on the square torus, we would like to be able to record where it goes. We do this with a \emph{cutting sequence}: \index{cutting sequence} When the path crosses the top/bottom edge, we record an $A$, and when it crosses the left/right edge, we record a $B$. This gives us an infinite sequence, which is the \emph{cutting sequence} corresponding to the trajectory. We assume that the trajectory extends ``backwards'' as well, so it is a bi-infinite sequence.\index{square torus}\index{cutting sequence}

\begin{example} \label{babba}
The trajectory in Figure \ref{csexample} has cutting sequence $\overline{ABABB}:$\footnote{We use the overline to indicate infinite repetition, just as $1/3=0.333\ldots = 0.\overline{3}.$ Here it is infinite in both directions: $\ldots ABABBABABB\ldots$} Let's start somewhere, say at the bottom-left intersection with edge $A$. Since it starts on edge $A$, we write down an $A$. Next, it crosses $B$ (at the midpoint), so now our cutting sequence is $AB\ldots$. Now it emerges from the left side $B$ and intersects $A$ at the top right, so our cutting sequence is $ABA\ldots$. It emerges from the right side of the bottom edge $A$ and shortly hits the right edge $B$, so our cutting sequence is $ABAB\ldots$. It emerges from the bottom of edge $B$ and hits the right edge $B$ at the top, so our cutting sequence is $ABABB\ldots$. It emerges from the left edge $B$ and hits edge $A$ at the left side, so we are back where we started. Thus the cutting sequence associated to this trajectory is the periodic sequence $\overline{ABABB}$. \index{square torus}\index{cutting sequence}

\end{example}

\begin{figure}[!h]
\begin{center}
\includegraphics[width=100pt]{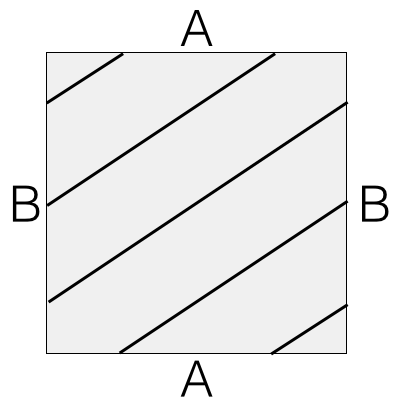} 
\caption{A trajectory with cutting sequence $\overline{ABABB}$ \label{csexample}}
\end{center}
\end{figure}

If we had started somewhere else $-$ say, the midpoint of $B$ $-$ we would write down the cutting sequence $\overline{BABBA}$, which is a cyclic permutation of $\overline{ABABB}$, and  since the sequence is bi-infinite, it is the same cutting sequence.

If we nudge the trajectory up or down a little bit, the corresponding cutting sequence does not change (assuming that we don't hit a vertex). \index{cutting sequence}  In fact, the cutting sequence corresponding to a trajectory on the square torus depends \emph{only} on its slope, as we will see (Algorithm \ref{cftocs} and Example \ref{constructcuttseq}).\footnote{For some surfaces, the cutting sequence depends on the slope and also on the location; see Example \ref{different-period}.}\index{square torus}\index{cutting sequence}

For example, the trajectory in Figure \ref{csexample} with cutting sequence $\overline{ABABB}$ has slope $2/3$. Let's see why this is the case: an $A$ in the cutting sequence means that we have gone up $1$ unit, and a $B$ in the cutting sequence means that we have gone to the right $1$ unit. So 
\begin{equation*}
\text{slope} = \frac{\text{rise}}{\text{run}} = \frac{\# \text{ of As}}{\# \text{ of Bs}} \text{ in the cutting sequence.}
\end{equation*}

Here is a strategy for drawing a trajectory with slope $p/q$: We'll make marks along edges $A$ and $B$, and then connect them up with segments. A trajectory with slope $p/q$ intersects edge $A$ $p$ times and edge $B$ $q$ times (Exercise \ref{pplusq}), and these intersections are equally spaced. 

If $p$ is odd, make a mark at the midpoint of edge $A$, and then space out the remaining $(p-1)/2$ marks on either side evenly, except that the space between the leftmost and rightmost marks and the corners is half as big (the space is actually the same size, but it is split between the left and the right). For example, for $p=5$, you will mark $1/10, 3/10, 1/2, 7/10$ and $9/10$ of the way along edge $A$.\index{square torus}

If $p$ is even, again make equally-spaced marks along edge $A$, with the leftmost and rightmost marks being half the distance from the corners. For example, for $p=2$, you will mark $1/4$ and $3/4$ of the way along edge $A$ (Figure \ref{csmarks}).

Now do the same for $q$ and edge $B$. Finally, connect the marks with line segments of slope $p/q$, which is easiest if you start in a corner and simply match them up. You will find that the line segments are parallel and equally spaced!

\begin{figure}[!h]
\begin{center}
\includegraphics[width=100pt]{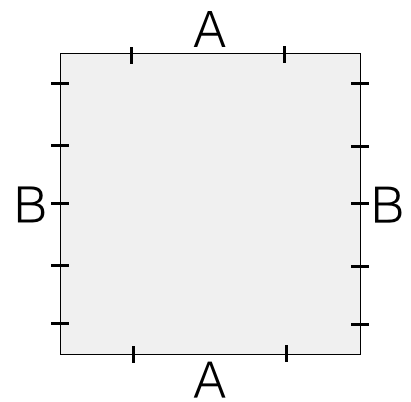} 
\caption{Marks for a trajectory with slope $2/5$ or $-2/5$ \label{csmarks}}
\end{center}
\end{figure}

You might wonder what the cutting sequence is for a trajectory that hits a vertex. We can avoid this problem by nudging the trajectory so that it does not hit the vertex, and crosses edge $A$ or $B$ nearby instead. If we choose for the trajectory to hit the vertex, the trajectory stops there, so the corresponding trajectory is a ray instead of a line, and the corresponding cutting sequence is only infinite in one direction.\index{square torus}

\begin{exercise}
Draw a trajectory on the square torus with slope $3/4$, and do the same for two other slopes of your choice. For each trajectory, write down the corresponding cutting sequence (use your picture).
\end{exercise}

\begin{exercise}
In Figure \ref{csmarks}, we put $2$ marks on edge $A$ and $5$ marks on edge $B$ and connected up the marks to create a trajectory with slope $2/5$. What if we did the same procedure for $4$ marks on edge $A$ and $10$ marks on edge $B$?
\end{exercise}

\begin{exercise} \label{pplusq}\index{cutting sequence}\index{square torus}
\begin{enumerate}
\item[(a)] Show that a trajectory on the square torus of slope $p/q$ (in lowest terms) crosses edge $A$ $p$ times and edge $B$ $q$ times, and thus has period $p+q$.
\item[(b)] Suppose that a given cutting sequence has period $n$. Are there any values of $n$ for which you can determine the cutting sequence (perhaps up to some symmetry) from this information?
\end{enumerate}
\end{exercise}

An active area of research is to describe all possible cutting sequences on a given surface. On the square torus, that question is: ``Which infinite sequences of $A$s and $B$s are cutting sequences corresponding to a trajectory?'' \index{square torus}

Let's answer an easier question: How can you tell that a given infinite sequence of $A$s and $B$s is \emph{not} a cutting sequence? 

Here's one way to tell that a given sequence is \emph{not} a cutting sequence.

\begin{proposition}\label{slope-contradiction}
If an infinite sequence of $A$s and $B$s has two $A$s in a row somewhere and also has two $B$s in a row somewhere, then it is \emph{not} a cutting sequence corresponding to a trajectory on the square torus.
\end{proposition}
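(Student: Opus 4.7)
The plan is to unfold the trajectory, per Section \ref{st}, to a straight line $L$ of slope $m$ on the plane. In this picture an $A$ in the cutting sequence corresponds to a crossing of a horizontal gridline $\{y = k\}$ and a $B$ corresponds to a crossing of a vertical gridline $\{x = n\}$. Two consecutive letters in the cutting sequence correspond to two consecutive gridline crossings along $L$, i.e., the segment of $L$ between them crosses no other gridline. The whole argument then boils down to translating "$BB$ appears" and "$AA$ appears" into constraints on $|m|$ that cannot hold simultaneously.

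I would first show that if the pair $BB$ appears in the cutting sequence, then $|m| < 1$. Indeed, a consecutive $BB$ means there is an integer $n$ such that $L$ passes from $x=n$ to $x=n+1$ without crossing any horizontal gridline. Over this run of length $1$, the $y$-coordinate changes by exactly $|m|$, so the closed interval $[\min y, \max y]$ has length $|m|$. Since this interval contains no integer (no $A$ crossing happens, and under the nudging convention the endpoints themselves are non-integer), and any interval of length $\geq 1$ with non-integer endpoints contains an integer in its interior, we must have $|m| < 1$. The symmetric argument, swapping the roles of $x$ and $y$, shows that an occurrence of $AA$ forces $|m| > 1$.

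The conclusion is then immediate: if both $AA$ and $BB$ appear, we would need $|m| > 1$ and $|m| < 1$ at the same time, which is impossible. I would close by noting the degenerate cases to be complete. A horizontal trajectory has $m=0$ and cutting sequence consisting entirely of $B$s, so $AA$ does not appear; a vertical trajectory has cutting sequence consisting entirely of $A$s, so $BB$ does not appear. The only step that needs any real care is the claim that, under the no-vertex assumption, an interval of length $\geq 1$ with non-integer endpoints must contain an integer, and that we are entitled to this no-vertex assumption even at the specific location where $AA$ or $BB$ occurs; this is the one point where the nudging convention introduced in Section~\ref{st} has to be invoked explicitly.
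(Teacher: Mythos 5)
Your proof is correct, and it follows exactly the approach the text intends (the paper itself leaves this proposition as an exercise, but the label ``slope-contradiction'' and the preceding discussion of slope as the ratio of $A$s to $B$s point to precisely this argument): a $BB$ forces $|m|<1$, an $AA$ forces $|m|>1$, and the two are incompatible. Your care with the non-integer endpoints and the degenerate horizontal and vertical cases is exactly the right level of rigor here.
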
\index{cutting sequence}\index{square torus}

\begin{exercise}
Prove Proposition \ref{slope-contradiction}.
\end{exercise}

\begin{corollary}\index{square torus}
A given cutting sequence on the square torus has blocks of multiple $A$s separated by single $B$s, or blocks of multiple $B$s separated by single $A$s, but not both.
\end{corollary}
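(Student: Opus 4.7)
My plan is to derive the corollary as a direct logical unpacking of Proposition~\ref{slope-contradiction}. That proposition guarantees that in any cutting sequence on the square torus, at least one of the two substrings $AA$ or $BB$ fails to occur. So I would split on which of these two exclusions holds.

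In the first case, suppose $BB$ never appears. Then every $B$ in the sequence is immediately flanked by $A$s, so the $B$s occur as isolated singletons. Reading left to right, the characters between consecutive $B$s form maximal runs of $A$s, which are exactly the ``blocks of $A$s separated by single $B$s'' described in the first alternative. The symmetric case, in which $AA$ never appears, produces the second alternative by swapping the roles of $A$ and $B$.

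The ``but not both'' clause is really just Proposition~\ref{slope-contradiction} restated: having a block of two or more $A$s would require the substring $AA$, and a block of two or more $B$s would require the substring $BB$, so the proposition forbids these from both occurring in the same sequence. I do not expect any genuine obstacle here, since the corollary is a descriptive repackaging of the proposition. The only wrinkle worth flagging is the degenerate alternating sequence $\overline{AB}$ (slope $1$), which contains no multi-letter block of either kind and so fits trivially under either description; this boundary case does not affect the substantive content.
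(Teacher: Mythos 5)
Your proposal is correct and matches the intended argument: the paper states this as an immediate corollary of Proposition~\ref{slope-contradiction} with no written proof, and your case split on which of $AA$ or $BB$ is excluded, plus the observation that ``not both'' is the proposition restated, is exactly the unpacking the paper has in mind. Your remark about the degenerate alternating sequence $\overline{AB}$ is a reasonable clarification of the slightly loose wording and does not change the substance.
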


Sequences of $A$s and $B$s that are cutting sequences on the square torus are a classical object of study and are well understood. The non-periodic ones are called \emph{Sturmian sequences}. We will be able to completely describe all of the cutting sequences on the square torus (Theorem \ref{sturmian}) once we have developed a few more tools.  \index{Sturmian sequence}\index{cutting sequence}\index{square torus}

\section{Revisiting billiards on the square table}\label{revisit}

Our initial motivation for studying the square torus was to understand billiard paths on the square table, which we unfolded in Section \ref{st} to obtain the square torus. We will briefly return to billiards, to use our knowledge about trajectories on the square torus to prove things about billiards on the square billiard table. Remember that we can fold up the square torus into the square billiard table (Figures \ref{fold-table} and \ref{cs-billiard}), so the square torus that corresponds to the billiard table is twice as big (has four times the area).\index{billiards}

\begin{proposition} \label{torus-to-billiard}\index{square torus}\index{billiards}
Consider a periodic trajectory on the square torus, and a billiard path in the same direction on the square billiard table.
\begin{enumerate}[(a)] 
\item If the cutting sequence corresponding to the torus trajectory has period $n$, then the billiard path has period $2n$. 
\item If the cutting sequence corresponding to the trajectory is $\overline{w}$, then the sequence of edges that the billiard ball hits in one period is $ww$.
\end{enumerate}
\end{proposition}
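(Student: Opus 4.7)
The plan is to exploit the $2\times 2$ torus structure: the torus is four reflected copies of the billiard table, so a straight-line trajectory on the torus corresponds to a bouncing billiard trajectory, with each billiard bounce being the image of a crossing of some integer grid line on the torus (horizontal lines giving $A$ bounces, vertical lines giving $B$ bounces). The torus cutting sequence records only the two outer (boundary) grid lines $y = 0 \equiv 2$ and $x = 0 \equiv 2$, and ignores the two interior lines $y = 1$ and $x = 1$ sitting inside the torus.

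For part (a), I would parametrize the slope-$p/q$ trajectory as $\gamma(t) = (tq, tp)$ in the universal cover; its first return on the $2\times 2$ torus occurs at $t = 2$, where it has reached $(2q, 2p)$. In this interval it crosses horizontal lines $y = 1, 2, \ldots, 2p$ once each ($2p$ total) and vertical lines $x = 1, 2, \ldots, 2q$ once each ($2q$ total), giving $2p + 2q$ billiard bounces. Half of the horizontal crossings (those with $y$ even) and half of the vertical crossings (those with $x$ even) are boundary crossings on the torus, contributing $p + q = n$ letters to the cutting sequence (matching Exercise \ref{pplusq}); the other half are interior crossings, invisible to the cutting sequence but still producing billiard bounces. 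So the billiard makes $2n$ bounces per period.

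For part (b), the key symmetry is the identity $\gamma(t+1) = \gamma(t) + (q, p)$: advancing time by $1$ shifts the trajectory by the integer vector $(q, p)$, which permutes integer grid lines among themselves and preserves horizontal/vertical type. Hence the billiard events in $t \in (0, 1]$ are in label-preserving bijection with those in $t \in (1, 2]$, so the full billiard word over one period has the form $v v$ for some length-$n$ word $v$. To identify $v$ with the torus cutting word $w$, I would compare event times: in $(0, 1]$ the billiard sees $A$-crossings at $t = k/p$ and $B$-crossings at $t = j/q$, while in $(0, 2]$ the torus cutting sequence sees boundary $A$-crossings at $t = 2k/p$ and boundary $B$-crossings at $t = 2j/q$. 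The rescaling $t \mapsto t/2$ sends the latter time set to the former and preserves the relative order of $A$'s and $B$'s, so the two label sequences coincide, giving $v = w$ and hence a billiard word equal to $ww$. The main subtlety will be bookkeeping around cyclic shifts, since $\overline{w}$ is well-defined only up to cyclic permutation, and I would also assume the trajectory is generic (avoiding vertex hits) by a small nudge as elsewhere in the paper.
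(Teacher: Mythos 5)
Your proposal is correct and follows essentially the same route as the paper: you identify billiard bounces with crossings of the refined grid (boundary grid lines plus the interior fold lines), observe that the cutting sequence records only half of these crossings in each direction to get period $2n$, and use the dilation-by-$2$ correspondence between the torus grid and the billiard grid to conclude the word is $ww$. Your version simply makes the paper's qualitative argument quantitative, via the explicit parametrization $\gamma(t)=(tq,tp)$, the return time $t=2$, and the translation symmetry $\gamma(t+1)=\gamma(t)+(q,p)$.
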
\index{cutting sequence}

\begin{proof}\index{square torus}
\begin{enumerate}[(a)]
\item We can transform a trajectory on the square torus into a billiard path on the square table by folding the torus vertically and horizontally (the left side of Figure \ref{cs-billiard}). Thus the path on the billiard table hits an edge when the corresponding trajectory on the square torus intersects edge $A$ or $B$, \emph{and} when the trajectory crosses one of the horizontal or vertical ``fold lines.'' So the billiard sequence (the sequence of edges that the billiard path hits) is a cutting sequence in the square grid, where there are lines at each vertical and horizontal integer, and also lines at the half-integers in both directions (the right side of Figure \ref{cs-billiard}). Every time the torus trajectory goes from a horizontal line to the next horizontal line, the billiard trajectory intersects an extra horizontal fold line between. The fold lines are halfway between the grid lines, so the billiard trajectory intersects twice as many horizontal lines as the torus trajectory. The argument for the vertical lines is the same, so the period of the billiard is twice the period of the torus trajectory.

\begin{figure}[!h]
\begin{center}
\includegraphics[width=300pt]{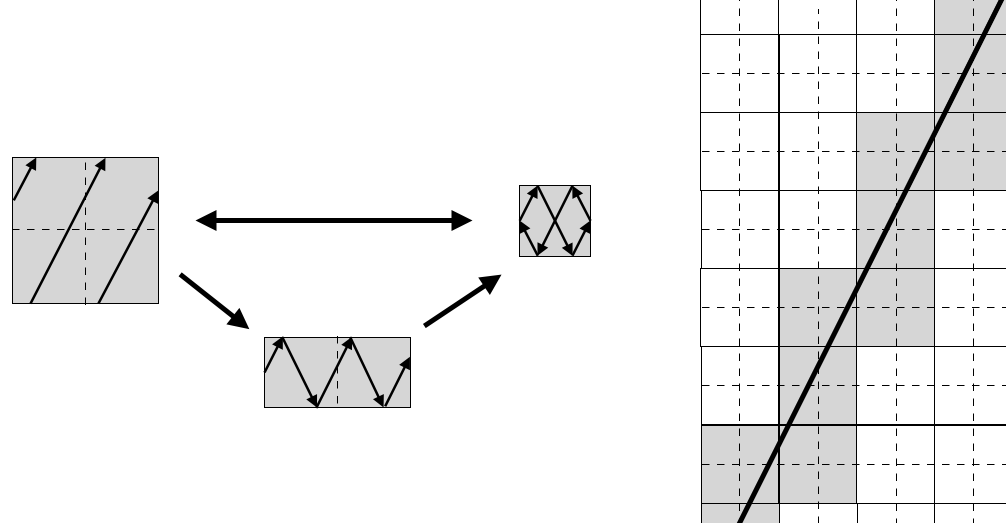} 
\caption{Translating between a trajectory on the square torus, and a billiard path on the square table \label{cs-billiard}}
\end{center}
\end{figure}

\item In fact, the torus trajectory on the integer grid is a dilated copy of the billiard path on the half-integer grid, so in the length of one period of the torus trajectory, the billiard path traverses the same sequence of edges, twice. So if the cutting sequence corresponding to the torus trajectory over one period is $w$, the cutting sequence corresponding to the billiard path of the same length is $ww$. The orientation of the billiard path is reversed after hitting the sequence of edges $w$, so it takes two cycles $ww$ to return to the starting point in the same direction and complete a period.\index{billiards}\index{cutting sequence}\index{square torus}
\end{enumerate}
\end{proof}

Now we can revisit the last three exercises of Section \ref{squaretorus}:

\vspace{.4em}\noindent{\em (Exercise \ref{periodeight}):}
How many billiard paths of period $8$ are there on the square table?

We can rephrase this question as: ``How many cutting sequences of period $4$ are there on the square torus?'' Up to symmetry, there is only one: $\overline{ABBB}$ or $\overline{BAAA}$, which are the same trajectory up to rotation. ($\overline{BABB}$, etc. is just a cyclic permutation of $\overline{ABBB}$, so it corresponds to the same trajectory.) We show the correspondence in Figure \ref{period8}.\index{cutting sequence}\index{square torus}

\begin{figure}[!h]
\begin{center}
\includegraphics[height=80pt]{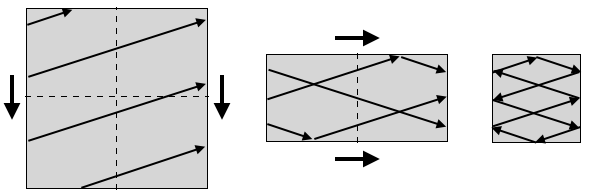} 
\caption{The trajectory $\overline{ABBB}$ can be folded into a billiard path of period $8$, which hits the sequence of edges $\overline{ABBBABBB}$. \label{period8}}
\end{center}
\end{figure}

\vspace{.4em}\noindent{\em (Exercise \ref{periodten}):}
Construct two different periodic billiard paths of period $10$ on a square table. \index{billiards}

Now we can rephrase this question as: ``Construct two different cutting sequences of period $5$ on the square torus.'' We can see that in fact there are only two possible cutting sequences: $\overline{ABBBB}$ and $\overline{ABABB}$, or the same with $A$ and $B$ reversed. ($\overline{AABBB}$ is not a valid cutting sequence, by Proposition \ref{slope-contradiction}.) Figure \ref{period10}a shows the trajectory $\overline{ABBBB}$ on the square torus, which can be folded into a period-$10$ billiard path. Figure \ref{period10}b shows the period-$10$ billiard path $\overline{ABABBABABB}$, which can be unfolded to a trajectory on the square torus (Exercise \ref{per10ex}).\index{cutting sequence}\index{square torus}

\begin{figure}[!h]
\begin{center}
\includegraphics[height=80pt]{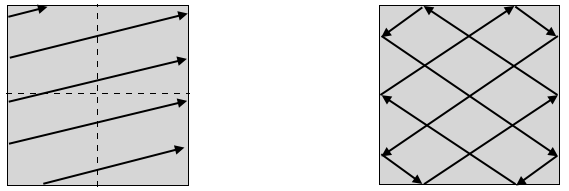} 
\caption{(a) The trajectory $\overline{ABBBB}$ and (b) the billiard path that hits the sequence of edges $\overline{ABABBABABB}$. \label{period10}}
\end{center}
\end{figure}

\begin{exercise} \label{per10ex}\index{square torus}\index{billiards}
Show how to fold the trajectory on the square torus in Figure \ref{period10}a into a billiard path of period $10$, and show how to unfold the billiard path in Figure \ref{period10}b into the trajectory $\overline{ABABB}$ on the square torus.
\end{exercise}

\vspace{.4em}\noindent{\em (Exercise \ref{oddperiod}):}
Construct a periodic billiard path on a square table with an odd period, or show that it is not possible to do so. 
\index{billiards}

By Proposition \ref{torus-to-billiard}a, every billiard path has even period. \\

In fact, the easiest way to go from a periodic trajectory on the square torus, to the billiard path with the same slope, is to simply draw in the negatively-sloped paths right on the square torus picture: Each time the trajectory hits an edge, make it bounce off and connect it to the appropriate intersection on a different edge (and then at the end, scale the picture to make it half as big). Similarly, the easiest way to unfold the billiard trajectory into the trajectory on the square torus is to erase the paths of negative slope (and and the end, make it twice as big).

\newpage
\section{Symmetries of the square torus} \index{symmetry}\label{sq-tor-symm}\index{square torus}

Let's step back a bit and consider symmetries of the square torus. 
We will say that a \emph{symmetry} of the square torus is a transformation that takes nearby points to nearby points, and doesn't overlap or leave any gaps.\footnote{The precise term for this ``symmetry'' is  \emph{automorphism}, a bijective action that takes the surface to itself.}

We will only allow symmetries that take the (single) vertex of the torus to itself, and later on symmetries that take all vertices of a given surface to other vertices of the surface.  \index{vertex}

{\bf Reflection:} Reflecting across vertical, horizontal or diagonal lines of symmetry is a symmetry of the square torus that fixes the vertex. On the curved torus, vertical and horizontal reflections are horizontal and vertical reflections, respectively; the diagonal reflections are more difficult to visualize. 

{\bf Rotation:} Rotating the square torus by a multiple of $\pi/2$ is a symmetry that fixes the vertex. This is  not easy to visualize on the curved torus. \index{symmetry}\index{square torus}

{\bf Twist/shear:} \index{shear} Imagine that we cut the curved torus along a meridian (edge $B$, say), give it a full twist, and then glue it back together (the top part of Figure \ref{shear-twist}). This is a symmetry of the surface. On the square torus, this is a \emph{shear}: We apply the matrix $\upshear$, and then reassemble the resulting parallelogram back into a square (bottom part of Figure \ref{shear-twist}). Similarly, cutting along edge $A$ and doing a full twist corresponds to shearing the square torus by the matrix $\shear$.\index{square torus}

\begin{figure}[!h]
\begin{center}
\includegraphics[width=100pt]{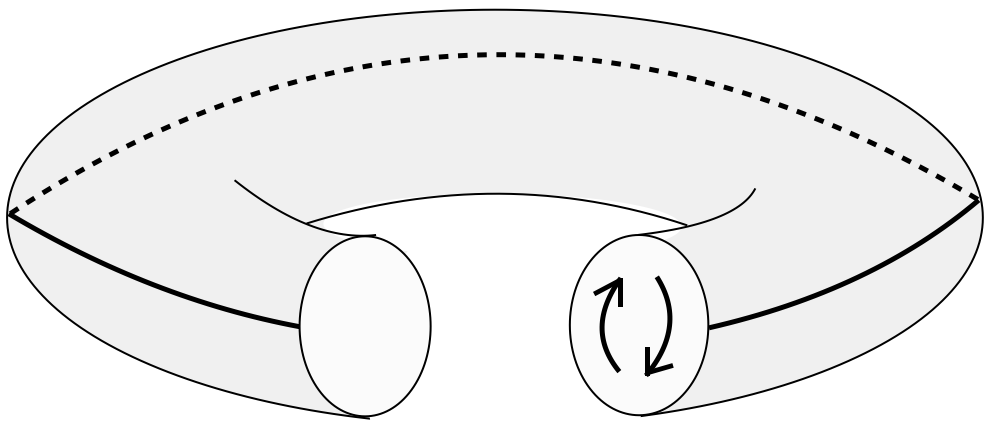} \ \ 
\includegraphics[width=100pt]{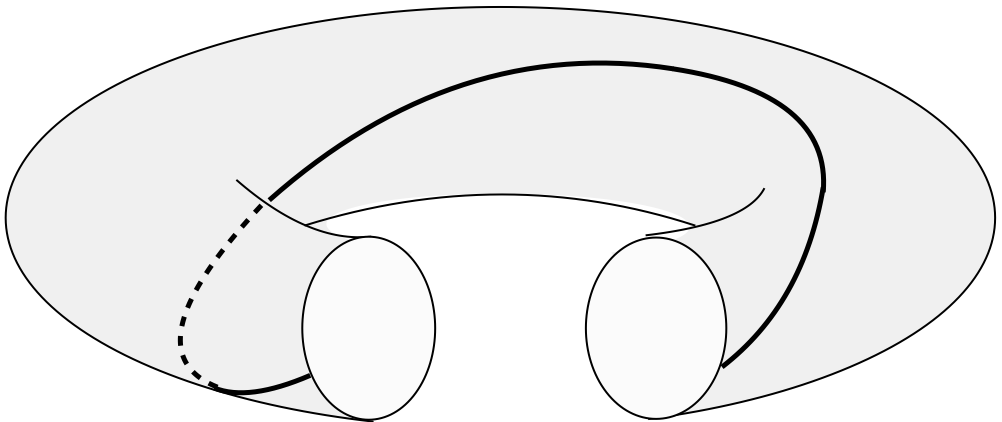} \ \ 
\includegraphics[width=100pt]{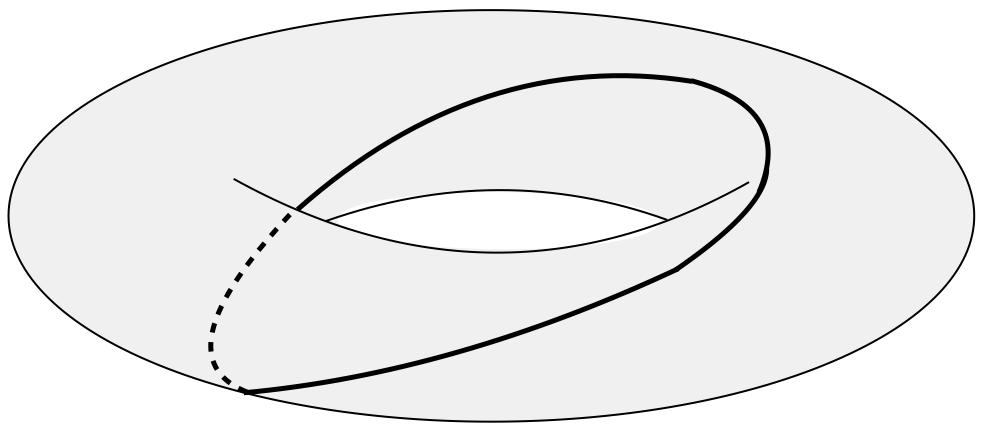} \\
\includegraphics[width=50pt]{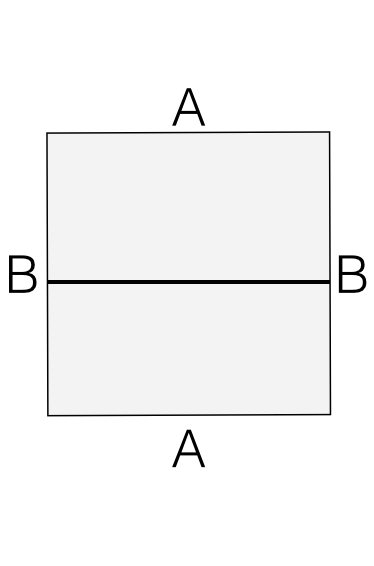} \ \ \ \ \ \ \ \ \ \ \ \ \ \ \ \ \ \ \ \ 
\includegraphics[width=50pt]{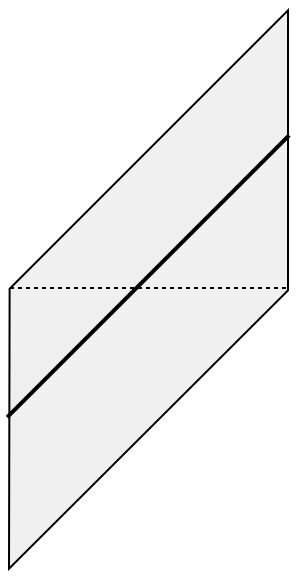} \ \ \ \ \ \ \ \ \ \ \ \ \ \ \ \ \ 
\includegraphics[width=50pt]{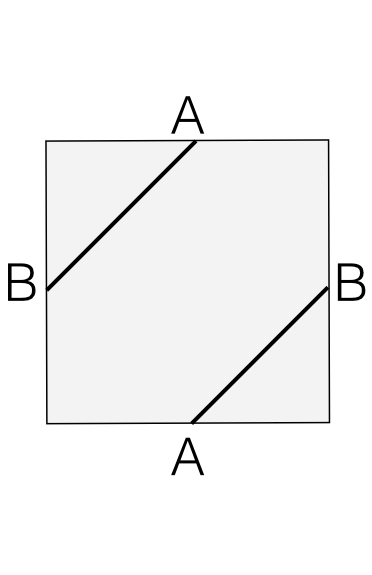} 
\caption{Twisting the 3D torus, shearing the square torus  \label{shear-twist}}
\end{center}
\end{figure}


Given a trajectory on the square torus, we want to know what happens to that trajectory under a symmetry of the surface. We will answer this question by comparing their cutting sequences: the cutting sequence $c(\tau)$ corresponding to the original trajectory $\tau$, and the cutting sequence $c(\tau')$ corresponding to the transformed trajectory $\tau'$.  The effects of rotations and reflections are easy to describe: \index{cutting sequence}\index{square torus}

{\bf Reflection:} Horizontal or vertical reflection sends edge $A$ to $A$ and edge $B$ to $B$. $\tau'$ is different from $\tau$ (Figure \ref{auto-torus}b)  but $c(\tau)$ and $c(\tau')$ are the same. Reflection across the diagonal sends $A$ to $B$ and $B$ to $A$, so $c(\tau')$ is $c(\tau)$ with the $A$s and $B$s reversed (Figure \ref{auto-torus}c).

{\bf Rotation:} Rotation by $\pi/2$ sends $A$ to $B$ and $B$ to $A$, so $c(\tau')$ is $c(\tau)$ with the $A$s and $B$s reversed (Figure \ref{auto-torus}d). Thus, the effect on the cutting sequence is the same as reflection across a diagonal. 
Rotation by $\pi$ preserves the trajectory, since a line is symmetric under rotation by $\pi$. \index{square torus}

\begin{figure}[!h]
\begin{center}
\includegraphics[width=340pt]{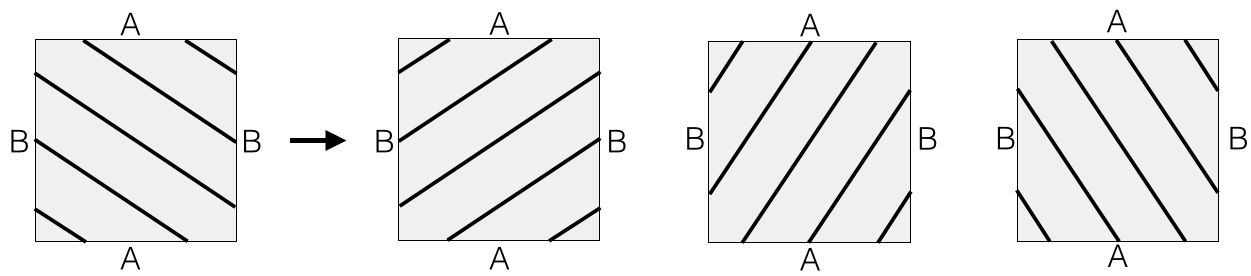} 
\caption{(a) A trajectory of slope $-2/3$ on the square torus, and the result of a (b) horizontal or vertical reflection  (c) reflection across the positive or negative diagonal and (d) $\pi/2$ clockwise or counter-clockwise rotation. \index{symmetry} \label{auto-torus}}
\end{center}
\end{figure}

In Theorem \ref{sturmian} and other future results, we will use the action of reversing $A$s and $B$s, and we will assume that it corresponds to the diagonal flip, as this takes positive slopes to positive slopes.

{\bf Shear:} This is the most interesting case, the one we will study in detail.\footnote{Our exposition here follows the introduction in Smillie and Ulcigrai's paper \cite{SU}.\index{Ulcigrai, Corinna}\index{Smillie, John}} We will examine the effect of the shear $\nupshear$, because we can reduce every other shearing symmetry to repeated applications of this one (Proposition \ref{reducingprop}). 
\index{Smillie, John} \index{Ulcigrai, Corinna} \index{shear} \index{cutting sequence}\index{shear} \index{square torus}

First, we will do an example of shearing via $\nupshear$, and then we will find a general rule for the action.

\begin{example}\index{cutting sequence}
We begin with the cutting sequence $\overline{BAABA}$ (Figure \ref{shear-torus}). We shear it via $\nupshear$, which transforms the square into a parallelogram, and then we reassemble the two triangles back into a square torus, while respecting the edge identifications. The new cutting sequence is $\overline{BAB}$. 
\end{example}\index{square torus}

\begin{figure}[!h]
\captionsetup{singlelinecheck=off}
\begin{center}
\includegraphics[height=160pt]{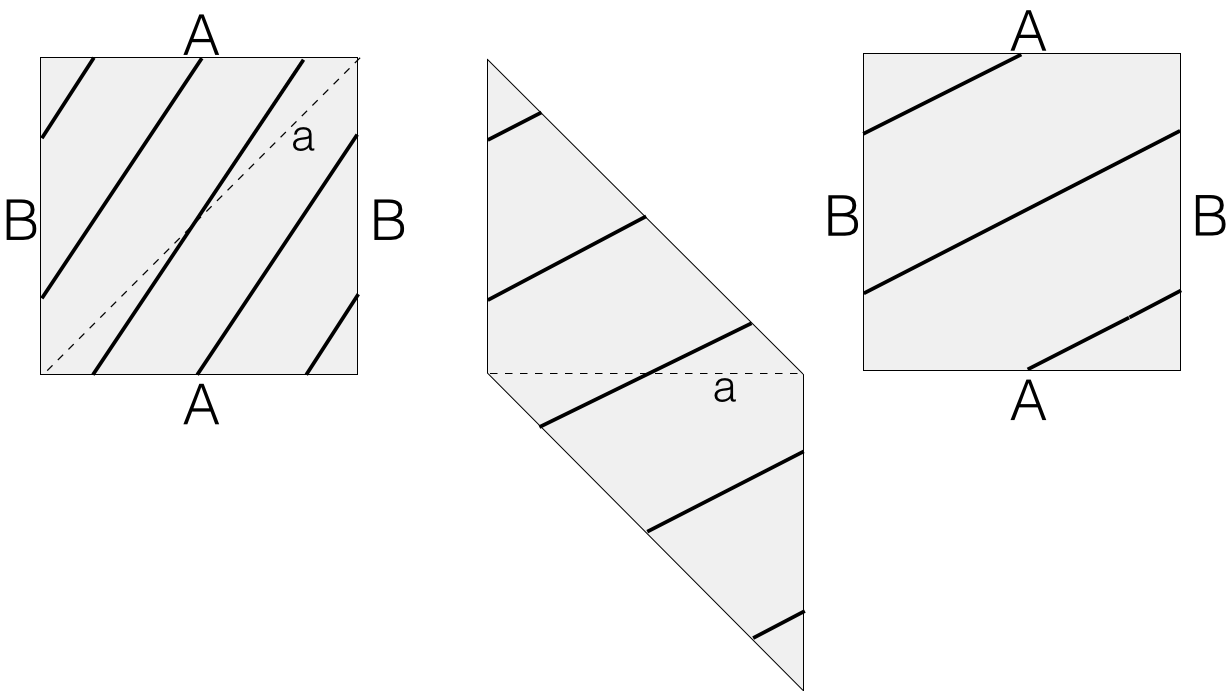} 
\caption{(a) The trajectory with cutting sequence $\overline{BAABA}$ (b) after shearing via $\protect\nupshear\protect$  (c) reassembled back into a square with cutting sequence $\overline{BAB}$. \label{shear-torus}}
\end{center}
\end{figure}

We ask: Is there a rule we can apply to the sequence $c(\tau)=\overline{BAABA}$ to obtain $c(\tau')=\overline{BAB}$, without drawing the shear geometrically? Of course, this one example would fit many different rules. Let's see \emph{why} $\overline{BAABA}$ became $\overline{BAB}$:

The shear $\nupshear$ is vertical, so it does not affect edge $B$: both of the crossings of edge $B$ survived the shear. Where did the $A$ in $\overline{BAB}$ come from? Let's trace it back. We draw it in as a dashed line in each of the pictures, and ``un-shear'' it back (going right to left in Figure \ref{shear-torus}) into the original square, where we can see that it is the positive diagonal. So we see that we get an $A$ in the transformed cutting sequence only when the trajectory crosses this positive diagonal. \index{cutting sequence}


If we restrict our attention to trajectories with slope greater than $1$ (as in this example), we can see that a trajectory crosses the dashed edge when it goes from edge $A$ to edge $A$. So we can ``augment'' our original cutting sequence with crossings of the dashed edge $A$, which we'll call $a$: 
\begin{equation*}
\overline{BAABA} \xrightarrow{\text{augment with $a$}} \overline{BAaABA} \xrightarrow{\text{remove $A$}} \overline{BaB} \xrightarrow{\text{change $a$ to $A$}} \overline{BAB}.
\end{equation*}
If our original sequence is more complicated, we can do a similar process:
\begin{equation*}
\overline{BAAAABAAA} \to \overline{BAaAaAaABAaAaA} \to \overline{BaaaBaa} \to \overline{BAAABAA}.
\end{equation*}
We can now state this result:

\begin{theorem} \label{kosl} \index{cutting sequence}\index{square torus}
Given a trajectory $\tau$ on the square torus with slope greater than $1$, and its corresponding cutting sequence $c(\tau)$, let $\tau'$ be result of applying $\nupshear$ to $\tau$. To obtain $c(\tau')$ from $c(\tau)$, shorten each string of $A$s by $1$.
\end{theorem}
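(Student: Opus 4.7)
The plan is to reinterpret $\nupshear$ as a change of grid in the original coordinates, then perform a local case analysis inside a unit cell. Because $\nupshear$ fixes vertical lines, every vertical grid line is preserved. The pre-image of a horizontal integer line under $\nupshear$ is a slope-$1$ line $y = x + k$, so the cutting sequence $c(\tau')$ read against the standard grid equals the cutting sequence of $\tau$ read against the modified grid in which horizontal lines have been replaced by slope-$1$ lines. Hence every $B$ in $c(\tau)$ survives as a $B$ in $c(\tau')$, and the new $A$s correspond precisely to crossings of $\tau$ with slope-$1$ integer lines.

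The core geometric claim, verified in a single unit cell $[m,m+1] \times [k,k+1]$, is that along a trajectory of slope $s > 1$ the quantity $y - x$ is strictly increasing, so the cell's slope-$1$ diagonal (on which $y - x = k - m$) is crossed if and only if the integer value $k - m$ is attained in the cell. Three local configurations are possible for slope $> 1$: enter-bottom / exit-top (locally $AA$), in which $y - x$ sweeps from below $k - m$ to above and one diagonal crossing occurs; enter-bottom / exit-right (locally $AB$) and enter-left / exit-top (locally $BA$), in both of which $y - x$ stays strictly within an open interval of length one between two consecutive integers, so no crossing occurs; and enter-left / exit-right (locally $BB$), which cannot occur for slope $> 1$. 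Thus the new $A$s in $c(\tau')$ occur exactly in the gaps between adjacent $A$s of $c(\tau)$.

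Combining these two observations gives the algorithm: to produce $c(\tau')$ from $c(\tau)$, keep every $B$, insert one marker in each gap between adjacent $A$s, then delete every old $A$. A maximal run of $n$ consecutive $A$s has $n - 1$ interior gaps and thus contributes $n - 1$ new $A$s, shortening the run by exactly one; the isolated $B$s (which, by the corollary to Proposition~\ref{slope-contradiction}, separate the $A$-runs when the slope exceeds $1$) persist unchanged. The most delicate point is the geometric identification at the start — I would pin it down by explicitly tracking the cut-and-translate reassembly on the parallelogram with vertices $(0,0)$, $(1,-1)$, $(1,0)$, $(0,1)$, checking that the segment $y = 0$ in the parallelogram becomes the top and bottom edges of the reassembled square. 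After that, the remainder is routine case-checking.
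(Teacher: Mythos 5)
Your argument is correct and follows essentially the same route the paper sketches before stating Theorem~\ref{kosl} (the formal write-up being left to Exercise~\ref{pfofcs}): the vertical shear preserves the $B$-crossings, the new edge $A$ pulls back to the positive diagonal, and a slope-greater-than-one trajectory crosses that diagonal exactly once between each adjacent pair of $A$s, so each run of $n$ $A$s contributes $n-1$ new ones. Your monotonicity-of-$y-x$ case analysis in a unit cell is a clean way to nail down the crossing count, and the lattice-preservation viewpoint even lets you bypass the cut-and-reassemble step you flag as delicate.
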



\begin{exercise}\label{pfofcs}
Write down a proof of Theorem \ref{kosl}.
\end{exercise}

\begin{exercise}\index{shear}\index{square torus}
Determine the effect of the shear $\stt 10{-1}1$ on the trajectory of slope $5/2$ with corresponding cutting sequence $\overline{BAABAAA}$, both by shearing the square geometrically, and by applying Theorem \ref{kosl} to the cutting sequence. Check that the results agree.
\end{exercise}

In Proposition \ref{slope-contradiction}, we showed that no valid cutting sequence has $AA$ somewhere and $BB$ somewhere else. In Theorem \ref{kosl}, we showed that, given a valid cutting sequence with multiple $A$s separated by single $B$s, if we delete one $A$ from each block, the resulting sequence is still a valid cutting sequence. If we switch $A$s and $B$s, this is a diagonal flip, so the resulting sequence is still a cutting sequence. Now we are ready to characterize all possible cutting sequences on the square torus: 

\begin{theorem}\label{sturmian}
Given an infinite sequence of $A$s and $B$s, iterate the following process:
\begin{enumerate}
\item If it has $AA$ somewhere and also $BB$ somewhere, reject it; it is not a valid cutting sequence.
\item If it is $\ldots BBBABBB\ldots$ or $\ldots AAABAAA\ldots$, reject it; it is not a valid cutting sequence.
\item If it has multiple $A$s separated by single $B$s, delete an $A$ from each block.
\item If it has multiple $B$s separated by single $A$s, reverse $A$s and $B$s.
\end{enumerate}
If we can perform this process on the sequence forever, and the result is never rejected, then the original sequence is a valid cutting sequence, as are all the sequences following. If the result is eventually rejected, then neither the original sequence, nor any of the sequences following, are valid cutting sequences on the square torus.
\end{theorem}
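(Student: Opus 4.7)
The plan is to show both directions: that every cutting sequence survives the iterative process, and that every sequence that survives the process is a cutting sequence. The key observation is that operations (3) and (4) are exactly the two generating symmetries of the square torus analyzed in Section \ref{sq-tor-symm}: (3) is the shear $\nupshear$ by Theorem \ref{kosl}, and (4) is the diagonal reflection $A\leftrightarrow B$. Together with the continued-fraction flavor of iterating them, this will let us encode each step as a geometric move on a trajectory.

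For the forward direction, assume $S=c(\tau)$ for some trajectory $\tau$. Proposition \ref{slope-contradiction} shows $S$ cannot contain both $AA$ and $BB$, so rule (1) never fires. For rule (2), I would observe that a cutting sequence of a trajectory of slope $p/q$ (with $p,q>0$) contains $A$s and $B$s in positive proportion $p:q$ in every period, so a lone $A$ among otherwise all $B$s (or vice versa) cannot appear. Rule (3) is precisely the effect of the shear $\nupshear$ by Theorem \ref{kosl}, which sends trajectories to trajectories, and rule (4) is the diagonal reflection symmetry, so each application of (3) or (4) takes a cutting sequence to a cutting sequence. By induction, every iterate of $S$ is a valid cutting sequence and is never rejected.

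For the converse, the strategy is to reconstruct a slope for $S$ from its orbit under the process. If the iteration eventually stabilizes at the alternating pattern $\overline{AB}$ (which satisfies none of conditions (1)--(4) nontrivially), then $S$ was obtained from $\overline{AB}$ by undoing a finite sequence of moves: inserting an $A$ into each block (the inverse of (3), which corresponds to the inverse shear $\shear$) and swapping letters (the self-inverse diagonal flip). Applying these geometric inverses to the slope-$1$ trajectory with cutting sequence $\overline{AB}$ produces a trajectory whose cutting sequence must equal $S$. If the process never stabilizes, the infinite record of (3)/(4) moves encodes a continued fraction expansion of a unique irrational $\alpha\in(0,\infty)$, and the same inverse-shear construction at each stage produces a consistent trajectory of slope $\alpha$ whose cutting sequence is $S$.

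The main obstacle will be the infinite, irrational-slope case of the converse: one must verify that the infinite sequence of inverse moves actually determines a single trajectory and not merely a sequence of ever-longer finite approximations that happen to reduce the same way as $S$. The approach is a compactness argument — each finite truncation of the process identifies the trajectory up to a nested region in slope-space and up to a translation in the transverse direction, and these regions shrink to a point as the continued fraction converges. Once convergence and consistency of the inverse-shear construction are established, the resulting trajectory has cutting sequence equal to $S$ by construction, completing the characterization.
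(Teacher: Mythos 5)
The paper itself does not prove Theorem \ref{sturmian}: it explicitly defers the proof to \cite{smith}, \cite{morse}, \cite{series} and \cite{arnoux}, so there is no in-text argument to compare yours against. Judged on its own, your forward direction is essentially right and uses exactly the tools the text provides: Proposition \ref{slope-contradiction} for rule (1), a frequency-of-letters argument for rule (2), Theorem \ref{kosl} for rule (3) (you should note that ``multiple $A$s separated by single $B$s'' forces the slope to exceed $1$, which is the standing hypothesis of Theorem \ref{kosl}), and the diagonal flip for rule (4); induction then shows every derived sequence is again a cutting sequence and is never rejected.

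The converse, however, has a genuine gap at the step you label ``compactness.'' The record of moves (3) and (4) depends only on the \emph{slope} of a trajectory, never on its position: every trajectory of slope $\alpha$ produces a cutting sequence with the identical bi-infinite derivation history. So your nested regions shrink to a point in the slope coordinate (that is just convergence of the continued fraction) but do \emph{not} shrink at all in the transverse coordinate, and your construction therefore produces \emph{some} trajectory of slope $\alpha$, not one whose cutting sequence can be shown to equal the given sequence $S$. Worse, there exist sequences that are infinitely derivable and share the derivation history of slope-$\alpha$ cutting sequences yet are not cutting sequences --- the ``boundary'' sequences the paper alludes to after the theorem, arising from two-sided completions of vertex-hitting trajectories --- so ``same derivation history'' genuinely fails to imply ``is a cutting sequence.'' To close the argument you must work locally: show that for each $n$ the central length-$n$ block of $S$ is realized by an arc of an actual trajectory of the $n$-th approximating slope, pull these arcs back through the inverse shears and flips, check they can be chosen nested, and verify that the limiting object is a line rather than a ray through the vertex (this last point is precisely where rule (2) and its preimages must enter). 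That is the substantive content of the proofs in \cite{series}, \cite{morse} and \cite{SU}, and it is the part your outline leaves unexecuted.
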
\index{square torus}

In the space of bi-infinite sequences of $A$s and $B$s, part of the space is valid cutting sequences, and part of it is sequences of $A$s and $B$s that are \emph{not} cutting sequences. The sequences $\ldots BBBABBB\ldots$ and $\ldots AAABAAA\ldots$, and their predecessors in the chain described above, are on the ``boundary'' between these two parts of the space.\index{cutting sequence}

For a proof of Theorem \ref{sturmian} and further reading, see \cite{smith}, \cite{morse}, \cite{series} and \cite{arnoux}. For an example of this process, see Example \ref{reducing-alltheway}. \\

Another reason to study cutting sequences on the square torus  is that they have very low complexity:  \index{Sturmian sequence}\index{complexity}\index{square torus}

\begin{definition}
The  \emph{complexity function} $p(n)$ on a sequence is the number of different ``words'' of length $n$ in the sequence.
\end{definition}

One way to think about complexity is that there is a ``window'' $n$ letters wide that you slide along the sequence, and you count how many different things appear in the window. 

The highest possible complexity for a sequence of $A$s and $B$s is $p(n)=2^n$, because for each of the $n$ positions in the window, you have 2 choices ($A$ or $B$). Aperiodic cutting sequences on the square torus (Sturmian sequences) have complexity $p(n)=n+1$. Periodic cutting sequences on the square torus have complexity $p(n)=n+1$ for $n<p$ and complexity $p(n)=n$ for $n\geq p$. \index{Sturmian sequence} 

\begin{exercise}
Confirm that the cutting sequence $\overline{ABABB}$ has complexity $p(n)=n+1$ for $n=1,2,3,4$ and complexity $p(n)=n$ for $n\geq 5$. \index{complexity}\index{cutting sequence}
\end{exercise}

The square torus has two edges ($A$ and $B$). Later, we will make translation surfaces with many edges. For an aperiodic cutting sequence on a translation surface with $k$ edges, the complexity  is $(k-1)n+1$. For further reading, see \cite{ferenczi}.\index{square torus}

\section{Continued fractions} \index{continued fraction}\label{contfrac}

We will see that there is a beautiful connection between twists and shears, and the \emph{continued fraction expansion} of the slope of a given trajectory. We will briefly pause our discussion of lines on the torus to introduce continued fractions.

The \emph{continued fraction expansion} 
gives an expanded expression of a given number. To obtain the continued fraction expansion for a number, we do the following method, which is best explained via an example:

\begin{example} \label{cont-frac-ex}
We compute the continued fraction expansion of $15/11$. 

\begin{equation*}\footnotesize
  \frac {15}{11} =1+\frac 4{11}=1+ \frac1{11/4}=
 1+  \cfrac{1}{1
          + 7/4} 
          =
 1+  \cfrac{1}{2
          + 3/4} 
          =
 1+ \cfrac{1}{2
          + \cfrac{1}{4/3 } }           
          =
{\bf 1}+ \cfrac{1}{{\bf 2}
          + \cfrac{1}{{\bf 1}
          + \cfrac{1}{{\bf 3}} } } .                   
\end{equation*}
Since all the numerators are $1$, we can denote the continued fraction expansion more compactly by recording only the bolded numbers. We write $15/11=[1,2,1,3]$. 
\end{example}

We can explain this procedure as an algorithm:

\begin{algorithm}
We begin with the entire number as our ``remainder,'' and iterate the following procedure.
\begin{enumerate}
\item If the remainder is more than $1$, subtract $1$.
\item If the remainder is between $0$ and $1$, take the reciprocal.
\item If the remainder is $0$, stop.
\end{enumerate}
We keep track of the new expression of the number via a \emph{continued fraction}, which we build as we go, as in Example \ref{cont-frac-ex}.
\end{algorithm}

The continued fraction expansion of a rational number terminates, and the continued fraction expansion of an irrational number is infinite. \index{continued fraction}

Geometrically, the continued fraction algorithm for a number $x$ is (Figure \ref{contfrac}):
\begin{enumerate}
\item Begin with a $1\times x$ rectangle (or $p\times q$ if $x=p/q$).
\item Cut off the largest possible square, as many times as possible. Count how many squares you cut off; this is $a_1$.
\item With the remaining rectangle, cut off the largest possible squares; the number of these is $a_2$.
\item Continue until there is no remaining rectangle. The continued fraction expansion of $x$ is then $[a_1,a_2,\ldots]$.
\end{enumerate}

\begin{figure}[!h]
\begin{center}
\includegraphics[width=150pt]{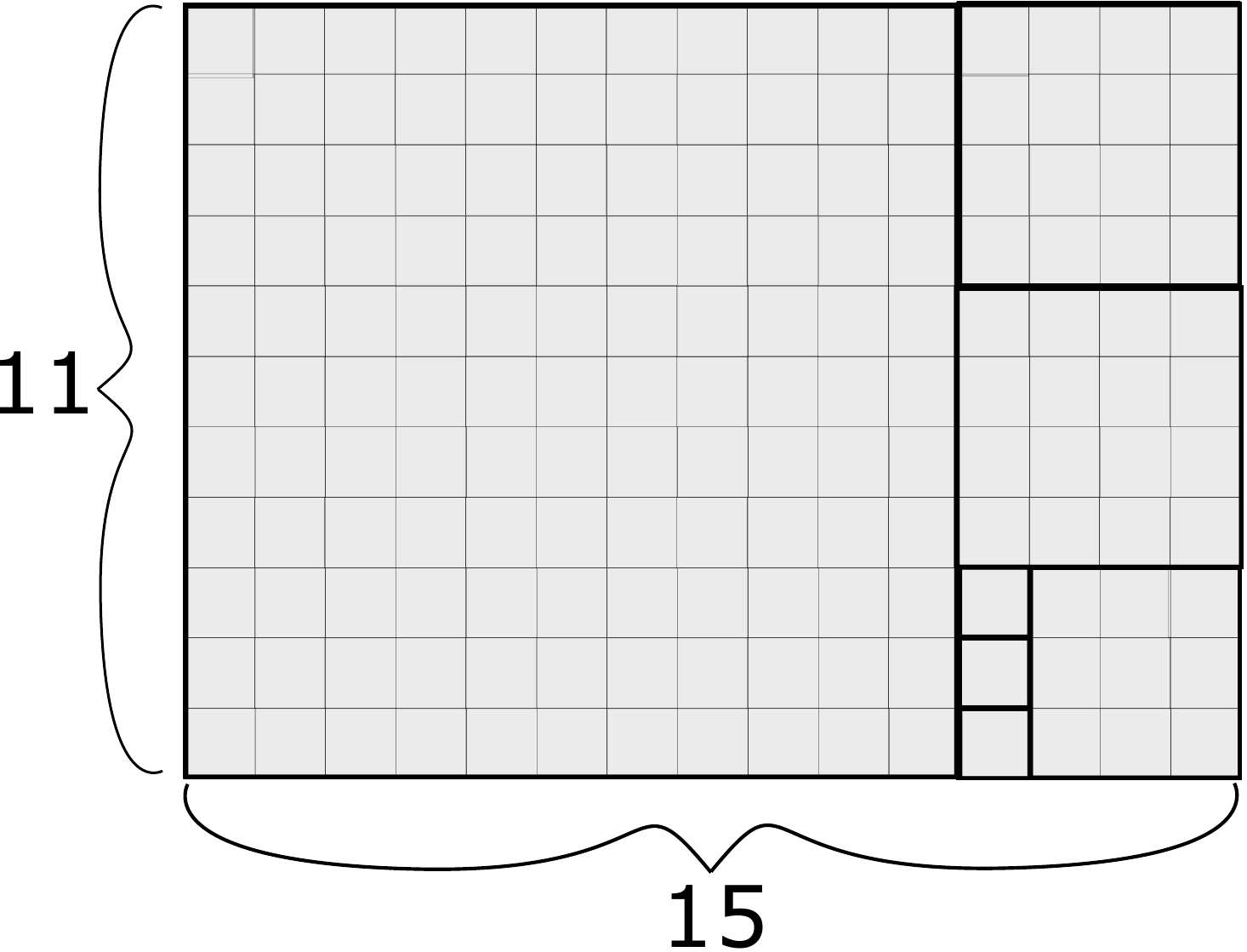} \ \ \ \ \ \ \ \ \ 
\includegraphics[width=170pt]{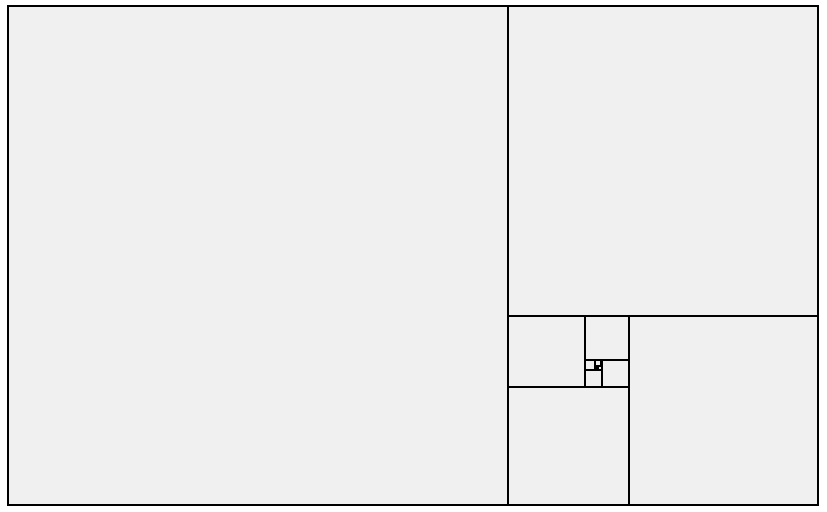}
\caption{A geometric explanation for the continued fraction expansion. (a) For a $15\times 11$ rectangle, the number of squares of each descending size is $1,2,1,3$. (b) For a $\phi \times 1$ rectangle, the number of squares of each descending size is $1,1,1,\ldots$ \label{contfrac}\index{golden ratio}}
\end{center}
\end{figure}

\newpage
  The golden ratio $\phi= (1+\sqrt{5})/2$ \index{golden ratio}  has a particularly elegant continued fraction expansion (Figure \ref{contfrac}b). It satisfies the equation $\phi=1+1/\phi$, so we get:

\begin{equation*}
\phi  = 
1 + \cfrac{1}{1
          + \cfrac{1}{1
          + \cfrac{1}{1 + \cfrac{1}{\ddots} } } } =[1,1,1,1,\ldots].
\end{equation*}

We will see a beautiful connection between the square torus and continued fraction expansions in the next section.\index{continued fraction}

\begin{exercise} \label{findcfe}
Find the continued fraction expansions of $5/7$.  (You will use it in Exercise \ref{usefivesevenths}.)
\end{exercise}

\begin{exercise} \label{grcfe}
Find the continued fraction expansions of $3/2, 5/3, 8/5$, and $13/8$. Describe any patterns you notice, and explain why they occur.
\end{exercise}

\begin{exercise}
Find the first few steps of the continued fraction expansion of $\pi$. Explain why the common approximation $22/7$ is a good choice. Then find the best fraction to use, if you want a fractional approximation for $\pi$ using integers of three digits or less.
\end{exercise}

\begin{exercise}
Find the continued fraction expansion of $\sqrt{2}-1$. Then solve the equation $x=\frac1{2+x}$ and explain how these are related. 
\end{exercise}

\newpage
\section{Continued fractions and cutting sequences}\index{continued fraction}\index{cutting sequence}\label{correspondence}

In this section, we show how to find the cutting sequence corresponding to a trajectory of a given slope, and conversely how to find the continued fraction expansion of the slope of the trajectory from its cutting sequence. To see how it works, let's look at an example. \index{square torus}

\begin{example}\label{reducing-alltheway}
We reduce the trajectory of slope $3/2$ all the way to a horizontal line in Figure \ref{reduce-end}. \index{shear}

\begin{figure}[!h]
\begin{center}
\includegraphics[width=77pt]{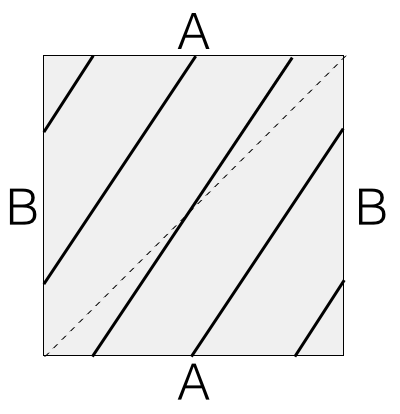} $\stackrel{\text{shear}}{\rightarrow}$
\includegraphics[width=67pt]{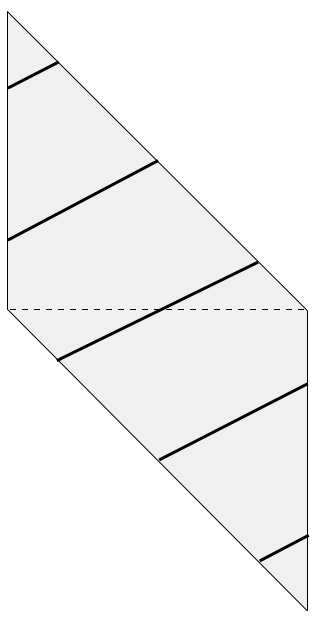} $\stackrel{\text{reassemble}}{\rightarrow}$ 
\includegraphics[width=77pt]{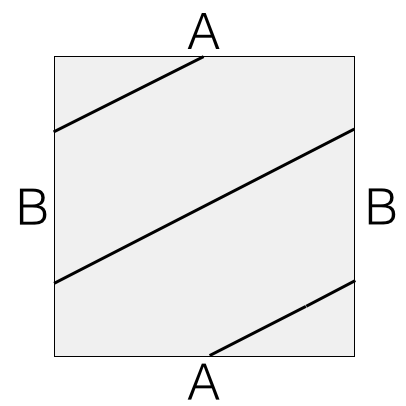} $\stackrel{\text{flip}}{\rightarrow}$ 
\includegraphics[width=77pt]{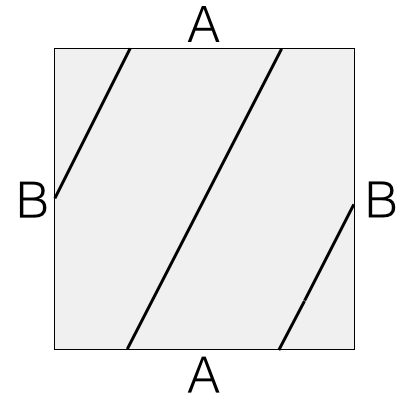} $\stackrel{\text{shear}}{\rightarrow}$
\includegraphics[width=67pt]{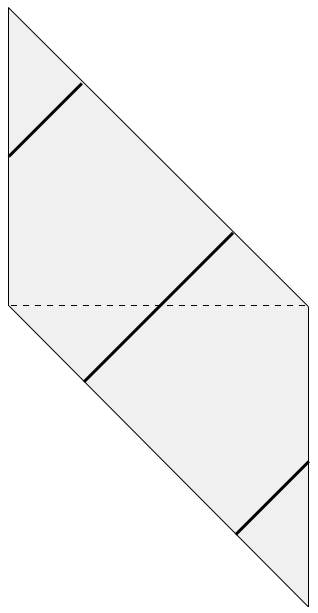} $\stackrel{\text{reassemble}}{\rightarrow}$ 
\includegraphics[width=77pt]{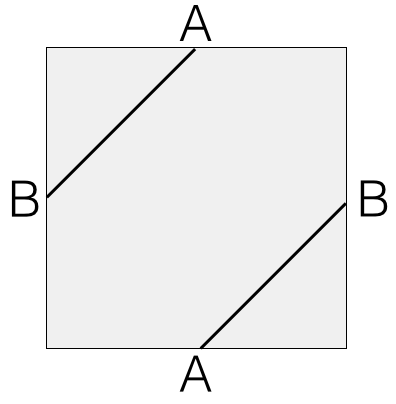} $\stackrel{\text{shear}}{\rightarrow}$ 
\includegraphics[width=67pt]{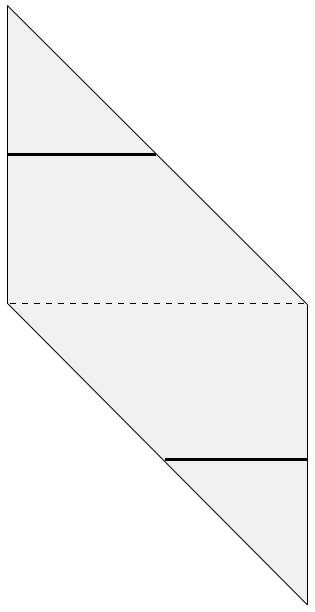} $\stackrel{\text{reassemble}}{\rightarrow}$ 
\includegraphics[width=77pt]{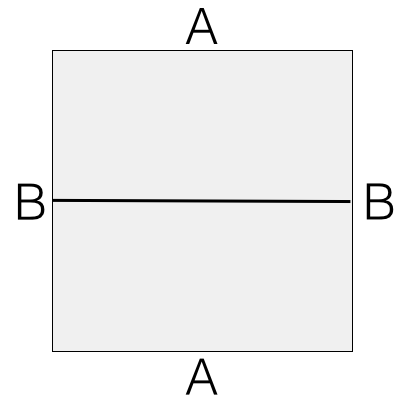} \ \ 
\caption{We apply Algorithm \ref{alg-torus} to a trajectory of slope $3/2$. \label{reduce-end}\index{shear}}
\end{center}
\end{figure}

\end{example}
%
%

We can describe this process with an algorithm:  \index{square torus}

\begin{algorithm} \label{alg-torus}
We iterate this process until it tells us to stop:
\begin{enumerate}
\item If the slope is greater than or equal to $1$, apply $\nupshear$.
\item If the slope is between $0$ and $1$, apply $\stt 0110$.
\item If the slope is $0$, stop.
\end{enumerate}
\end{algorithm}

We can note down the steps we took: shear, flip, shear, shear. We ended with a slope of $0$. The shear subtracts $1$ from the slope, and the flip inverts the slope, so working backwards, the initial slope was:\index{shear}

$$0 \to 1+0 \to 1+1+0 \to \frac{1}{1+1+0} \to 1+ \frac{1}{1+1+0} = 3/2.$$

Furthermore, $1+\frac{1}{2}$ is the continued fraction expansion for $3/2$ (Exercise \ref{grcfe}).

In this section, we'll explore how this process allows us to translate between a cutting sequence, and the continued fraction expansion of the slope of the associated trajectory. As we did in Section \ref{sq-tor-symm}, we will use the cutting sequence corresponding to a trajectory, rather than drawing out the trajectory geometrically like Example \ref{reducing-alltheway} every time.

\vspace{1em}\noindent {\bf Cutting sequence to continued fraction expansion:}\index{continued fraction}\index{cutting sequence}\index{square torus}

\begin{algorithm} \label{reducecs} Given a cutting sequence, perform the following procedure:
\begin{enumerate}
\item If the sequence has multiple $A$s separated by single $B$s, decrease the length of each string of $B$s by $1$.
\item If the sequence has multiple $B$s separated by single $A$s, reverse $A$s and $B$s (change all the $B$s to $A$s and all the $A$s to $B$s). 
\item If the sequence is an infinite string of $B$s, stop.
\end{enumerate}

Since Step $1$ strictly decreases the period, and Step $2$ does not change the period, a cutting sequence with finite period (corresponding to a trajectory with a rational slope; see Exercise \ref{periodic-torus}) will always end with a period of $1$, which is a string of $B$s. If the trajectory has irrational slope, the cutting sequence algorithm continues forever, just like the continued fraction algorithm of its slope.
\end{algorithm}

\begin{example} \label{crossoutstuff}\index{cutting sequence}
We start with the periodic cutting sequence $\overline{BAABABAABABA}$ and perform Algorithm \ref{reducecs} on it.

\begin{align*}
\text{(original sequence)}& {\color{white}\to} \overline{BAABABAABABA} \\
\text{remove an $A$ between every two $B$s} &\to \overline{BABBABB} \\
\text{reverse $A$s and $B$s} &\to \overline{ABAABAA} \\
\text{remove an $A$ between every two $B$s} &\to \overline{ABABA}  \\
\text{remove an $A$ between every two $B$s} &\to \overline{ABB} \\
\text{reverse $A$s and $B$s} &\to \overline{BAA} \\
\text{remove an $A$ between every two $B$s} &\to \overline{BA} \\
\text{remove an $A$ between every two $B$s} &\to \overline{B} \\
\text{The sequence is an infinite string of $B$s, so we stop.}
\end{align*}
We can note down the steps we took: shear, flip, shear, shear, flip, shear, shear.
\end{example}

Notice the following: \index{square torus}
\begin{enumerate}
\item The shear subtracts $1$ from the slope (Exercise \ref{minusone}a).
\item The flip inverts the slope (Exercise \ref{minusone}b).
\item The trajectory corresponding to an infinite string of $B$s has a slope of $0$.
\end{enumerate}

So to obtain the continued fraction expansion of the slope, follow the procedure and note down what the steps are, and then, starting from $0$, we can recreate the continued fraction expansion of the slope by working backwards: \index{continued fraction}

\begin{example}\label{crossoutstuffcont}
For our periodic sequence in Example \ref{crossoutstuff} above, we have 

\begin{center}
shear $\to$ flip $\to$ shear  $\to$ shear  $\to$ flip $\to$ shear $\to$ shear $\to 0$.
\end{center}

So, reading backwards and translating, our instructions are:

\begin{center}
0 $\to$ add 1 $\to$ add 1 $\to$ invert $\to$ add 1 $\to$ add 1 $\to$ invert $\to$ add 1.
\end{center}

So our continued fraction is \index{continued fraction}

\begin{equation*} \label{flipshearcf}
0 \to 1 \to 2 \to \frac 12 \to 1+\frac 12 \to 2+\frac 12 \to \cfrac{1}{2+\cfrac 12} \to 1+\cfrac{1}{2+\cfrac 12}.                   
\end{equation*}
Each of these $8$ fractions is the slope of the trajectory corresponding to the cutting sequences in Example \ref{crossoutstuff}, starting with $\overline{B}$ corresponding to slope $0$ and going up. \index{cutting sequence}
\end{example}

\newpage
\noindent {\bf Continued fraction expansion to cutting sequence:} \index{continued fraction}\index{cutting sequence}

We do the process in Algorithm \ref{reducecs} in reverse: 


\begin{algorithm} \label{cftocs}\index{continued fraction}\index{square torus}
Let the continued fraction expansion of the slope be $[a_1, a_2, \ldots, a_k]$, and do the following:
\begin{itemize}
\item Start with a string of $B$s. (The corresponding trajectory has slope $0$.)
\item Insert $a_k$ $A$s between each pair of $B$s. (The corresponding trajectory now has slope $a_k$.)
\item Reverse $A$s and $B$s. (The corresponding trajectory now has slope $\frac{1}{a_k}$.)
\item Insert $a_{k-1}$ $A$s between each pair of $B$s. (The corresponding trajectory now has slope $a_{k-1}+\cfrac{1}{a_k}$.)
\item Reverse $A$s and $B$s. (The corresponding trajectory now has slope $\cfrac{1}{a_{k-1}+\cfrac{1}{a_k}}$.)
\item Continue this process for $k$ steps, ending by inserting $a_1$ $A$s between each pair of $B$s. This yields the cutting sequence corresponding to the fractional slope $a_1+\cfrac 1{\ddots+\cfrac 1 {a_k}}$.
\end{itemize}
\end{algorithm}

\begin{example} \label{constructcuttseq} \index{continued fraction}\index{cutting sequence}\index{square torus}
We construct the cutting sequence corresponding to a trajectory with slope $7/4 = [1,1,3]$. Below, we show only one period of the periodic sequence. (Remember that it repeats, so when we insert an $A$ between every two $B$s, it may not look like the $A$ is between two $B$s if it is at the beginning or end of the period.)
\begin{align*}
  & &\text{slope} \\
 \text{(original sequence)} & {\color{white}\to \  } \overline{B} & 0 \\
\text{insert $3$ $A$s between each pair of $B$s}  &\to \overline{BAAA} &3 \\
\text{reverse $A$s and $B$s} &\to \overline{ABBB} &1/3 \\
\text{insert an $A$ between each pair of $B$s} &\to \overline{AABABAB} &4/3\\
\text{reverse $A$s and $B$s} &\to \overline{BBABABA} &3/4 \\
\text{nsert an $A$ between each pair of $B$s} & \to \overline{BABAABAABAA} &7/4
\end{align*}
To check these answers, we can count that, for example, $\overline{BABAABAABAA}$ does indeed have $7$ $A$s and $4$ $B$s.
\end{example}

Given a cutting sequence, we can determine the continued fraction expansion of the slope by doing this process in reverse:

\begin{proposition} Consider a trajectory $\tau$ with corresponding cutting sequence $c(\tau)$. When performing Algorithm \ref{reducecs} on $c(\tau)$, let the number of times we decrease the length of each string of $A$s of the original sequence (step 1) be $a_1$, and then after the flip (step 2) let the number of times we decrease the length of each string of $A$s in the resulting sequence (step 1) be $a_2$, and so on until the last step we cross out an $A$ between each pair of $B$s $a_k$ times. \index{square torus}

Then the slope of $\tau$ has continued fraction expansion $[a_k, a_{k-1}, \ldots, a_1]$.
\end{proposition}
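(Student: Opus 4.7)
The plan is to show that Algorithm~\ref{reducecs} acting on $c(\tau)$ executes Gauss's continued fraction algorithm on the slope of $\tau$, so that the counts $a_1,\ldots,a_k$ produced by the algorithm are exactly the partial quotients of the slope. Two ingredients are needed: a dictionary translating each cutting-sequence operation into its effect on the slope, and a matching between the algorithm's case splits and intervals of the slope.

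For the dictionary, Step~1 (shortening each string of $A$s by $1$) realizes the shear $\nupshear$ by Theorem~\ref{kosl}; this shear sends $(x,y)\mapsto(x,y-x)$, taking a line of slope $m$ to one of slope $m-1$ (Exercise~\ref{minusone}(a)). Step~2 (reversing $A$s and $B$s) realizes reflection across $y=x$, as noted in Section~\ref{sq-tor-symm}, and sends slope $m$ to $1/m$ (Exercise~\ref{minusone}(b)). For the case matching, combining the identity slope $=$ (\#A's)$/$(\#B's) per period with the corollary to Proposition~\ref{slope-contradiction} gives: the cutting sequence has multiple $A$s separated by single $B$s iff slope $>1$; it has multiple $B$s separated by single $A$s iff slope $\in(0,1)$; and it is all $B$s iff slope $=0$. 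These are exactly the three cases of the algorithm.

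Putting these together, the algorithm acts on the slope by subtracting $1$ (shear) while the slope exceeds $1$, halting when the slope hits $0$, and taking a reciprocal (flip) whenever the slope lies in $(0,1)$. Writing $s_0$ for the initial slope and, recursively, $s_i = 1/(s_{i-1}-\lfloor s_{i-1}\rfloor)$, the $i$-th batch of Step~1 applications consists of exactly $a_i=\lfloor s_{i-1}\rfloor$ shears. This recursion is Gauss's algorithm for the partial quotients of $s_0$, so $(a_1,\ldots,a_k)$ is the sequence of partial quotients of slope$(\tau)$ in the continued fraction notation of Section~\ref{contfrac}.

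The main obstacle is handling a couple of boundary cases. At slope exactly $1$ the sequence $\overline{AB}$ contains neither $AA$ nor $BB$, so Step~1's stated hypothesis fails on a technicality; one reads Step~1 as applicable whenever the sequence contains an $A$ and every $B$-block has length $1$, equivalently slope $\ge 1$. For rational slope, termination happens inside some shear batch once the slope becomes $0$, so that final batch is not followed by a flip, and its count is $a_k$. The whole argument can be organized as an induction on the period of $c(\tau)$, which strictly decreases by (\#B's per period) with each Step~1 application and is preserved by Step~2.
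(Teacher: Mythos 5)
Your proof is correct, but it takes a genuinely different route from the paper's. The paper disposes of this proposition in one line, by observing that Algorithm \ref{reducecs} is step-by-step the inverse of Algorithm \ref{cftocs} (which builds the cutting sequence from the continued fraction expansion), so the batch counts it reads off must be the partial quotients. You instead give a direct, self-contained verification: you translate each combinatorial move into its effect on the slope (Theorem \ref{kosl} and Exercise \ref{minusone} for the shear and the flip), match the algorithm's case split to the slope ranges $\geq 1$, in $(0,1)$, and $=0$ via the identity $\text{slope}=\#A/\#B$ together with Proposition \ref{slope-contradiction} and its corollary, and then recognize the induced dynamics on slopes as exactly the continued fraction algorithm. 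This buys something the one-line proof does not: it avoids presupposing the correctness of the slope annotations in Algorithm \ref{cftocs}, and your treatment of the boundary cases (slope exactly $1$, where neither clause of the algorithm literally applies, and the terminal batch not being followed by a flip) supplies details the paper leaves implicit. One point of friction: your argument concludes that the slope is $[a_1,a_2,\ldots,a_k]$ with the batches indexed in the order performed, whereas the proposition as printed asserts $[a_k,\ldots,a_1]$. Your order is the right one --- in Examples \ref{crossoutstuff}--\ref{crossoutstuffcont} the batch counts are $1,2,2$ in order and the slope is $7/5=[1,2,2]$, not $[2,2,1]=7/3$ --- so the reversal in the statement is an indexing slip, and your proof establishes the corrected statement.
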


\begin{proof}
This follows from the construction in Algorithm \ref{cftocs}, with the indexing on the $a_i$s reversed.
\end{proof}

\begin{exercise}\label{minusone}\index{shear}\index{square torus}
\begin{enumerate}[(a)]
\item Show that applying the shear $\stt 10{-1}1$ to the square torus decreases the slope of a trajectory by $1$.
\item Show that applying the flip $\stt 0110$ to the square torus inverts the slope of a trajectory.
\end{enumerate}
\end{exercise}

\begin{exercise} \index{continued fraction}\index{cutting sequence}\index{square torus}
Find the continued fraction expansion of the slope of the trajectory corresponding to the cutting sequence $\overline{BABBABBABB}$ using only the cutting sequence, as in Examples $\ref{crossoutstuff}-\ref{crossoutstuffcont}$.
\end{exercise}

\begin{exercise}\index{cutting sequence}\label{usefivesevenths}\index{square torus}
Find the cutting sequence corresponding to a trajectory with slope $5/7$, as in Example \ref{constructcuttseq}. You may use the continued fraction expansion of $5/7$ that you found in Exercise \ref{findcfe}. 
\end{exercise}

\section{Every shear can be understood via basic shears}\index{shear}

We gave a simple rule (Theorem \ref{kosl}) for the effect of the shear $\stt 10{-1}1$ on the cutting sequence corresponding to a trajectory whose slope is greater than $1$. We chose to study $\stt 10{-1}1$ because its effect on the slope of the trajectory is easier to state (``decrease slope by $1$'') than our other three options.\footnote{See Exercise \ref{ex-simplest} for the effects of the other basic shears.} We also chose this shear because it is directly related to the continued fraction expansion of the slope, as we explored in the previous section, and because this shear makes the associated cutting sequence simpler (decreases the period of a periodic sequence). Now that we understand this one shear, we can use our result to determine the effect of other shears on trajectories. First, we determine the effects of the two basic shears.\index{square torus}

Recall that the effect of $\nupshear$ on a trajectory on the square torus with slope greater than $1$, with respect to the associated cutting sequence, is to delete an $A$ from each string of $A$s (Theorem \ref{kosl}). The effects of the shears $\shear$ and $\upshear$, which we call ``basic shears,'' are similar:

\begin{proposition}\label{basic-effects}\index{cutting sequence}\index{shear}\index{square torus}
The effects of applying the basic shears $\shear$ and $\upshear$ to a linear trajectory on the square torus, with respect to the effect on the associated cutting sequence, are:
\begin{enumerate}[(a)]
\item $\upshear$: Lengthen every string of $A$s by $1$.
\item $\shear$ Lengthen every string of $B$s by $1$.
\end{enumerate}
Note that a ``string of $A$s" or a ``string of $B$s'' can have length $0$.
\end{proposition}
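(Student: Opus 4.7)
The plan is to derive both claims directly from Theorem \ref{kosl} via two algebraic identities among $2\times 2$ matrices. Throughout, I would assume that $\tau$ has positive slope; the boundary cases (slope $0$, slope $\infty$, and any convention on negative slopes) can be checked separately by direct inspection of one- or two-letter cutting sequences such as $\overline{B}$, $\overline{A}$, and $\overline{BA}$.

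For part (a), the key observation is that $\upshear$ and $\nupshear$ are mutually inverse matrices, since $\nupshear \cdot \upshear = I$. Given a positively-sloped $\tau$, I set $\tau' = \upshear \cdot \tau$; then $\tau'$ has slope greater than $1$ and $\tau = \nupshear \cdot \tau'$. Applying Theorem \ref{kosl} to $\tau'$, the sequence $c(\tau)$ is obtained from $c(\tau')$ by shortening each string of $A$s by $1$. Reading this statement backwards, $c(\tau')$ is obtained from $c(\tau)$ by \emph{lengthening} each $A$-string by $1$, which is precisely claim (a).

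For part (b), I would exploit the conjugation identity $\shear = D\,\upshear\, D$, where $D = \stt 0110$ is the diagonal reflection; this identity is verified by a short $2\times 2$ matrix multiplication. From Section \ref{sq-tor-symm}, the effect of $D$ on a cutting sequence is to interchange the letters $A$ and $B$. Hence the action of $\shear$ on $c(\tau)$ factors as three steps: swap $A \leftrightarrow B$, lengthen each $A$-string by $1$ (by part (a)), and swap $A \leftrightarrow B$ back. The outer swaps conjugate the operation ``lengthen each $A$-string'' into ``lengthen each $B$-string,'' which gives (b).

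The main obstacle is pinning down the precise meaning of a ``string of $A$s of length $0$.'' Theorem \ref{kosl} can shrink a length-$1$ $A$-string to length $0$, losing the positional information of where that $A$ used to sit, so inverting it requires a convention for where the new $A$s get inserted. The right convention is that a cutting sequence with $q$ letter-$B$s per period contains exactly $q$ maximal $A$-strings per period, one in each gap between consecutive $B$s (counting the gap that wraps across the period), possibly of length $0$. With this convention, lengthening each $A$-string by $1$ adds $q$ new letters per period, changing the slope from $p/q$ to $(p+q)/q$, which matches the slope action of $\upshear$ and confirms that the inversion is well-defined. Once this bookkeeping is in place, both parts follow cleanly from the two matrix identities above.
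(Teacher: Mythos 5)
Your proof is correct, and for part (a) it is the same argument the paper gives: $\upshear$ is the inverse of $\nupshear$, so its combinatorial effect is the inverse of the rule in Theorem \ref{kosl}. Your added care about where the new $A$s are inserted --- one per gap between consecutive $B$s, so that $q$ letters are added per period and the slope goes from $p/q$ to $(p+q)/q$ --- is a worthwhile precision that the paper leaves implicit in the remark that a string may have length $0$. For part (b) the two arguments diverge slightly in flavor. The paper asserts the same symmetry you use (``$\shear$ is a horizontal action instead of a vertical action, so the roles of $A$ and $B$ are reversed''), but then backs it up with a self-contained geometric verification: since $\shear$ preserves the crossings of edge $A$, one traces the new edge $B$ back through the shear to the negative diagonal of the original square, observes that every positively-sloped segment crosses that diagonal exactly once, and concludes that a new $B$ is interleaved between every pair of consecutive letters. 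You instead make the symmetry precise algebraically via the conjugation $\shear = \stt 0110 \upshear \stt 0110$ and the fact (from Section \ref{sq-tor-symm}) that the diagonal flip swaps $A$ and $B$ in cutting sequences; this reduces (b) entirely to (a) with no new geometry. Your route is cleaner as a deduction, while the paper's trace-back argument has the advantage of being independent of (a) and of illustrating the ``augment, delete, relabel'' mechanism that recurs throughout the text. One small point to keep in mind with the conjugation: the diagonal flip sends slope $s$ to $1/s$, so you should confirm that the intermediate trajectory still has positive slope before invoking (a), which it does for $s>0$.
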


\begin{proof}
\begin{enumerate}[(a)]
\item $\upshear$ is the inverse of $\stt 10{-1}1$, so the action is also the inverse.
\item $\shear$ is a horizontal action instead of a vertical action, so the roles of $A$ and $B$ are reversed.
\end{enumerate}

For completeness, we demonstrate (b) explicitly: Since $\shear$ is a horizontal shear, the intersections on edge $A$ are preserved, and we must trace back the new edge $B$ to see where it comes from (Figure \ref{favorite-shear}). We see that we get a new $B$ exactly when the original trajectory crosses the negative diagonal. Every segment with positive slope crosses the negative diagonal, so we get a $b$ between every pair of letters. When we cross out the $B$s and change the $b$s to $B$s, the effect is to lengthen each string of $B$s by $1$.
\end{proof}

\begin{example} \index{cutting sequence}
We can see the action of $\shear$ on an example trajectory (Figure \ref{favorite-shear}):

\begin{figure}[!h]
\begin{center}
\includegraphics[height=80pt]{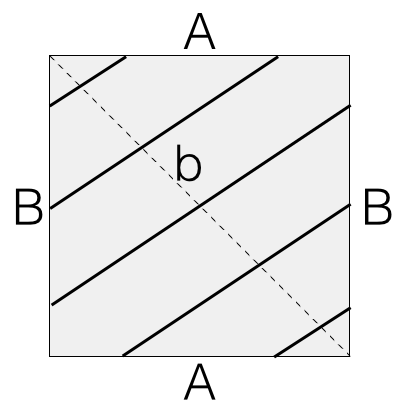} \ \ \ \ \
\includegraphics[height=70pt]{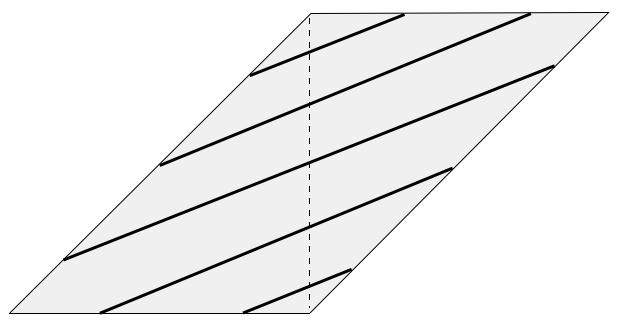} \ \ \ \ \
\includegraphics[height=80pt]{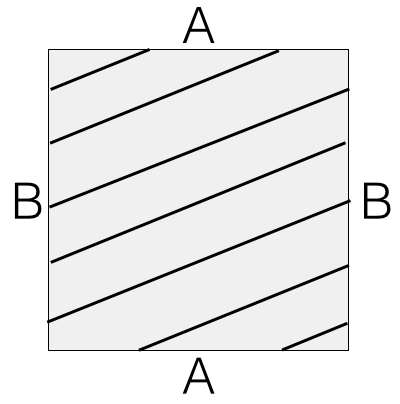} 
\caption{We horizontally shear the trajectory with cutting sequence $\overline{ABABB}$, obtaining the trajectory with cutting sequence $ \overline{ABBABBB}$ \label{favorite-shear}}
\end{center}
\end{figure}

The combinatorial action on the cutting sequence is:
\begin{equation*}
\overline{ABABB} \to \overline{AbBbAbBbBb} \to \overline{AbbAbbb} \to \overline{ABBABBB},
\end{equation*}
so indeed we have lengthened each string of $B$s by $1$.

\end{example}

Now, we can reduce every shear to a composition of these two:


\begin{proposition} \label{reducingprop}\index{shear}\index{determinant}
Every $2\times 2$ matrix with nonnegative integer entries and determinant $1$ is a product of powers of the basic shears $\shear$ and $\upshear$.
\end{proposition}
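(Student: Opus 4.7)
The plan is a proof by strong induction on the entry sum $s(M) = a+b+c+d$ of $M = \stt{a}{b}{c}{d}$. The base case is $M = I$, which is the empty product and has $s = 2$. For the inductive step, I would show that whenever $M \neq I$, one can factor off a basic shear on the right, producing a matrix $M'$ that again has nonnegative integer entries and determinant $1$ but strictly smaller entry sum; then the induction hypothesis expresses $M'$ as a product of powers of basic shears, and multiplying back by that shear gives the same for $M$.

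The mechanism uses the inverse of a basic shear on the right:
\[
M \shear^{-1} = \stt{a}{b-a}{c}{d-c}, \qquad M \upshear^{-1} = \stt{a-b}{b}{c-d}{d}.
\]
I would split into cases based on the signs of $a-b$ and $c-d$. If $a \ge b$ and $c \ge d$, then $M\upshear^{-1}$ has nonnegative entries; if $a \le b$ and $c \le d$, then $M\shear^{-1}$ does. The main obstacle, and really the only nontrivial content of the argument, is ruling out the two \emph{mixed} cases. If both $a > b$ and $d > c$, then $ad \ge (b+1)(c+1) = bc + b + c + 1$; combined with $ad - bc = 1$ this forces $b = c = 0$ and hence $a = d = 1$, i.e.\ $M = I$. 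The symmetric mixed case ($b > a$ and $c > d$) gives $bc \ge ad + a + d + 1$, i.e.\ $ad - bc \le -1$, contradicting $\det M = 1$ directly.

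Finally, one checks that each reduction strictly decreases $s(M)$: case (i) lowers the sum by $b + d$, and if $b = d = 0$ the second column of $M$ would vanish, killing the determinant; case (ii) is symmetric. So the induction must terminate at $I$, exhibiting $M$ as a product of basic shears with nonnegative exponents. In spirit this is nothing but the Euclidean / continued-fraction algorithm of Section \ref{contfrac}, run on the columns of $M$ rather than on the numerator and denominator of a rational slope, which is the link between shears and continued fractions the chapter has been building toward.
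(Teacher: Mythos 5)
Your proof is correct and is essentially the paper's argument: both peel off inverse basic shears Euclidean-algorithm style, using the determinant-$1$ condition to rule out the mixed sign cases and guarantee the entries stay nonnegative while strictly decreasing. The only differences are cosmetic — you factor on the right (comparing $a$ with $b$ and $c$ with $d$) where the paper factors on the left (comparing rows, via its Lemma \ref{reducinglemma}), and your explicit induction on the entry sum with the observation that a vanishing column kills the determinant is in fact a slightly tighter termination argument than the paper's.
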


We only consider matrices with integer entries and determinant $1$ because they are the symmetries of the square torus: determinant $1$ ensures that they preserve area, which is essential if we want to reassemble the sheared torus back into a square with no overlaps and no gaps, and integer entries ensure that vertices of the square are sent to vertices of the square grid.\index{symmetry}

\begin{example} \label{composed-shear}
$\left[\begin{smallmatrix} 3&7 \\ 2&5 \end{smallmatrix}\right] = \shear \upshear^2 \shear^2$.
\end{example}

To prove Proposition \ref{reducingprop}, we need a simple lemma.

\begin{lemma} \label{reducinglemma}\index{determinant}
For a matrix $\stt abcd$ with determinant $1$ and $a,b,c,d\geq 0$, either \mbox{$a\leq c$} and \mbox{$b\leq d$}, or \mbox{$a\geq c$} and \mbox{$b\geq d$}.
\end{lemma}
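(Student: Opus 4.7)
The plan is to argue by contrapositive: assuming that neither pair of inequalities holds, we derive a contradiction with $ad - bc = 1$ (except for one trivial edge case). Negating the disjunction yields exactly two genuine sub-cases, namely (I) $a < c$ and $b > d$, or (II) $a > c$ and $b < d$, both strict; mixed cases like ``$a < c$ and $a > c$'' are immediately impossible.

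The main computational tool I would use is the identity
\[
(a - c)\, d \;+\; (d - b)\, c \;=\; ad - bc \;=\; 1,
\]
which rewrites the determinant as a sum of two products whose signs are controlled by our case assumption. In case (I), both $(a - c)$ and $(d - b)$ are strictly negative while $c, d \geq 0$, so the left-hand side is $\leq 0$, contradicting its value of $1$. In case (II), using that the entries are nonnegative integers, both $(a - c) \geq 1$ and $(d - b) \geq 1$, so the left-hand side is $\geq c + d$. Hence $c + d \leq 1$, leaving only the sub-cases $(c, d) \in \{(0,0), (0,1), (1,0)\}$: the first gives $\det = 0$, the third forces $b = -1$, and the second forces $a = 1$ and $b = 0$, so the matrix is the identity $\stt 1001$.

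Thus, strictly speaking, the identity matrix is the sole exception to the lemma as written. This is harmless for the application in Proposition \ref{reducingprop}, since the identity is already the empty product of $\shear$ and $\upshear$ and forms the trivial base case of the induction. I expect the main subtlety to be bookkeeping rather than insight: handling the degenerate cases $c = 0$ or $d = 0$ in (I) cleanly, and invoking integrality at exactly the right moment in (II) to reduce the determinant equation to a small, finite list of sub-cases.
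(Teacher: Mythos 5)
Your proof is correct, and it takes a genuinely cleaner route than the paper's. The paper enumerates sign-cases by hand (treating $a=c$ with $b=d$, $b>d$, $b<d$, then $a>c$ or $a<c$ with $b=d$, and finally the two strict mixed cases) and argues each by substituting one entry for another inside $ad-bc$; you instead negate the disjunction down to the two strict cases (I) $a<c$, $b>d$ and (II) $a>c$, $b<d$, and dispatch both with the single identity $(a-c)d+(d-b)c=ad-bc=1$, invoking integrality only where it is actually needed, in case (II). What your approach buys is not just economy: it exposes a genuine slip in the paper. The lemma as stated is false for the identity matrix $\stt 1001$, which has $a>c$ but $b<d$, so neither disjunct holds. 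Correspondingly, the paper's treatment of case (II) is invalid: from $ad-bc=1$ and $a>c$, $d\geq 1$ one does get $cd-bc<1$, but the next step $cd-bc<1\implies cd-dc<0$ does not follow; since $b<d$ one has $cd-bc\geq cd-dc=0$, so the correct conclusion is $0\leq cd-bc<1$, hence $c(d-b)=0$, hence $c=0$, $a=d=1$, $b=0$ --- exactly the exception you isolated. (The paper's case (I) chain also has its inequality signs reversed, though there the contradiction is real, as your sign argument shows.)

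As you observe, the exception is harmless for Proposition \ref{reducingprop}: the identity is the empty product of $\shear$ and $\upshear$, and the reduction there halts once an off-diagonal entry vanishes. Still, the lemma should be restated to exclude the identity (or weakened to ``either $a\leq c$ and $b\leq d$, or $a\geq c$ and $b\geq d$, or $\stt abcd$ is the identity''), and your computation is exactly the proof of that corrected statement.
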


\begin{proof}
First, notice that if $a=c$ and $b=d$, then the determinant is $0$, which is impossible. Also, if $a=c$ and $b>d$, or if $a<c$ and $b=d$, then the determinant is negative, which is impossible.

Next, notice that if $a> c$ and $b=d$, then the matrix must be of the form $\stt {c+1}1c1$, which satisfies the statement.
Similarly, if $a=c$ and $b<d$, then the matrix must be of the form $\stt 1b1{b+1}$, which satisfies the statement.

Now if $a>c$ and $b<d$, then $$ad-bc=1 \implies cd-bc <1 \implies cd-dc<0,$$ a contradiction.

Finally, if $a<c$ and $b>d$, then $$ad-bc=1 \implies ad-ba<1 \implies ab-ba<0,$$ a contradiction.
%
%
%
%
%
\end{proof}

\begin{proof}[Proof of Proposition \ref{reducingprop}]\index{determinant}
First, consider a matrix $\stt abcd$ with $a,b,c,d\geq 0$. By Lemma \ref{reducinglemma}, either  \mbox{$a\leq c$} and \mbox{$b\leq d$}, or \mbox{$a\geq c$} and \mbox{$b\geq d$}.

If $a\geq c$ and $b\geq d$, then we apply $\shear^{-1} = \stt 1{-1}01$. The resulting matrix $\stt {a-c}{b-d}cd$ also has determinant $1$.

Similarly, if $a\leq c$ and $b\leq d$, we apply $\upshear^{-1} = \stt 10{-1}1$.  The resulting matrix  $\stt ab{-a+c}{-b+d}$ also has determinant $1$.


We repeat this process, applying $\shear^{-1}$ or $\upshear^{-1}$ depending on whether $a$ and $b$ are greater than or less than $c$ and $d$. Since all the entries are nonnegative, and at least two of them are decreased at every step, eventually at least one entry will be $0$. 
If one of the entries is $0$, then
the matrix is either $\stt 1 n 01 = \shear^n$, or it is $\stt 10n1 = \upshear^n$.
Since applications of $\shear^{-1}$ and $\upshear^{-1}$ to our original matrix eventually yields a power of $\shear$ or $\upshear$, the original matrix must be a composition of $\shear$ and $\upshear$.
%
\end{proof}

\begin{example}\index{cutting sequence}\index{shear}\index{square torus}
Suppose that we have the trajectory of slope $1$, with cutting sequence $\overline{AB}$, on the square torus. We shear the torus via the matrix \mbox{$\stt 3725=\shear \upshear^2 \shear^2$}. Reading from right to left and applying Proposition \ref{basic-effects}, we compute the resulting cutting sequence:

\begin{align*}
  & &\text{slope} \\
 \text{(original sequence)} & {\color{white}\to \  } \overline{AB} & 1 \\
\text{lengthen every string of $B$s by $1$} &\to \overline{ABB} &1/2 \\
\text{lengthen every string of $B$s by $1$} &\to \overline {ABBB} &1/3 \\
\text{lengthen every string of $A$s by $1$} &\to \overline{AABABAB} &4/3 \\
\text{lengthen every string of $A$s by $1$} &\to \overline {AAABAABAAB} &7/3 \\
\text{lengthen every string of $B$s by $1$} &\to \overline{ABABABBABABBABABB} &7/10 \\
\end{align*}

When we shear the square torus by $\nupshear$ and reassemble the pieces back into a square, the result is a square with a cut through the negative diagonal (Figure \ref{shear-torus}). The analogous action by a non-basic shear looks a little more complicated:

\begin{figure}[!h]
\begin{center}
\includegraphics[width=230pt]{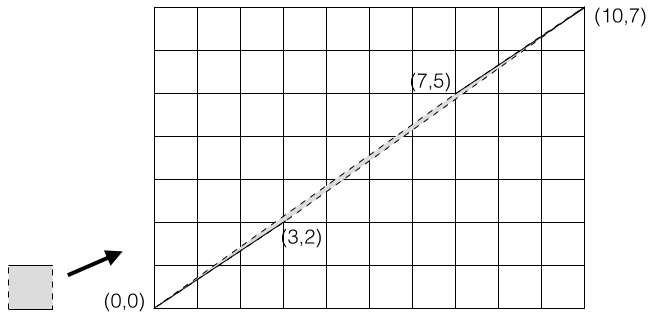} \ \ \ 
\includegraphics[width=110pt]{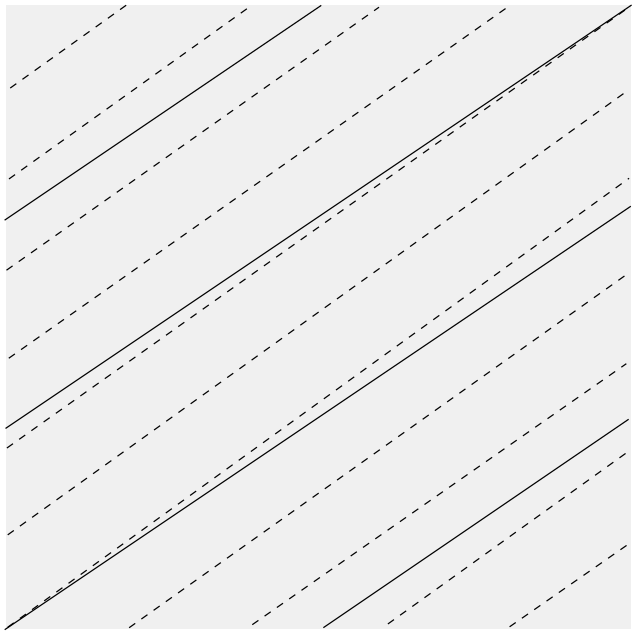} 
\caption{Shearing the square torus via the matrix $\protect\stt 3725\protect$, and reassembling the sheared parallelogram back into a square (enlarged to show detail) \label{bigshear}\index{shear}}
\end{center}
\end{figure}
\end{example}

\begin{exercise}\label{ex-simplest}
For each part below, show that the indicated matrix has the indicated effect on the rational slope of a given trajectory. You may use Exercise \ref{minusone} and Proposition \ref{basic-effects}.
\begin{enumerate}
\item $\stt 10{-1}1:\frac pq \mapsto \frac{p-q}q$.
\item $\upshear: \frac pq \mapsto \frac{p+q}q$.
\item $\shear: \frac pq \mapsto \frac p{p+q}$.
\item $\stt 1{-1}01: \frac pq \mapsto \frac p{p-q}$.
\end{enumerate}
\end{exercise}

\begin{exercise}
Make up a $2\times 2$ matrix with nonnegative integer entries and determinant $1$. Then write it as a composition of $\shear$ and $\upshear$, as in Example \ref{composed-shear}. You may want to use the strategy in the proof of Proposition \ref{reducingprop}.
\end{exercise}

\section{Polygon identification surfaces}


In Section \ref{squaretorus}, we identified the top and bottom edges, and the left and right edges, of a square, and obtained a surface: the square torus. Similarly, we can identify opposite parallel edges of a parallelogram, or a hexagon, or an octagon, or two regular pentagons, or many other polygons, to form a surface.


We determined that when we identify opposite parallel edges of a square, the resulting surface is a torus (Figure \ref{curved-torus}). By the same construction, identifying opposite parallel edges of a parallelogram also results in the torus (Figure \ref{par-torus}).  \index{parallelogram} \index{torus}

\begin{figure}[!h]
\begin{center}
\includegraphics[width=60pt]{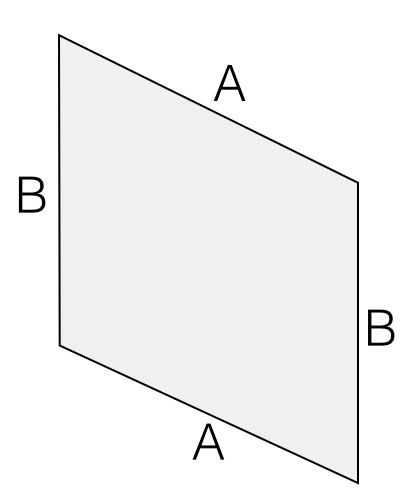} \ \ \ \ \
\includegraphics[width=120pt]{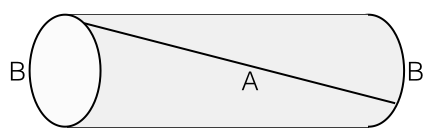}\ \ \ \ \
\includegraphics[width=123pt]{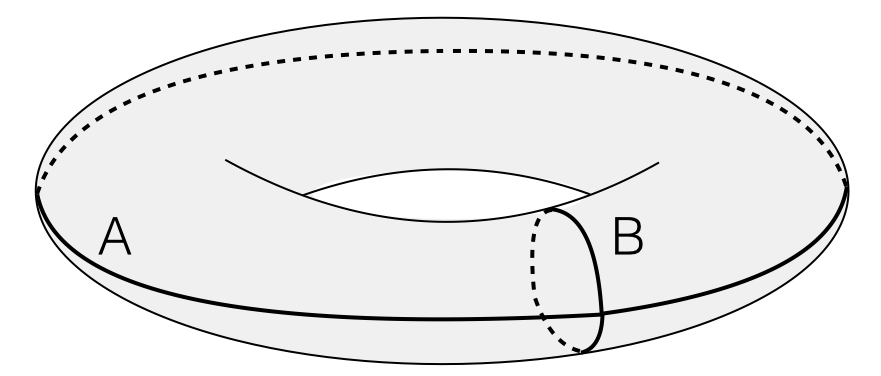}
\caption{Wrapping a parallelogram into a torus \label{par-torus}}
\end{center}
\end{figure}

It turns out that the surface we get by identifying opposite parallel edges of a hexagon is also the torus.  \index{hexagon} Any hexagon with three pairs of opposite parallel edges, identified cyclically, tiles the plane (Figure \ref{hex-tiling}a). Choose a point in the hexagon, and mark that point in each hexagon (Figure \ref{hex-tiling}b). Then we can connect these points to form parallelograms. When we cut out just one such parallelogram, opposite parallel edges are at corresponding locations in the original hexagons, so opposite parallel edges of the parallelogram are identified, so the object is a torus (Figure \ref{hex-tiling}c), and the hexagons are just a different way of cutting up the same surface.

\begin{figure}[!h]
\begin{center}
\includegraphics[width=80pt]{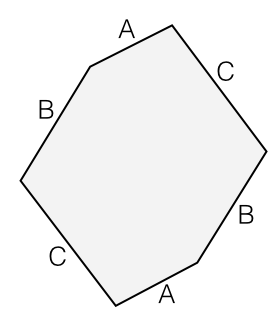} \ \ \ \ \
\includegraphics[width=140pt]{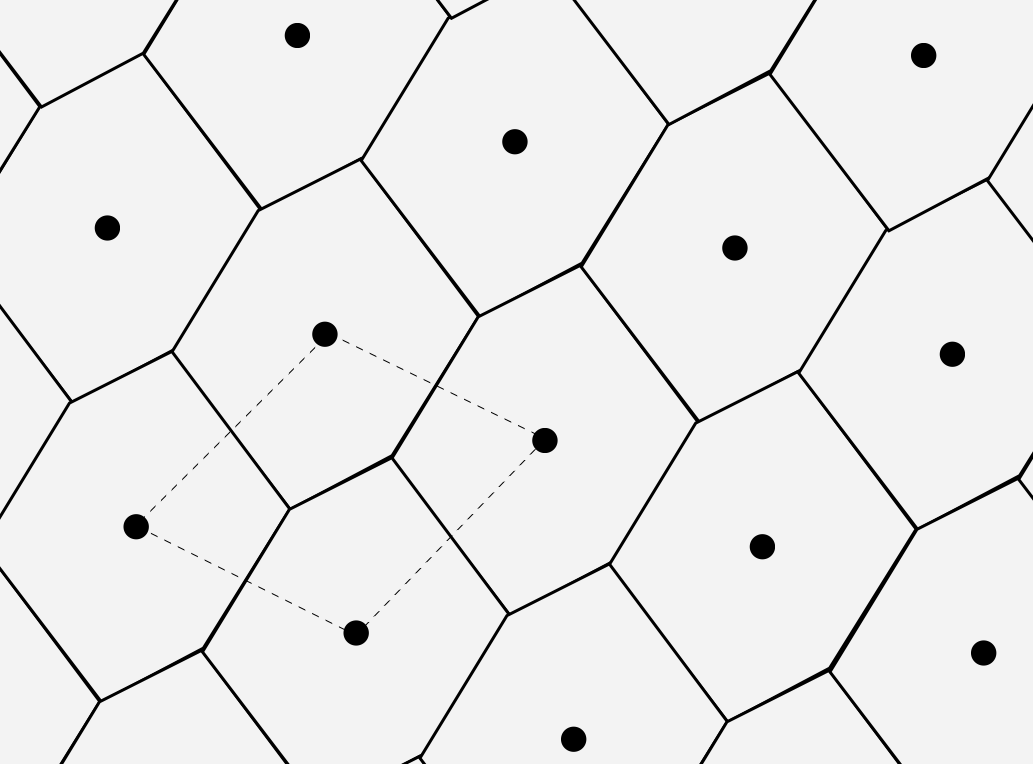}\ \ \ \ \
\includegraphics[width=90pt]{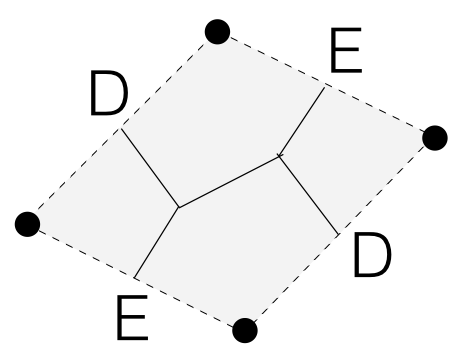}
\caption{Tiling the plane with hexagons \label{hex-tiling}}
\end{center}
\end{figure}

\begin{exercise}\index{hexagon}
In Figure \ref{hex-tiling}, we tiled the plane with a hexagon that has three pairs of opposite parallel edges. Our ``random'' hexagon happened to be convex. Does a non-convex hexagon with three pairs of opposite parallel edges still tile the plane?
\end{exercise}

Identifying \emph{opposite parallel edges} means gluing together two edges that are parallel and \emph{oppositely oriented}. An \emph{oriented edge} is an edge with an arrow pointing out of the edge, perpendicular to the edge and out of the polygon. We identify oppositely oriented edges, because then when you translate the identified edges to the same place, the surfaces are next to each other, rather than on top of each other. This ensures that a trajectory passing into one edge comes out the identified edge going the same direction. 

Figure \ref{not-opp-par} shows a surface where two of the edge identifications ($B$ and $C$) are of opposite parallel edges, and two of the edge identifications ($A$ and $D$) are of non-opposite parallel edges. (The shading on the vertices indicates which corners match up.) The trajectory ends up going two different directions. Edges $B$ and $C$ are identified by translation, while if we want to put edges $A$ and $D$ next to each other, we have to turn one of the squares $180^{\circ}$ or flip one square over onto the other, so the surface in Figure \ref{not-opp-par} is not a translation surface. You can see that if you constructed this surface with paper and tape, it would look like a pillowcase, so to a small creature living on the surface, the vertices look different from the other points on the surface.

\begin{figure}[!h]
\begin{center}
\includegraphics[width=200pt]{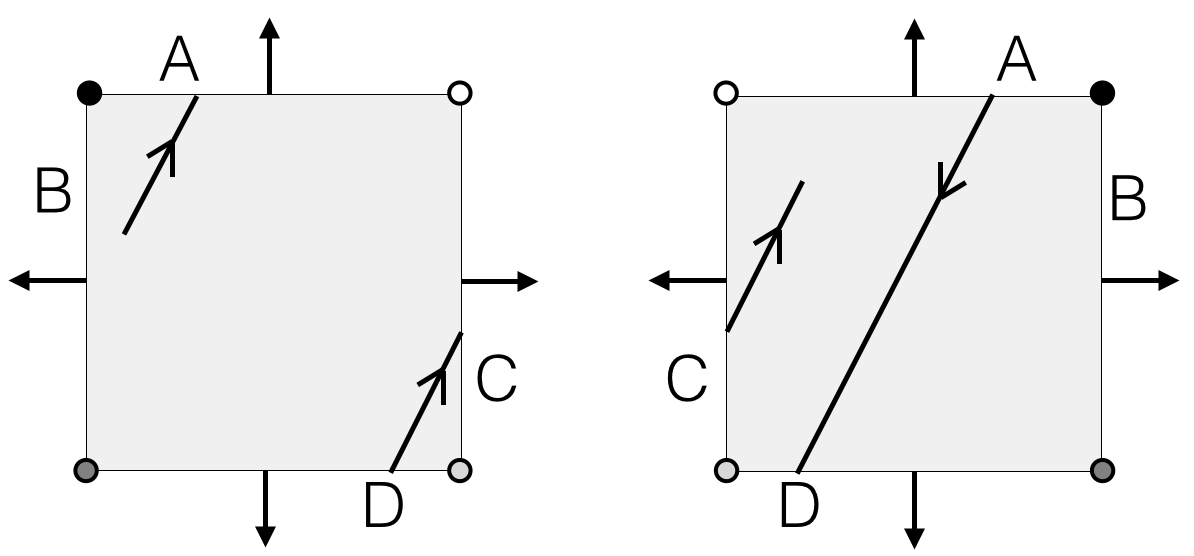} 
\caption{A surface with parallel edge identifications that are \emph{not} opposite. \label{not-opp-par}}
\end{center}
\end{figure}

We can create a surface by identifying opposite parallel edges of a single polygon, as we have been doing, and we can do the same with two polygons, or with any number of polygons. Figure \ref{multipoly} shows some examples. \index{double pentagon} \index{octagon}

\begin{figure}[!h]
\begin{center}
\includegraphics[width=90pt]{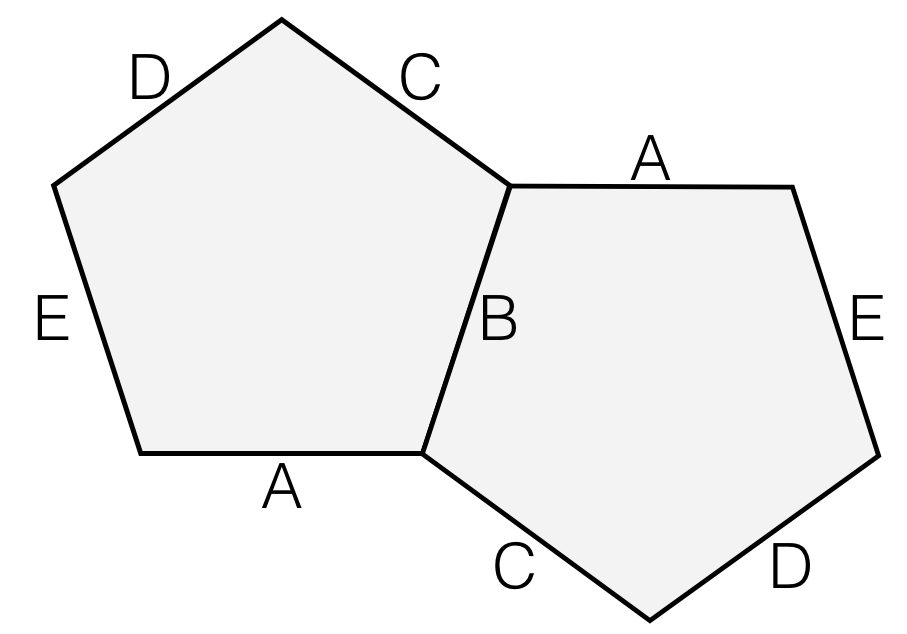} \ \ \ \
\includegraphics[width=60pt]{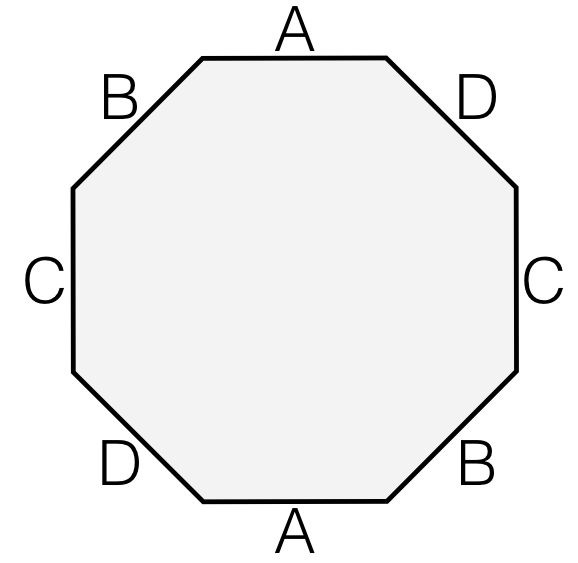} \ \ \  \ \ \ \ \ 
\includegraphics[width=150pt]{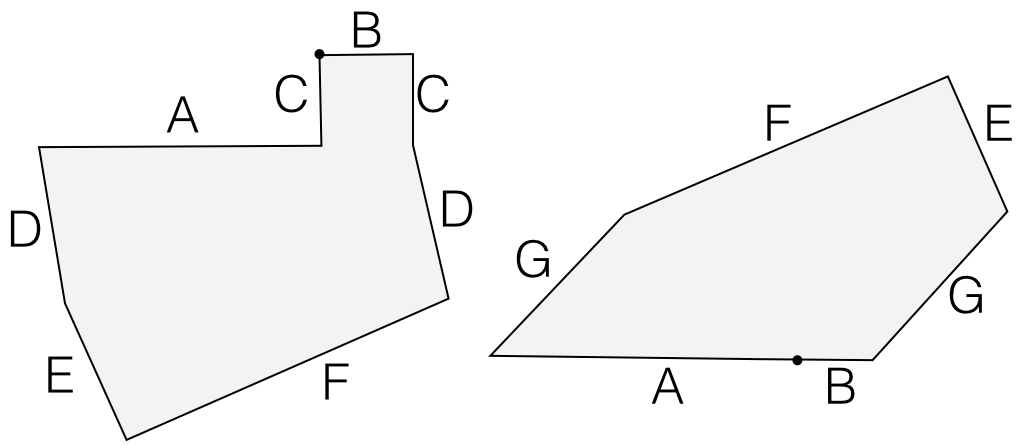}
\caption{Edges marked with the same letter are identified by translation. \label{multipoly}}
\end{center}
\end{figure}

The double pentagon and octagon surfaces in Figure \ref{multipoly} have a lot of symmetry, and we will explore them in Section \ref{regularpolygons}. The ``random'' surface on the right side of Figure \ref{multipoly} is just an arbitrary surface with opposite parallel edges identified.

We can ask the same questions about polygon identification surfaces in general that we explored in the special case of the square torus, such as which directions are periodic. For the square torus, the periodic directions are exactly those with rational slope. For other surfaces, the criterion for periodicity is different. We call a trajectory \emph{periodic} if it repeats, which is equivalent to it visiting  corresponding points in two congruent polygons in the unfolding. 

\begin{example}
In Figure \ref{per-directions}a, slope $2/3$ is a periodic direction because it passes through two points (marked with black dots) that are in the same position in two different squares. If we connect the centers of consecutive squares that the trajectory passes through, we end up with three arrows in direction $[1,0]$ and two in direction $[0,1]$, so the trajectory is in the direction $[3,2]$.

In Figure \ref{per-directions}b, \index{double pentagon} the trajectory is periodic, because it passes through two points (marked with black dots) that are in the same position in two different pentagons. If we connect the centers of consecutive pentagons that the trajectory passes through, we end up with the vector
$$2[\cos(2\pi/5),\sin(2\pi/5)]+[\cos(-2\pi/5),\sin(-2\pi/5)]+[\cos(\pi/10),\sin(\pi/10)].$$

In Figure \ref{per-directions}c, the trajectory is periodic,\index{octagon} because it passes through two points (marked with black dots) that are in the same position in two different octagons. If we connect the centers of consecutive octagons that the trajectory passes through, we end up with the vector
$$2[1,0]+2\left[\sqrt{2}/2,\sqrt{2}/2\right].$$
\end{example}

\begin{figure}[!h]
\begin{center}
\includegraphics[height=65pt]{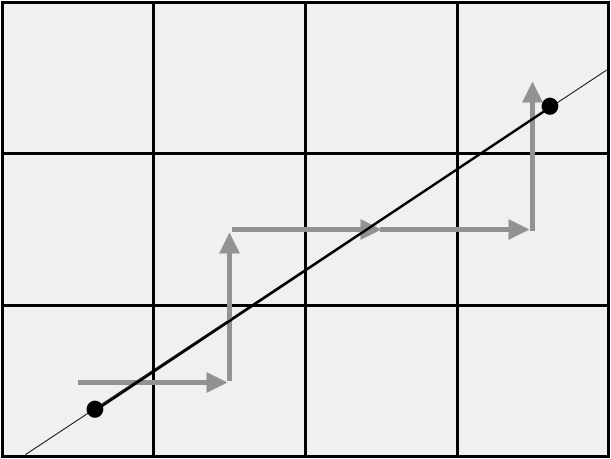} \ \ \  \
\includegraphics[height=65pt]{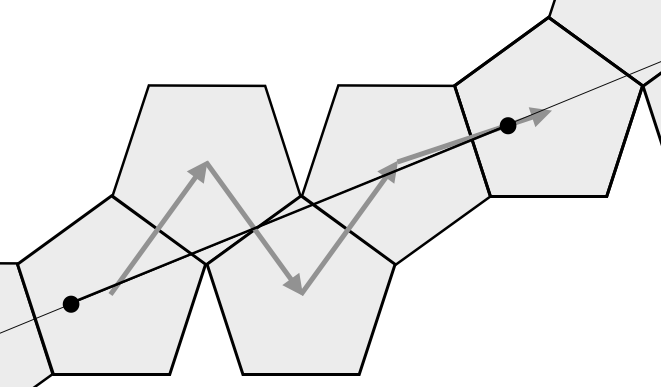} \ \ \  \
\includegraphics[height=65pt]{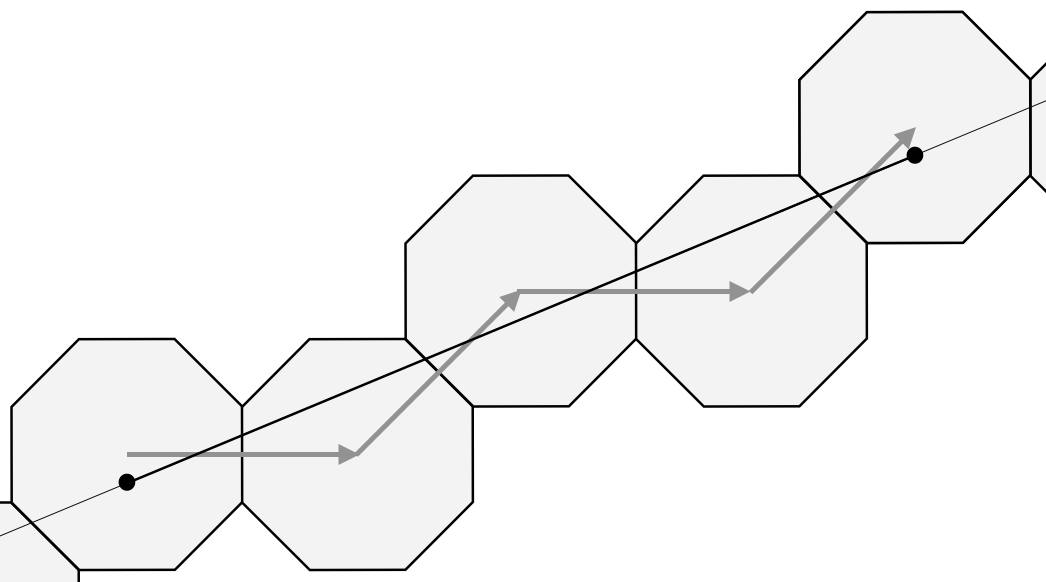}
\caption{Periodic directions in the square, double pentagon and octagon \label{per-directions}}
\end{center}
\end{figure}

\begin{proposition}
For a surface made by identifying opposite parallel edges of congruent regular polygons, the rational directions are the directions that are sums of unit vectors in directions perpendicular to each edge. 
\end{proposition}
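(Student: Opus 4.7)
Plan: The approach is to formalize the observation already illustrated in the preceding examples, namely that a trajectory is periodic precisely when, in the plane unfolding, it reaches a point in the same relative position inside some translated copy of the original polygon. First I would set up the unfolding carefully: when the trajectory crosses an identified edge, we continue by placing a congruent copy of the polygon adjacent via the gluing, which is a pure translation (no reflection) because opposite parallel edges are identified by translation. The displacement from one copy to the next is a vector $\vec v_i$ taking the crossed edge onto its partner; since those two edges are opposite and parallel, $\vec v_i$ is perpendicular to that edge pair, with length equal to the distance between the two edges. Let $\vec v_1,\ldots,\vec v_k$ be one such vector per pair of identified edges.

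Next I would establish the key equivalence: a trajectory is periodic iff its direction is that of some nonzero integer linear combination $\sum n_i \vec v_i$. In one direction, the period vector of a periodic trajectory telescopes as the signed sum of the edge-crossing translations picked up over one period, giving exactly such a combination. In the other direction, given any nonzero $\vec P = \sum n_i \vec v_i$, a generic trajectory in direction $\vec P$ starting from a point $x$ travels for length $\|\vec P\|$ and then arrives at $x + \vec P$, which by construction lies in the corresponding position of the polygon copy displaced by $\vec P$ from the original; since that point is identified with $x$ on the surface, the trajectory closes up.

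Finally I would exploit regularity: by rotational symmetry, the distance between any two opposite parallel edges of a congruent regular polygon is the same constant $h$, so each $\vec v_i = h\,\hat n_i$ for a unit vector $\hat n_i$ perpendicular to that edge pair. Hence the direction of $\sum n_i \vec v_i$ equals the direction of $\sum n_i \hat n_i$, which is the claim. The loose phrase ``sums of unit vectors'' in the proposition should be read as integer linear combinations: negative coefficients can be avoided by replacing $-\hat n_i$ with the unit perpendicular on the other side of the polygon, and any periodic direction admits such a nonnegative representation by choosing a consistent orientation along the trajectory, matching the format of the three examples displayed just before the proposition.

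The principal obstacle is verifying the constant-width property uniformly. For a single regular polygon with an even number of sides, such as the square, hexagon, or octagon, constant width between opposite parallel sides is immediate from the polygon's rotational symmetry. For surfaces assembled from several congruent regular polygons, such as the double pentagon, one must check that every identified edge pair is still separated by the same perpendicular distance; this follows because the two pentagons are isometric copies of the same regular pentagon, arranged so each identified pair consists of two parallel translates of a common edge of the regular pentagon, and the perpendicular separation between such a pair is a single geometric constant of the pentagon. A secondary subtlety is ruling out periodic trajectories whose period vector is a \emph{non}-integer combination of the $\vec v_i$, which is handled by observing that the unfolding lives inside the lattice (or finitely generated additive group) generated by $\vec v_1, \ldots, \vec v_k$, so every period vector must lie in that group.
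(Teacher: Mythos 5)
The paper offers no proof of this proposition at all: its only support is the preceding example, which reads off the direction of an already-given periodic trajectory by summing the center-to-center displacements of consecutive polygons in the unfolding. Your forward direction is a correct formalization of exactly that idea. The unfolding proceeds by translations because opposite parallel edges are identified by translation; each edge crossing displaces the next copy by $2a\,\hat n$, where $\hat n$ is the outward unit normal of the crossed edge and $a$ is the common apothem of the congruent regular polygons (this, rather than ``the perpendicular distance between the two identified edges as drawn,'' is the cleanest way to see that all the vectors $\vec v_i$ have equal length); the period vector telescopes to $2a\sum \hat n_i$, so every periodic direction is a nonnegative integer combination of outward unit normals. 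Your treatment of signs is also fine.

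The converse direction has a genuine gap. You assert that the point $x+\vec P$ ``by construction lies in the corresponding position of the polygon copy displaced by $\vec P$ from the original.'' That is true for the square (or hexagonal) torus, where the unfolded copies tile the plane and the copy containing a point $y$ is the translate of $P_0$ by the lattice part of $y$. For a higher-genus surface such as the regular octagon or the double pentagon, the unfolding of a single trajectory is a chain of copies whose developed images overlap, the developing map is not injective, and the copy containing the endpoint of the segment is $P_0+\vec Q$, where $\vec Q$ is the sum of the crossing vectors actually encountered along that particular chain. Nothing forces $\vec Q=\vec P$, so the trajectory need not close up after length $\lVert\vec P\rVert$, nor ever. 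The same issue undercuts your closing paragraph: membership of the period vector in the group generated by the $\vec v_i$ is the (correct) forward direction, but membership of an arbitrary $\vec P$ in that group does not manufacture a closed trajectory. This is not a repairable slip in wording: if ``rational direction'' is read as ``periodic direction,'' the converse inclusion is known to fail for some regular polygon surfaces --- there exist directions in the $\mathbb{Z}$-span of the edge normals (equivalently, in the trace field) that are not periodic (Arnoux--Schmidt). So an honest proof can establish only the inclusion your first half gives, which is also all that the paper's examples illustrate; the reverse inclusion would require either a different reading of ``rational direction'' or machinery (Veech groups, complete periodicity) far beyond unfolding.
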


%

\newpage
\section{Vertices and the Euler characteristic} \index{Euler characteristic}\index{vertex}\label{vert-eul-char}

Once we've made a surface, the Euler characteristic gives us a way of easily determining what kind of surface we obtain, without needing to come up with a clever trick like cutting up and reassembling  hexagons into parallelograms.

\begin{definition} \label{eulerchar}
Given a surface $S$ made by identifying edges of polygons, with $V$ vertices, $E$ edges, and $F$ faces, its \emph{Euler characteristic} is $\chi(S) = V-E+F$.
\end{definition}

\begin{example} \label{ecex}\index{tetrahedron}\index{cube}
A tetrahedron has $4$ vertices, $6$ edges and $4$ faces, so $$\chi(\text{tetrahedron})=4-6+4=2.$$
A cube has $8$ vertices, $12$ edges and $6$ faces, so $$\chi(\text{cube})=8-12+6=2.$$
The surface created by identifying opposite parallel edges of a square has one vertex (see Section \ref{st}), two edges ($A$ and $B$) and one face, so $$\chi(\text{square torus})=1-2+1=0.$$
\end{example}

\begin{exercise}\index{Euler characteristic}
Find the Euler characteristic of the octahedron, of a triangular prism, and of another surface of your choice.
\end{exercise}

We can use the Euler characteristic to determine the genus (number of holes) of a surface.\index{genus}

\begin{theorem}\index{Euler characteristic}\label{eulerthm}
A surface $S$ with genus $g$ has Euler characteristic $\chi(S)=2-2g$.
\end{theorem}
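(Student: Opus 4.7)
The plan is to prove this by induction on the genus $g$, leveraging the fact that the Euler characteristic $V - E + F$ is invariant under refinement of a polygonal decomposition. The base case $g = 0$ is the sphere, and the tetrahedron computation in Example \ref{ecex} gives $\chi(\text{sphere}) = 2 = 2 - 2 \cdot 0$; the square torus in the same example gives $\chi = 0 = 2 - 2 \cdot 1$, matching genus $1$. The inductive step will show that attaching a handle (increasing the genus by $1$) decreases $\chi$ by exactly $2$.

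First I would establish invariance of $V - E + F$ under the two elementary moves on a cell decomposition:
\begin{enumerate}
\item Subdividing an edge by inserting a new vertex: $\Delta V = 1$, $\Delta E = 1$, $\Delta F = 0$, giving $\Delta \chi = 0$.
\item Subdividing a face by drawing a new edge between two existing vertices on its boundary: $\Delta V = 0$, $\Delta E = 1$, $\Delta F = 1$, giving $\Delta \chi = 0$.
\end{enumerate}
Since any two cell decompositions of the same surface share a common refinement obtainable by finitely many such moves, $\chi$ depends only on the surface, not on the decomposition. This legitimizes computing $\chi$ using a convenient model for each genus.

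For the inductive step, suppose $\chi = 2 - 2g$ for some surface of genus $g$, and form a surface of genus $g+1$ by attaching a handle. Concretely, pick two triangular faces on the surface, remove their interiors, and glue the two boundary triangles together (with matching orientations so the result is a handle, not a crosscap). The bookkeeping: we lose $2$ faces, we identify $3$ pairs of edges into $3$ edges (so $E$ decreases by $3$), and we identify $3$ pairs of vertices into $3$ vertices (so $V$ decreases by $3$). Hence $\Delta \chi = -3 - (-3) + (-2) = -2$, so the new surface has $\chi = (2 - 2g) - 2 = 2 - 2(g+1)$, completing the induction.

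The main obstacle is really two background facts that sit beneath the inductive argument. The first is that any two cell decompositions of a surface admit a common refinement reachable by the two elementary moves above — this requires some care because cells from different decompositions can cross in complicated ways, and one usually passes to a simplicial subdivision to control the intersections. The second, and more substantial, is the classification of closed orientable surfaces: every such surface is homeomorphic to a sphere with $g$ handles attached, so the induction actually exhausts all genera. Both facts are standard in topology but are considerably harder than the counting argument itself; in the spirit of this text I would state them carefully and appeal to them, while presenting the handle-counting calculation in full.
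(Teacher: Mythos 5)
The paper does not actually supply a proof of Theorem \ref{eulerthm}; it states the result and immediately uses it, in keeping with its expository aims. So your proposal cannot be compared against an ``official'' argument, but judged on its own it is the standard topological proof and the part you carry out in detail is correct. The two elementary moves do preserve $V-E+F$, and the handle-attachment bookkeeping checks out: removing the interiors of two disjoint triangular faces and gluing the boundary triangles to each other gives $\Delta V=-3$, $\Delta E=-3$, $\Delta F=-2$, hence $\Delta\chi=-2$, provided you first refine the decomposition so that two faces are honestly disjoint triangles sharing no vertices or edges (otherwise the count of identified vertices is wrong). You are also right to flag the two background facts as the real content: the independence of $\chi$ from the chosen decomposition (which needs the common-refinement argument, and is harder than the two moves suggest because transverse intersections of cells must be tamed), and the classification of closed orientable surfaces, without which the induction does not exhaust all surfaces of genus $g$. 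Citing those as known results and presenting the counting in full is a reasonable and honest division of labor, and arguably more than the text itself offers; just make the disjointness hypothesis on the two triangles explicit, and note that the orientation condition on the gluing is what guarantees the new surface is orientable of genus $g+1$ rather than non-orientable.
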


\begin{example} \label{ecgenus} We determine the genus of the surfaces from Example \ref{ecex}:\index{tetrahedron}\index{cube}\index{genus}
$$\chi(\text{tetrahedron})=\chi(\text{cube})=2 \wimplies 2-2g=2 \wimplies g=0,$$ so a tetrahedron and a cube both have genus $0$ (no holes). $$\chi(\text{square torus})=0 \wimplies 2-2g=2 \wimplies g=1,$$ so the torus has genus $1$ (one hole).  All of these results are consistent with our previous knowledge of tetrahedra, cubes and tori.
\end{example}


Given a polygon, or collection of polygons, whose edges have been identified to form a surface, it is easy to count $E$ and $F$: $E$ is half the total number of edges of the polygons, since they are pairwise-identified, and $F$ is the number of polygons you have. To determine the number of vertices, we have to do a little bit more work.\\

\newpage
\noindent {\bf Finding the different vertices of a surface} 
\begin{example}
It's easiest to explain how to identify the vertices of a surface with an example (Figure \ref{vertex-chasing-torus}). Let's do the square torus: first, mark any vertex (say, the top left). We want to see which other vertices are the same as this one. The marked vertex is at the left end of edge $A$, so we also mark the left end of the bottom edge $A$. We can see that the top and bottom ends of edge $B$ on the left are now both marked, so we mark the top and bottom ends of edge $B$ on the right, as well. Now all of the vertices are marked, so the surface has just one vertex. (We saw this via a different method in Section \ref{st}, by noticing that the four corners of the square all come together.)
\end{example}

\begin{figure}[!h]
\begin{center}
\includegraphics[width=100pt]{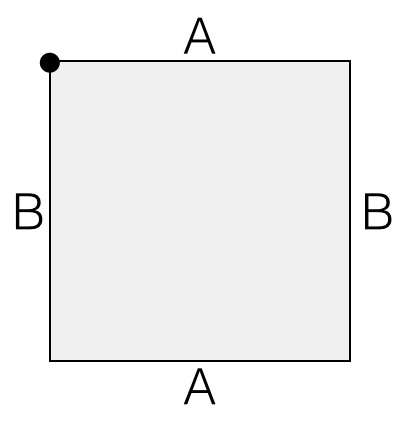} \ \ \ \ \
\includegraphics[width=100pt]{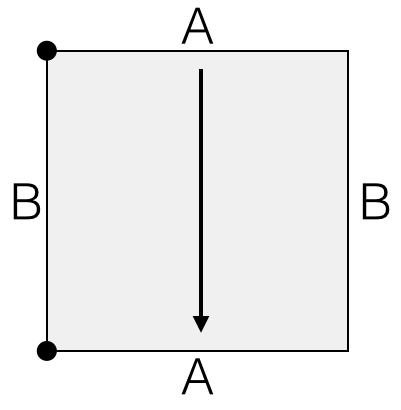}\ \ \ \ \
\includegraphics[width=103pt]{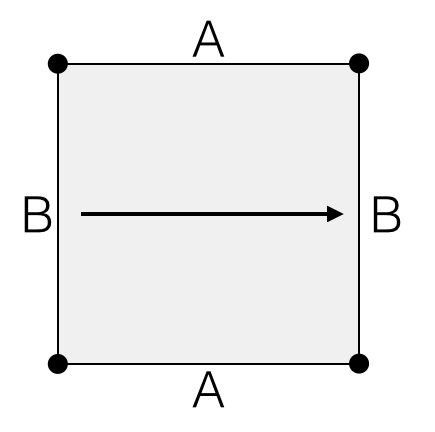}
\caption{The square torus has one vertex. \label{vertex-chasing-torus}}
\end{center}
\end{figure}

\begin{example} \index{hexagon}\index{vertex}
We determine how many vertices a hexagon with opposite parallel edges identified has (Figure \ref{vertex-chasing-hex}): First, mark any vertex (say, the left end of the top edge $A$ again). This vertex is also at the left end of the bottom edge $A$, so we mark that vertex as well. Now this marked vertex is at the right of edge $C$ on the left side of the hexagon, so we mark the right end of edge $C$ on the left side of the hexagon as well. Now this marked vertex is at the left end of edge $B$ on the right side of the hexagon, so we mark the left end of edge $B$ on the left side of the hexagon $-$ and we find ourselves back where we started. So these three vertices of the hexagon are the same vertex in the surface.

\begin{figure}[!h]
\begin{center}
\includegraphics[width=80pt]{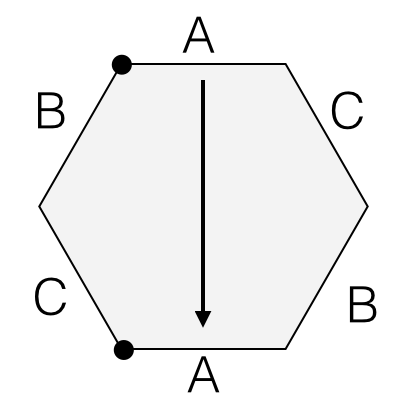} \
\includegraphics[width=80pt]{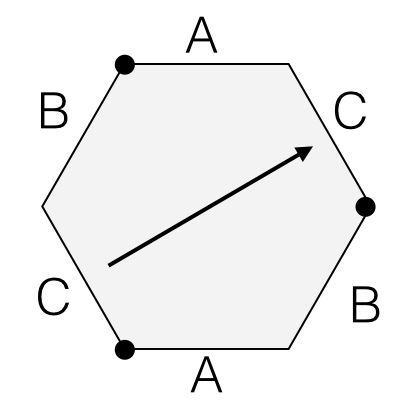}\
\includegraphics[width=80pt]{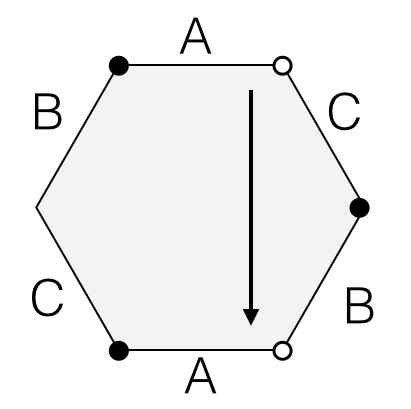} \
\includegraphics[width=80pt]{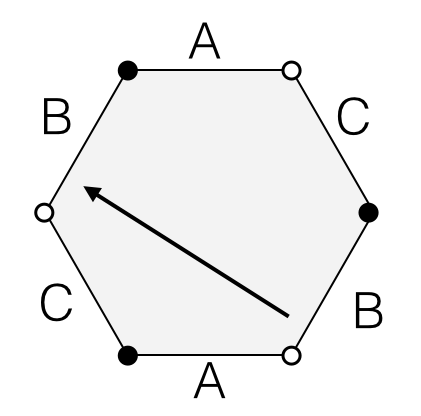}
\caption{The hexagonal torus has two vertices. \label{vertex-chasing-hex}}
\end{center}
\end{figure}

Now we choose any un-marked vertex on the hexagon (say, the right end of the top edge $A$) and mark it with a different color (white in Figure \ref{vertex-chasing-hex}). This vertex is also at the right end of the bottom edge $A$, so we mark that vertex white as well. Now the right end of the bottom edge $A$ is also the left end of edge $B$, so we mark the left end of vertex $B$ on the left side of the hexagon. This is the left vertex of edge $C$, so we mark the left vertex of edge $C$ on the right side of the hexagon white as well $-$ and we are again back where we started.

We have discovered that the hexagon surface has two vertices, each corresponding to three vertices on the flat hexagon.
\end{example}

\begin{example} \index{hexagon}\index{Euler characteristic}\index{genus}
Now we can calculate the Euler characteristic of the hexagon surface: It has two vertices, three edges and one face, so its Euler characteristic is $$\chi(\text{hexagonal torus})=2-3+1=0.$$ As  in Theorem \ref{eulerthm} and Example \ref{ecgenus}, an Euler characteristic of $0$ means that the surface has genus $1$, so the hexagon surface is indeed the torus.
\end{example}

\noindent {\bf Determining the angle around a vertex} 
\begin{example}
We can determine the angle around a vertex by ``walking around'' it. \index{vertex, angle around} To do this, first choose a vertex (say, the top left vertex of the hexagon, between edges $A$ and $B$, which we marked as black) and draw a counter-clockwise arrow around the vertex. In our example, this arrow goes from the top end of edge $B$ to the left end of edge $A$ (Figure \ref{vertex-walking-hex}a). Now we find where that arrow ``comes out'' on the identified edge $A$ at the bottom of the hexagon, and keep going: now the arrow goes from the left end of the bottom edge $A$ to the bottom end of the left edge $C$. We keep going at the bottom of the right edge $C$ and draw an arrow to the top end of right edge $B$. We find the identified point on the top end of left edge $B$, and see that this is where we started! So the angle around the black vertex is $3\cdot 2\pi/3=2\pi$.

By the same method, we can see that the angle around the white vertex is also $2\pi$ (Figure \ref{vertex-walking-hex}b).
\end{example}

\begin{figure}[!h]
\begin{center}
\includegraphics[height=100pt]{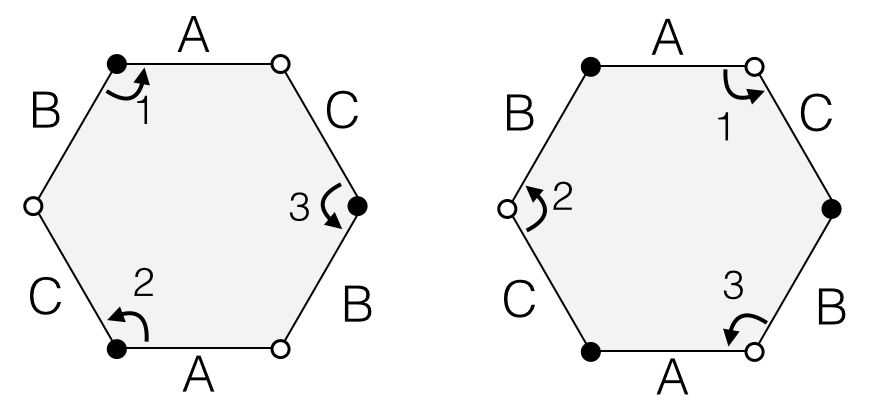} 
\caption{The angle around each vertex of the hexagonal torus is $2\pi$. \label{vertex-walking-hex}\index{hexagon}}
\end{center}
\end{figure}

Since the black and white vertices each have $2\pi$ of angle around them, all the corners of the surface come together in a flat plane, as we previously saw in the picture of the hexagons tiling the plane in Figure \ref{hex-tiling}.

\begin{definition} \index{flat surface} \index{cone point}
A surface is called \emph{flat} if it looks like the flat plane everywhere, except possibly at finitely many \emph{cone points} (vertices), where the vertex angle is a multiple of $2\pi$.
\end{definition}


\begin{exercise} \index{vertex, angle around} \label{flat-proof}\index{flat surface}
Prove that if a surface is created by identifying opposite parallel edges of a collection of polygons, then it is \emph{flat}: every vertex of that surface has a vertex angle that is a multiple of $2\pi$.
\end{exercise}

\begin{exercise} \label{not-flat}
Show that the surface in Figure \ref{not-opp-par} is not flat, i.e. it has a vertex with an angle that is not a multiple of $2\pi$. Find its genus. Are you surprised?
\end{exercise}


\begin{exercise}\index{vertex, angle around}\index{Euler characteristic}\index{genus}
Find the number of vertices, and the angle around each vertex, and the Euler characteristic, and the genus, of 
each of the surfaces in Figure \ref{multipoly}.
\end{exercise}


\begin{exercise} \index{vertex, angle around}
For each of the following, construct a surface, made from polygons with opposite parallel edges identified, with the given property, or explain why it is not possible to do so:
\begin{enumerate}[(a)]
\item One of the vertices has  angle $6\pi$ around it.
\item One of the vertices has angle $\pi$ around it.
\item Two of the vertices have different angles around them.
\end{enumerate}
\end{exercise}

\begin{exercise}\index{cone point}\index{vertex, angle around}
What does it look like to have $6\pi$ of angle at a vertex? Cut slits in three sheets of paper, and tape the edges together as in Figure \ref{six-pi}. The vertex angle at the white point is now that of three planes, which is $6\pi$.
\end{exercise}

\begin{figure}[!h]
\begin{center}
\includegraphics[height=90pt]{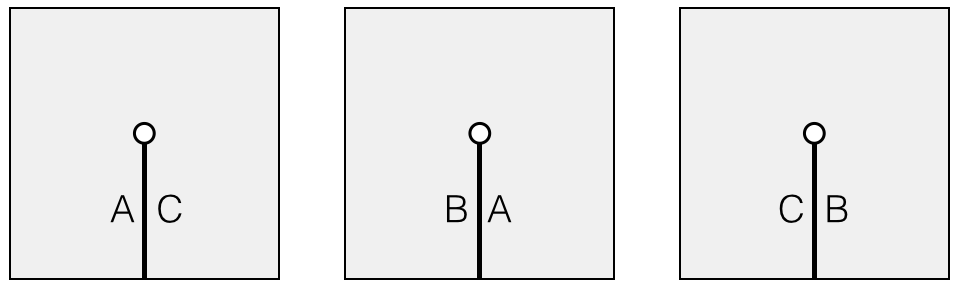} 
\caption{How to construct a vertex angle of $6\pi$ out of paper and tape. \label{six-pi}}
\end{center}
\end{figure}

\begin{exercise} Determine how many vertices the surface created by identifying opposite parallel edges of a $2n$-gon has. Determine how many vertices the surface created by identifying opposite parallel edges of two $n$-gons, one of which is a $180^{\circ}$ rotation of the other, has. (See Figure \ref{multipoly}.) Your answer may depend on $n$.
\end{exercise}

Many of the ideas that we explored for the square torus also apply to other surfaces made from polygons.  In Section \ref{st}, we sheared the square torus into a parallelogram and then reassembled the pieces, which was a twist of the surface. We were able to do this because the square is extremely symmetric. In Section \ref{letssheareverything}, we will see that we can do this same shearing and twisting on many other surfaces, which we introduce in Sections \ref{squaretiledsurfaces}, \ref{regularpolygons}, \ref{wardsurfaces}, and \ref{bmsurfaces}.


 \section{Cylinders}\index{cylinder}\label{cylinders} \label{letssheareverything}\index{shear}
 
 In Section \ref{st}, we sheared the square torus by the matrix $\shear$, which transformed it into a parallelogram, and then we reassembled the pieces back into a square (Figure \ref{shear-twist}). That action was a twist of the torus surface. We can apply the same kind of transformation to many other surfaces.\index{torus}\index{shear}
 
 \begin{example} \label{shearltable}\index{L-shaped table}
 Consider the \lst made of three squares, with edge identifications as shown in Figure \ref{shearltablefig}. We shear it by the matrix $\stt 1201$ and then reassemble (by translation) the pieces back into the $\mathsf{L}$ shape.
 
 \begin{figure}[!h]
 \begin{center}
\includegraphics[width=320pt]{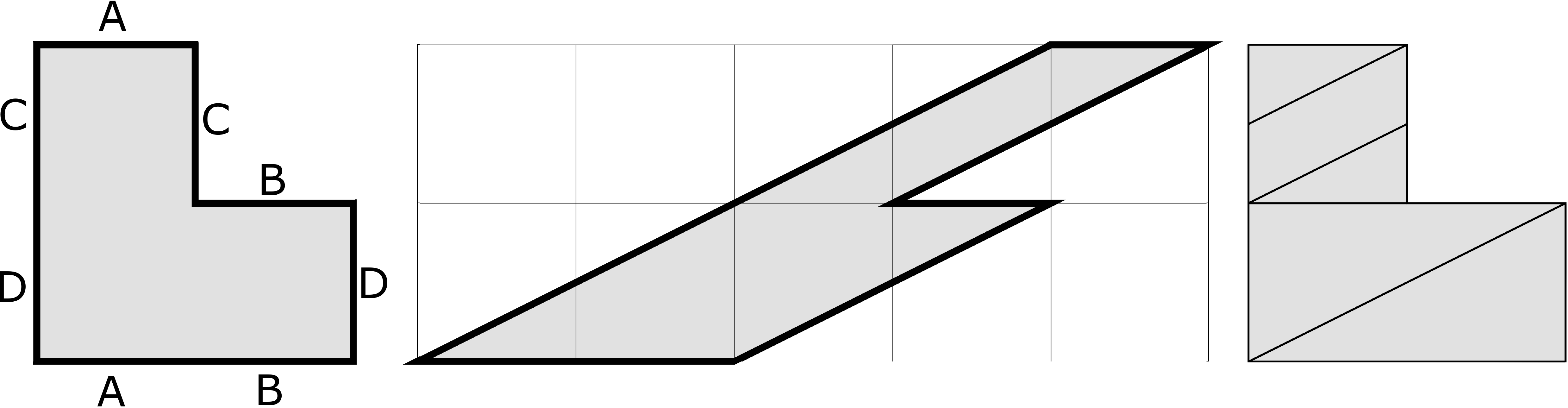}
\caption{We shear, we reassemble, we conquer \label{shearltablefig}}
\end{center}
\end{figure}
  \end{example}
  
  \begin{exercise}
  Show how to reassemble (by translation) the pieces of the sheared \lst in the middle of Figure \ref{shearltablefig} into the \lst on the right. 
  \end{exercise}
 
  Notice that the $2\times 1$ rectangle at the bottom has been twisted once, like the square in Figure \ref{shear-twist}. The $1\times 1$ square at the top has been twisted twice. This is because the square is half as wide as the rectangle. Let's develop some language to describe this:\index{shear}

  \begin{definition} \label{def:modulus}\label{def:cylinder}\index{cylinder}\index{modulus}
For a surface made from congruent copies of a regular polygon, a \emph{cylinder direction} is a direction of any trajectory that goes from a vertex to another vertex.
  

We can partition such a surface into \emph{cylinders}, whose boundaries are in the cylinder direction and which have no vertices on the interior. (To construct the cylinders, draw a line in the cylinder direction through each vertex of the surface, which cuts the surface up into strips, and then follow the edge identifications to see which strips are glued together. If a line divides two strips that are in the same cylinder, delete it.)

The \emph{modulus} of a cylinder is the ratio of its width to its height. The width and height are measured parallel to, and perpendicular to, the cylinder direction.
  \end{definition}
  
For the square torus, there is just one horizontal cylinder, with modulus $1$.
    
The \lst in Example \ref{shearltable} has two horizontal cylinders, one of modulus $1$ and the other of modulus $2$.

For the square torus, and for any square-tiled surface, cylinder directions are those with rational slope.

\begin{example}\index{octagon}\index{cylinder}\index{modulus}\label{oct-mod}
The regular octagon surface (Figure \ref{octcyl}) has two horizontal cylinders: The white rectangular center of the octagon, and grey parallelogram composed of the top and bottom trapezoids. We can calculate their moduli:

The rectangle has width $1+\sqrt{2}$ and height $1$, so its modulus is $1+\sqrt{2}$.

The parallelogram has width $2+\sqrt{2}$ and height $1/\sqrt{2}$, so its modulus is $\frac{2+\sqrt{2}}{\sqrt{2}} = 2(1+\sqrt{2})$.
 
One modulus is twice the other. This turns out to be the case for any  regular even-gon surface, as we will see later (Proposition \ref{evenmod}).
\end{example}

  \begin{figure}[!h]
  \begin{center}
\includegraphics[height=100pt]{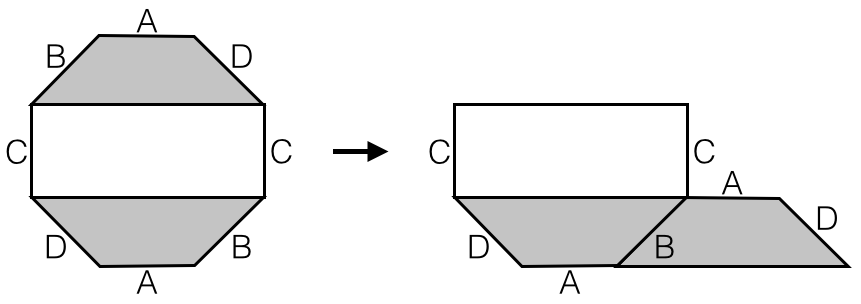}  
\caption{A cylinder decomposition for the regular octagon surface \label{octcyl}\index{octagon}}
\end{center}
\end{figure}

 \begin{exercise}\label{hexagon}\index{hexagon}\index{cylinder}\index{modulus}\index{cutting sequence}
 Decompose the regular hexagon into horizontal cylinders, and find the modulus of each. Do so for each of the orientations shown in Figure \ref{two-hexes}.
 
  \begin{figure}[!h]
  \begin{center}
\includegraphics[width=200pt]{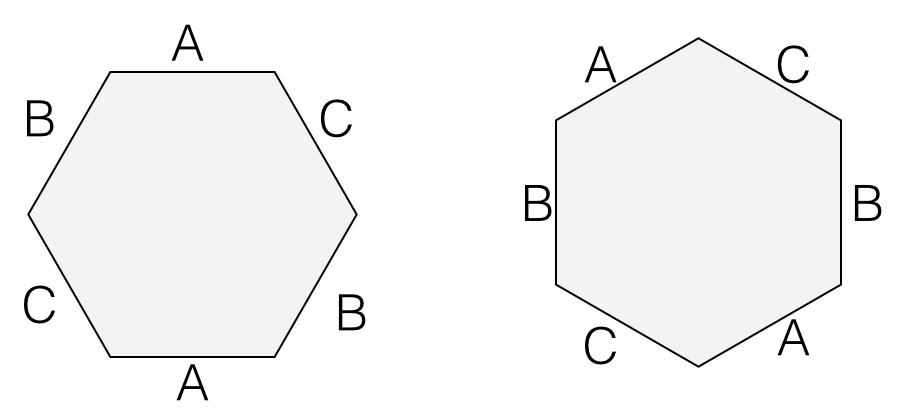}
\caption{Two different orientations of the regular hexagon surface \label{two-hexes}}
\end{center}
\end{figure}
 \end{exercise}

 Cylinders are in the direction of any trajectory that connects two vertices of the surface. Cylinder decompositions of the double pentagon in four different directions are shown in Figure \ref{pent-cyl}. \index{cylinder}
 
   \begin{figure}[!h]
  \begin{center}
\includegraphics[width=340pt]{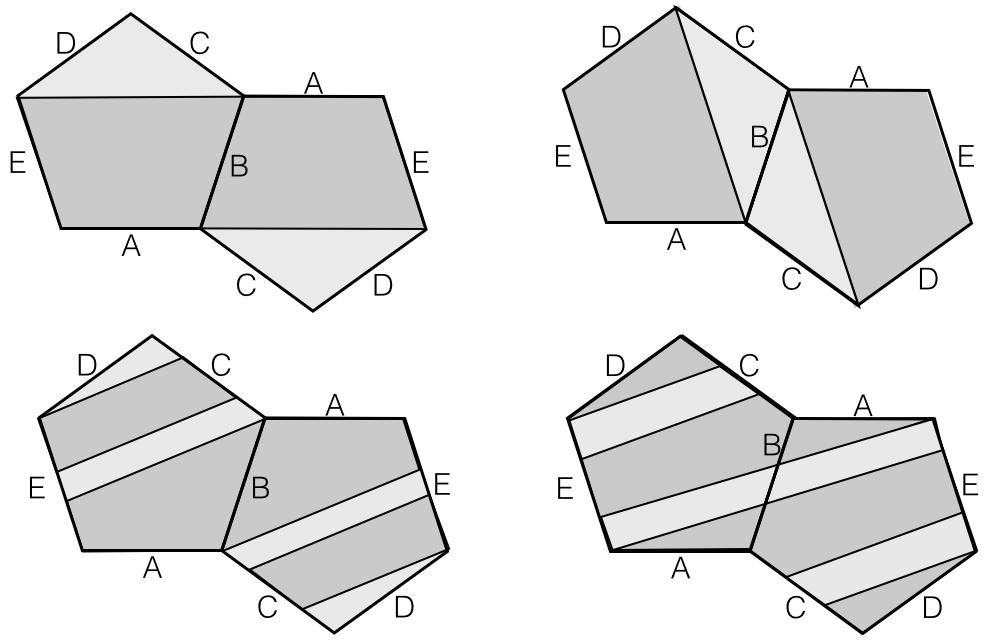} 
\caption{Four cylinder decompositions of the double pentagon surface \label{pent-cyl}}
\end{center}
\end{figure}

In Section \ref{correspondence}, we explored in great detail how to translate between the slope of a trajectory on the square torus, and its corresponding cutting sequence. This is possible because the cutting sequence corresponding to a trajectory on the square torus depends \emph{only} on the slope. \index{cutting sequence}

In fact, there is a different cutting sequence in each cylinder in the cylinder decomposition in a given direction. So surfaces with multiple cylinders have multiple cutting sequences corresponding to trajectories in the same direction.\index{cutting sequence}

\begin{example}\label{different-period}\index{cutting sequence}
For the regular octagon surface in Figure \ref{octcyl}, the horizontal trajectory in the white cylinder has cutting sequence $\overline{C}$, and in the grey cylinder has cutting sequence $\overline{BD}$.

For the double pentagon surface in Figure \ref{pent-cyl}, (a) cutting sequences in the horizontal direction are $\overline{BE}$ and $\overline{CD}$, (b) in the direction in (b) are $\overline{AD}$ and $\overline{BC}$, in the direction in (c) are $\overline{ABECEB}$ and $\overline{CDCE}$ and in the direction in (d) are $\overline{BECE}$ and $\overline{ABECDCEB}$.\index{double pentagon}

Notice that for a given direction, the cutting sequences need not have the same period.
\end{example}

\begin{exercise}\index{double pentagon}\index{cylinder}\index{cutting sequence}
\begin{enumerate}[(a)]
\item Construct a vertical cylinder decomposition of the double pentagon surface. Find the cutting sequence corresponding to a vertical trajectory in each cylinder. \index{double pentagon}\index{cylinder}
\item The two cylinder decompositions in the top line of Figure \ref{pent-cyl}a are essentially the same, just in a different direction. Is the vertical cylinder decomposition you found the same as either of those in the bottom line of Figure \ref{pent-cyl}?
\end{enumerate}
\end{exercise}

Any symmetry of a flat surface must take vertices to vertices, so any shear we apply to a flat surface must twist each cylinder an integer number of times.\index{shear}\index{symmetry}

\begin{lemma}
Let $m\in\mathbf{N}$. The shear $\stt 1m01$ twists the square torus $m$ times.
\end{lemma}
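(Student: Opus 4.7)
The plan is to reduce the statement to the basic case $m=1$, which was established in Section \ref{sq-tor-symm}, by observing that $\stt 1m01$ is the $m$-fold power of the basic shear $\shear$.

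First, I would verify by a short induction (or direct computation) that
\[
\shear^m = \stt 1101^m = \stt 1m01.
\]
This is the standard identity for powers of a unipotent $2\times 2$ matrix and only needs the base case $m=1$ together with the computation $\stt 1k01\stt 1101 = \stt 1{k+1}01$.

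Next, I would invoke the geometric picture from Figure \ref{shear-twist}: applying $\shear$ to the square torus transforms it into a parallelogram which, after cutting along the positive diagonal and reassembling the two triangles by translation (respecting edge identifications), yields a square torus in which edge $A$ has been given one full twist. Because the operation ``shear and reassemble'' is a symmetry of the square torus, composing it with itself $m$ times gives the same result as first applying the composite linear map $\shear^m = \stt 1m01$ and then reassembling. Each individual reassembly contributes exactly one full twist along edge $A$, so $m$ compositions give $m$ twists.

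The only step requiring care, and the one I expect to be the main obstacle, is checking that performing ``shear then reassemble'' $m$ times really does agree with performing the single linear shear $\stt 1m01$ followed by a single reassembly into the unit square. Concretely, I would argue that reassembly by translation is a rearrangement of pieces of the torus that does not affect the underlying surface, so it commutes (up to isotopy on the torus) with the next shear. Equivalently, one can check directly that after applying $\stt 1m01$, each horizontal line at height $y\in[0,1]$ is translated to the right by $my$; under the identification of the left and right edges, this means the top of the cylinder has been rotated by $m$ full widths relative to the bottom, which is precisely $m$ twists in the sense of Section \ref{cylinders}. Either formulation yields the lemma.
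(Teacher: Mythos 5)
Your proposal is correct, but your primary route differs from the paper's. The paper's proof is a single direct computation: the shear $\stt 1m01$ sends the vertical unit edge of the square to the vector $[m,1]$, which cuts across $m$ unit squares as it rises to height $1$, so the torus is twisted $m$ times. Your main argument instead factors $\stt 1m01 = \shear^m$ and iterates the known single-twist picture $m$ times; this is valid, but it forces you to justify the extra step (which you correctly identify) that ``shear then reassemble'' performed $m$ times agrees with the single linear map followed by one reassembly --- true because the cut-and-translate reassembly is an automorphism of the translation surface, but it is an additional thing to check. What the iterative route buys is a conceptual link to the decomposition of general shears into basic ones (Proposition \ref{reducingprop}); what the paper's route buys is brevity and no commutation issue. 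Your closing ``equivalent'' formulation --- that the horizontal line at height $y$ is translated by $my$, so the top of the cylinder winds $m$ full widths past the bottom --- is essentially the paper's own argument restated in terms of horizontal displacement rather than the image of the vertical edge, and on its own it already suffices to prove the lemma.
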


\begin{proof}
The shear $\stt 1m01$ transforms the vertical unit edges of the square into edges with vector $[m,1]$ which thus cut across $m$ squares as they achieve a height of $1$, so the square is twisted $m$ times.
\begin{figure}[!h]
\begin{center}
\includegraphics[width=360pt]{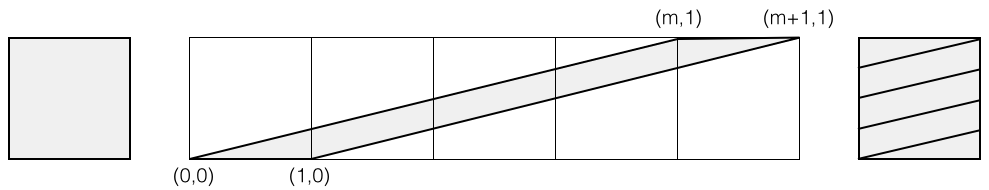}
\caption{The square torus is twisted $m$ times. Notice that when we cut up and reassemble the parallelogram back into the torus, we respect the edge identifications, as required. \label{m-twist}}
\end{center}
\end{figure}
\end{proof}
 
\begin{corollary} \label{howmanytwists}\index{cylinder}\index{modulus}\index{shear}
Suppose that a given surface has cylinders $c_1, c_2, \ldots, c_k$ with moduli $m_1, m_2, \ldots, m_k$, respectively. Let $M$ be an integer multiple of each $m_i$. Then the shear $\stt 1M01$ twists the cylinder $c_i$ $M/m_i$ times.
\end{corollary}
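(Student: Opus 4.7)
The plan is to reduce the statement to the preceding lemma by analyzing each cylinder separately. The key observation is that the shear $\stt 1M01$ fixes horizontal lines setwise (each horizontal line $y=c$ is translated horizontally within itself). Since the cylinder boundaries are horizontal by definition of the cylinder direction, the shear maps each horizontal cylinder $c_i$ to a sheared copy of itself, without mixing different cylinders. So it suffices to count twists one cylinder at a time.

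First I would model $c_i$ as the rectangle $[0,w_i]\times[0,h_i]$ with its left and right vertical edges identified by translation, so that by Definition \ref{def:modulus} we have $m_i=w_i/h_i$. Under the shear $\stt 1M01$, the vertical left edge, represented by the vector $[0,h_i]$, is sent to the vector $[Mh_i,h_i]$. As this image segment rises from height $0$ to height $h_i$, it crosses the translates of the vertical identification edge (located at horizontal positions $nw_i$) exactly
\[
\frac{Mh_i}{w_i}=\frac{M}{m_i}
\]
times. By hypothesis this quotient is an integer, so the sheared parallelogram decomposes cleanly into $M/m_i$ congruent parallelogram strips that can be cut and translated back into the original rectangle, exactly as in the preceding lemma applied to a suitably rescaled torus. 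By definition, this is precisely what it means to say that $c_i$ has been twisted $M/m_i$ times.

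Finally, I would check that reassembling each cylinder independently actually yields a well-defined twisted surface, i.e. that the edge identifications between neighboring cylinders along their shared horizontal boundaries are respected. The shear acts on a horizontal boundary at height $y=c$ as the horizontal translation $x\mapsto x+Mc$; after reassembly this is reduced modulo each adjacent cylinder's width $w_i$, which is consistent precisely because $M/m_i\in\mathbf{Z}$ for every $i$. The main subtlety of the argument is this last compatibility check — the counting of twists inside a single cylinder is immediate from the lemma, but verifying that the global reassembly produces a consistent flat surface requires one to observe that the horizontal translation induced by the shear on each cylinder boundary is a multiple of the widths of the cylinders meeting that boundary, which follows from the integrality hypothesis on $M$.
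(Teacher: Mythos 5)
Your core argument is exactly the one the paper intends (the paper states this corollary with no written proof, treating it as the Lemma rescaled): the shear preserves horizontal lines, so it acts on each horizontal cylinder separately, and the image $[Mh_i,h_i]$ of a vertical cross-section of $c_i$ wraps $Mh_i/w_i=M/m_i$ times around a cylinder of circumference $w_i$. That computation is correct and is all the corollary asserts.

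However, the claim in your final paragraph is false as stated. You assert that the horizontal translation $x\mapsto x+Mc$ induced on a boundary at height $y=c$ is a multiple of the widths of the cylinders meeting that boundary, and that this follows from $M/m_i\in\mathbf{Z}$. It does not, and it is generally untrue: for the regular octagon surface with $M=2(1+\sqrt{2})$ (Example \ref{oct-mod}), the boundary between the central rectangle (width $1+\sqrt{2}$) and the trapezoidal cylinder (width $2+\sqrt{2}$) is translated by an amount that cannot be an integer multiple of both widths, since their ratio $\sqrt{2}$ is irrational --- and the amount itself depends on where you place $y=0$, so it cannot be the right invariant. Fortunately this check is not needed for Corollary \ref{howmanytwists}, which only counts twists; the well-definedness of the reassembled surface (really Corollary \ref{whatcanweshear}) rests only on the \emph{relative} displacement $Mh_i$ between the two boundary circles of each single cylinder being an integer multiple of that cylinder's own width $w_i$. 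The cut-and-translate reassembly within $c_i$ can be chosen to restrict to the identity on its boundary circles precisely because that relative displacement is a whole number of turns, so adjacent cylinders never need to be compared against each other's widths. If you drop or correct that last paragraph, the proof stands.
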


\begin{corollary} \label{whatcanweshear}\index{symmetry}
A given surface has can be sheared in a given direction, in such a way that vertices are taken to vertices, nearby points go to nearby points, and there are no overlaps or gaps, if and only if the moduli of its cylinders in that direction are rationally related.
\end{corollary}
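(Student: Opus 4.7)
The plan is to reduce everything to Corollary \ref{howmanytwists}. After rotating the surface so that the given cylinder direction is horizontal, the allowed shears take the form $\stt 1 M 0 1$ for some real number $M$, and that corollary says such a shear twists cylinder $c_i$ by exactly $M/m_i$ revolutions. The first step I would take is to establish the key geometric criterion: the shear extends to a valid symmetry of the surface (respecting edge identifications with no gaps or overlaps) if and only if each cylinder is twisted an integer number of times, i.e.\ $M/m_i \in \mathbf{Z}$ for every $i$. The reason is that the shear $\stt 1 M 0 1$ displaces points at height $h$ horizontally by $Mh$, so at the top boundary of cylinder $c_i$ (of height $h_i$) the displacement is $Mh_i$, while the horizontal period of the vertex pattern on that boundary is $m_i h_i$; for the cylinder to re-close with vertices landing on vertices, the ratio $Mh_i/(m_i h_i) = M/m_i$ must be an integer.

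With this criterion in hand, the forward direction is straightforward: assuming the moduli $m_1, \ldots, m_k$ are rationally related, I would write $m_i/m_1 = a_i/b_i$ with $a_i, b_i \in \mathbf{Z}_{>0}$, set $M = m_1 \cdot \mathrm{lcm}(a_1, \ldots, a_k)$, and verify that $M/m_i \in \mathbf{Z}$ for each $i$, so $\stt 1 M 0 1$ gives the desired symmetry by Corollary \ref{howmanytwists}. The converse is even easier: if $\stt 1 M 0 1$ is a valid symmetry, then $M/m_i = n_i \in \mathbf{Z}$ for each $i$, and therefore $m_i/m_j = n_j/n_i \in \mathbf{Q}$, so the moduli are rationally related.

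The main obstacle I anticipate is the first step, namely justifying the integer-twist criterion rigorously, and in particular handling seams where two different cylinders meet. I expect the cleanest treatment will be to reduce cylinder by cylinder to the single-cylinder case and then observe that because every such seam is parallel to the shear direction, the shear translates both adjacent cylinders by the same horizontal amount along the seam; so as long as each cylinder individually re-closes correctly, the global reassembly automatically respects the cross-cylinder identifications with no further conditions.
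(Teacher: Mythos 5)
Your proposal is correct and follows essentially the same route the paper intends: the paper leaves this corollary unproved, relying on the remark that a symmetry must take vertices to vertices (hence twist each cylinder an integer number of times) together with Corollary \ref{howmanytwists}, which is exactly your integer-twist criterion $M/m_i \in \mathbf{Z}$. Your write-up simply fills in the details the paper leaves implicit (the boundary/seam compatibility and the explicit choice of $M$ via a least common multiple), and both directions of your argument are sound.
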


\begin{definition}\index{rationally related}
A set of numbers is \emph{rationally related} if all of the numbers are rational multiples of each other.
\end{definition}

We have seen a few examples of surfaces with rationally-related moduli. In the next sections, we will introduce many families of such surfaces.

\newpage
 \section{Square-tiled surfaces} \index{square-tiled surface} \label{squaretiledsurfaces}
One of the simplest ways to create a surface is to start with some squares, and identify opposite parallel edges to create a surface. Some examples are below.

\begin{example}\index{L-shaped table}
Besides the square torus and rectangles, the \lst made from three squares (Figure \ref{shearltablefig}) is the simplest example of a square-tiled surface. 
\end{example}

\begin{example}
The \emph{escalator} \index{escalator} is shown in Figure \ref{sqsurfs} (a), with $4$ levels.  (\cite{animal}, Example 7)  
\end{example}

\begin{example}
The \emph{Eierlegende Wollmilchsau} is shown in Figure \ref{sqsurfs} (b). 
This surface takes its name from the mythical German creature ``egg-laying wool-milk-sow'' that provides everything one might need. This is because this surface has several nice properties, and has served as a counterexample on several occasions (see \cite{animal}, Example 2). \index{Eierlegende Wollmilchsau}
\end{example}

 \begin{figure}[!h]
 \begin{center}
\includegraphics[height=130pt]{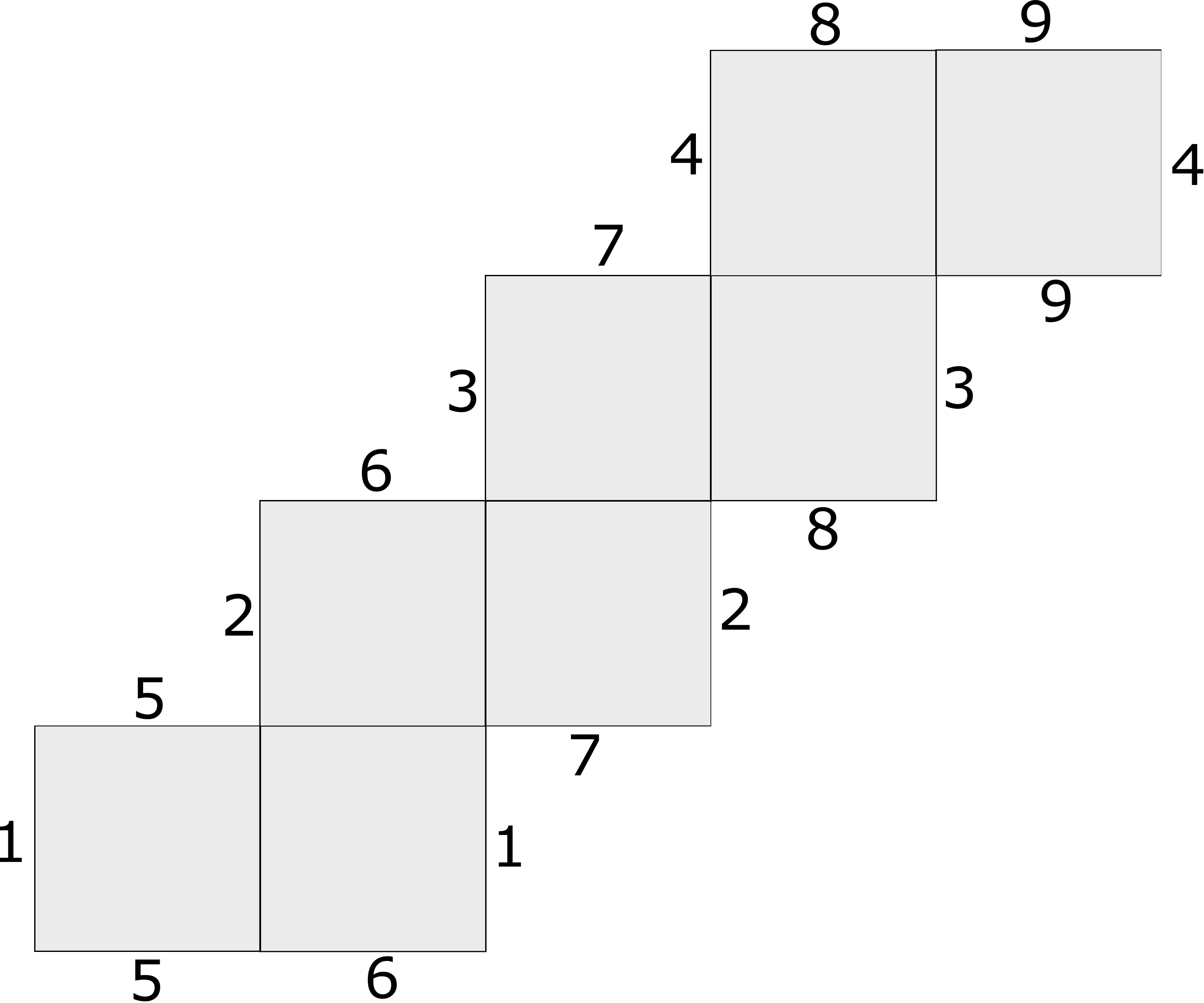} \ \ \ 
\includegraphics[width=140pt]{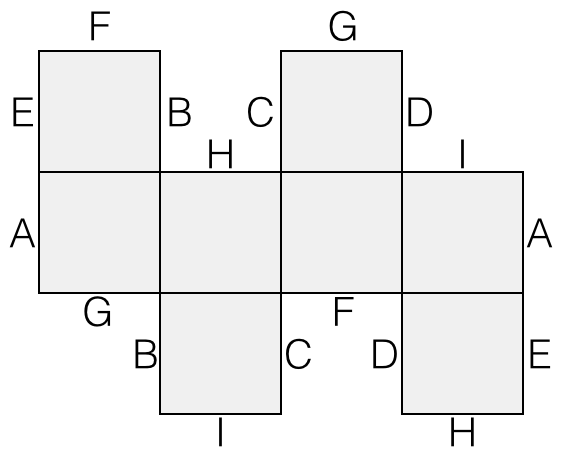}
\caption{(a) The escalator, and (b) the Eierlegende Wollmilchsau. In each surface, edges with the same label are identified. \label{sqsurfs}}
\end{center}
\end{figure}


\begin{exercise} \label{opppar}\index{vertex, angle around}\index{square-tiled surface}
Show that for any square-tiled surface with opposite parallel edges identified, the angle at each vertex is an integer multiple of $2\pi$.
\end{exercise}

\begin{exercise} For each surface in Figure \ref{sqsurfs}:\index{Eierlegende Wollmilchsau}\index{vertex, angle around}
\begin{enumerate}[(a)]
\item Determine how many vertices the surface has, and find the angle around each.
\item Shade each horizontal cylinder a different color, and find the modulus of each.
\end{enumerate}
\end{exercise}

\begin{exercise} For the Eierlegende Wollmilchsau:\index{cylinder}\index{modulus}
\begin{enumerate}[(a)]
\item Shade each vertical cylinder a different color, and find the modulus of each.
\item Redraw the surface so that at least one vertical cylinder is arranged vertically. (You will have to give names to some of the interior edges.)
\end{enumerate}
\end{exercise}

The converse of Exercise \ref{opppar} is: ``If every vertex angle of a given square-tiled surface is a multiple of $2\pi$, then the surface is created by identifying opposite parallel edges.'' This is not true: Figure \ref{crossfig} gives a counterexample, where edges are identified and every vertex angle is a multiple of $2\pi$, but some of the edge identifications are not of opposite parallel edges.

\begin{figure}[!h]
\begin{center}
\includegraphics[height=100pt]{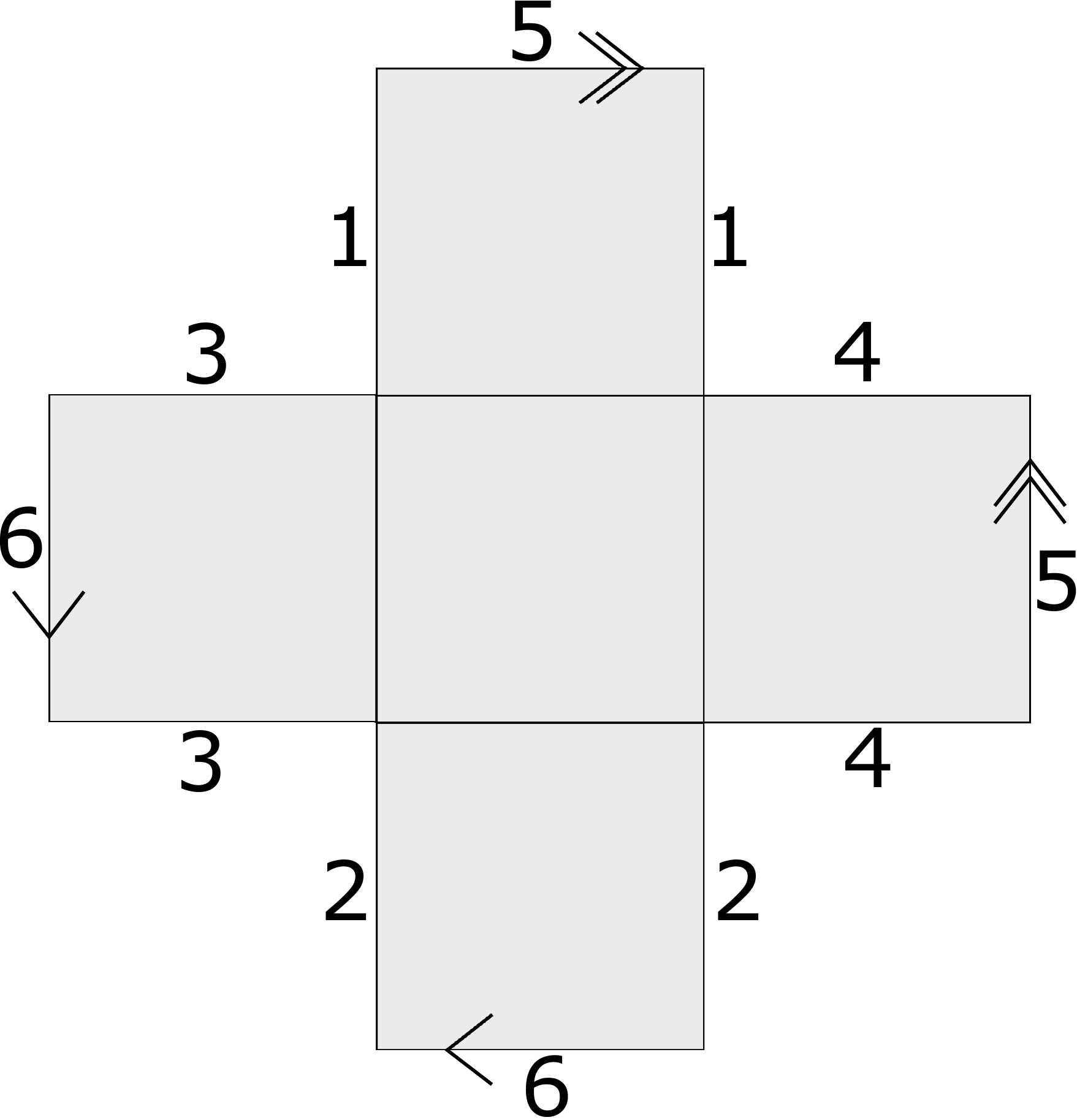}
\caption{A flat surface where some  identifications are \emph{not} opposite and parallel. \label{crossfig}}
\end{center}
\end{figure}

\begin{exercise}\index{square-tiled surface}
Check that every vertex of the surface in Figure \ref{crossfig} has an angle that is a multiple of $2\pi$. \index{vertex, angle around}
\end{exercise}

\begin{exercise}\index{shear}\index{cylinder}\index{square-tiled surface}\index{modulus}
For the square-tiled surface in Figure \ref{cylcalcs}:
\begin{enumerate}[(a)]
 \item Shade each horizontal cylinder a different color, and  find the modulus of each.
  \item Find the values of $M$ for which the horizontal shear $\stt 1M01$ is a symmetry of the surface.
 \item Shade each vertical cylinder a different color, and  find the modulus of each.
  \item Find the values of $M$ for which the vertical shear $\stt 10M1$ is a symmetry of the surface.
 \end{enumerate}
\end{exercise}

\begin{figure}[!h]
\begin{center}
\includegraphics[height=100pt]{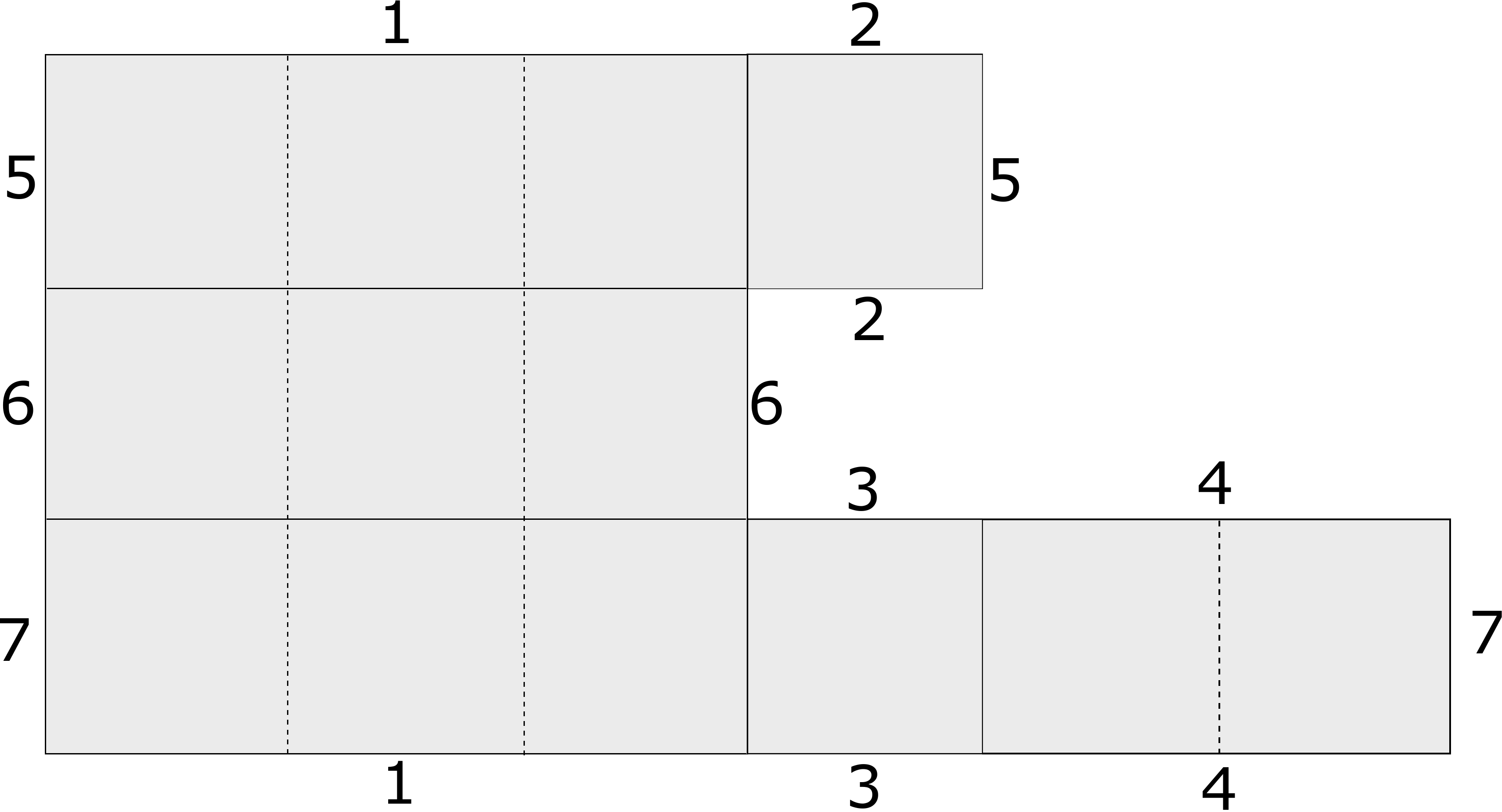}
\caption{Opposite parallel edges with the same label are identified. Dashed lines divide the figure into squares. \label{cylcalcs}}
\end{center}
\end{figure}

We can also shear ``rectangle-tiled surfaces,'' taking vertices to vertices as above, as long as all of the rectangles have the same modulus. The best-studied such surfaces are $\mathsf{L}$-shaped tables.


\begin{example}\index{L-shaped table}\index{golden L}\index{golden ratio}\label{gl-ex}
A particularly beautiful \lst is the ``Golden $\mathsf{L}$'' (Figure \ref{ltableexamples}). In this surface, each cylinder has modulus $\phi$, where $\phi$ is the golden ratio $\frac 12 (1+\sqrt{5})$. Since $\phi=1+1/\phi$, the $\mathsf{L}$ is a square with congruent golden-ratio rectangles glued to the top and side. For an exploration of this surface, see \cite{goldenell}.
\end{example}

 \begin{figure}[!h]
 \begin{center}
\includegraphics[height=110pt]{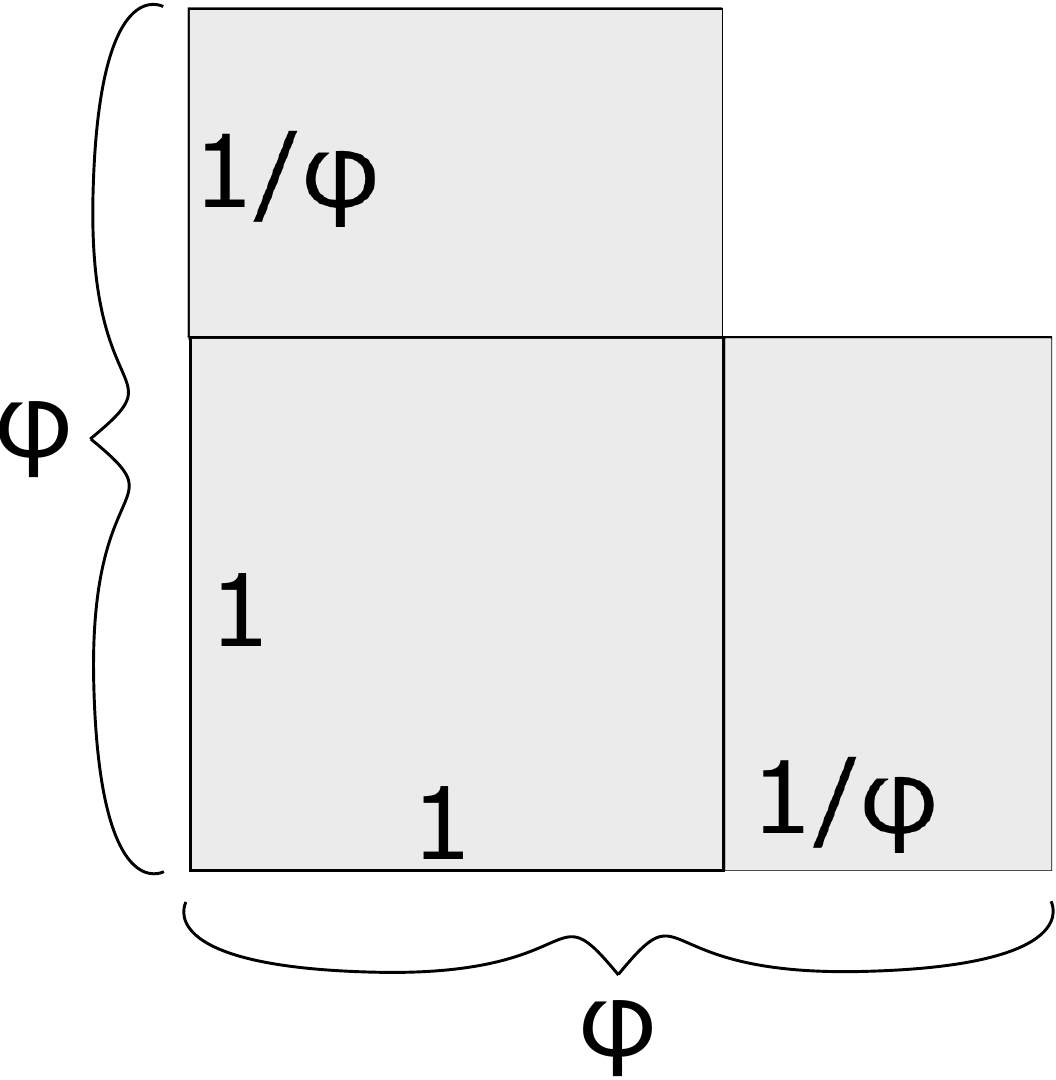}
\caption{The Golden $\mathsf{L}$, whose cylinders have modulus $\phi$ \label{ltableexamples}\index{golden L}\index{golden ratio}}
\end{center}
\end{figure}

\section{Regular polygons and the Modulus Miracle} \index{regular polygon surface} \label{regularpolygons}

Amazingly, many surfaces made from regular polygons can be sheared, cut up and reassembled back into the original surface in the same way that we have done with square-tiled surfaces. This property was first explored by Veech in \cite{Veech}. This property is unexpected and beautiful, so we call it the Modulus Miracle.\index{Veech, William}

\begin{definition} \index{regular polygon surface}
The \emph{double regular $n$-gon surface} is the surface made by identifying opposite parallel edges of two regular $n$-gons, one of which is the reflection of the other. We assume that each $n$-gon has a horizontal edge, and that all edges have unit length.

The \emph{regular $2n$-gon surface} is the surface made by identifying opposite parallel edges of a regular $2n$-gon. We assume that the $2n$-gon has a pair of horizontal edges, and that all edges have unit length. 
\end{definition}


In Example \ref{oct-mod}, we showed that the cylinders of the regular octagon surface have modulus $1+\sqrt{2}$ and $2(1+\sqrt{2})$. Because the cylinders are rationally related, we can shear the regular octagon surface (Corollary \ref{whatcanweshear}), and reassemble the pieces back into a regular octagon, while respecting the edge identifications (Figure \ref{octshearfig}). For an extensive study of this surface, see \cite{SU}.\index{octagon}

\begin{figure}[!h]
\begin{center}
\includegraphics[width=350pt]{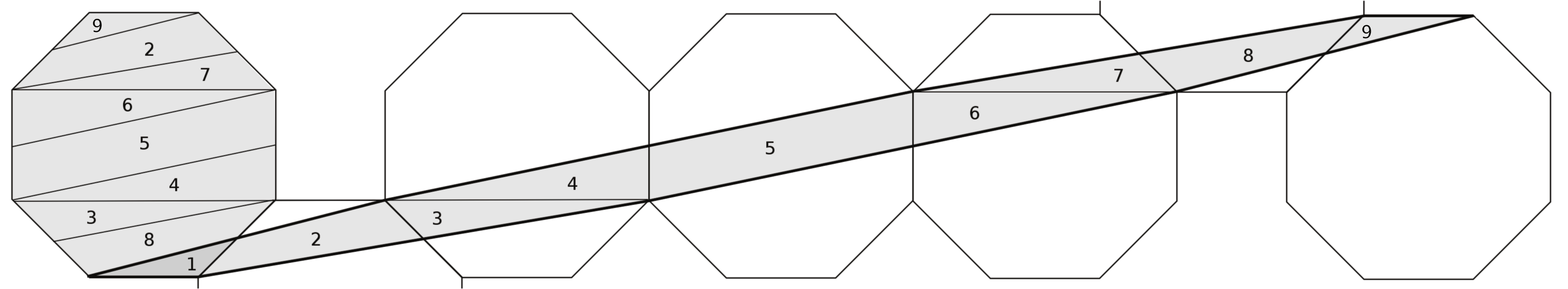}
\caption{We shear the octagon and reassemble the pieces by translation back into an octagon, respecting the edge identifications. \label{octshearfig}\index{shear}\index{octagon}}
\end{center}
\end{figure}

This kind of thing turns out to be true for all regular even polygon surfaces:\index{octagon}

\begin{proposition}[Modulus Miracle, single even polygon case]\label{evenmod} \index{cylinder}\index{modulus}\index{modulus miracle}  \index{regular polygon surface}
Let $n$ be even. Every horizontal cylinder of a regular $n$-gon surface has modulus $2\cot\pi/n$, except a central rectangular cylinder, which has modulus $\cot\pi/n$.\footnote{If $n=4k$, the surface has a central rectangular cylinder; if $n=4k+2$, it does not.}
\end{proposition}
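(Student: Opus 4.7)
The plan is to set up explicit coordinates for the regular $n$-gon, decompose it into horizontal strips bounded by consecutive vertex heights, pair the strips as forced by the edge identifications, and compute each modulus trigonometrically. I would place the $n$-gon centered on the $y$-axis with horizontal bottom edge at $y=0$, and write $m = n/2$ and $\alpha = \pi/n$. The right-hand edges of the $n$-gon are unit segments at angles $2\alpha, 4\alpha, \ldots, 2(m-1)\alpha$ above horizontal, so summing vertical components shows that strip $j$ has vertical extent $h_j = \sin(2j\alpha)$. Summing the horizontal components telescopes via $2\sin\alpha\,\cos(2k\alpha) = \sin((2k+1)\alpha) - \sin((2k-1)\alpha)$ to the closed form
\[
w_j \;=\; \frac{\sin((2j+1)\alpha)}{\sin\alpha}
\]
for the width of the horizontal cross-section of the $n$-gon at the top of strip $j$. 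The vertical reflection symmetry of the polygon gives $w_{m-1-i} = w_i$, so strip $j$ and strip $m-j$ are congruent trapezoids, one a $180^\circ$ rotation of the other.

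Next I would form the cylinder associated to the pair $\{j,\,m-j\}$ for $j\neq m/2$. The opposite-parallel-edge identification matches the right slant of strip $j$ with the left slant of strip $m-j$; translating strip $m-j$ to realize this identification places it immediately to the right of strip $j$, with its top and bottom edges landing on the same two horizontal lines $y=y_{j-1}$ and $y=y_j$. Since the translated right slant of strip $m-j$ is a unit segment parallel to the left slant of strip $j$, the combined figure is a parallelogram of height $\sin(2j\alpha)$ and horizontal width
\[
w_{j-1} + w_j \;=\; \frac{\sin((2j-1)\alpha) + \sin((2j+1)\alpha)}{\sin\alpha} \;=\; \frac{2\sin(2j\alpha)\cos\alpha}{\sin\alpha},
\]
by the sum-to-product identity. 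Dividing width by height yields modulus $2\cos\alpha/\sin\alpha = 2\cot(\pi/n)$, independent of $j$.

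The only strip left out of the pairing is the central one when $m$ is even, indexed $j = m/2$, where $2j\alpha = \pi/2$: its bounding left and right sides are then vertical unit edges, so strip $m/2$ is already a closed rectangular cylinder with no gluing required. Its height is $\sin(\pi/2) = 1$ and its width is $w_{m/2 - 1} = \sin((m-1)\alpha)/\sin\alpha = \cos\alpha/\sin\alpha = \cot(\pi/n)$, giving modulus $\cot(\pi/n)$ as claimed.

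The main obstacle I anticipate is in the second paragraph: one has to track the orientation of the slant identification carefully enough to see that strip $m-j$, once translated, sits flush with strip $j$ at exactly the two heights $y_{j-1}$ and $y_j$, with the interior edges combining into straight horizontal top and bottom sides of length $w_{j-1}+w_j$. Once that planar picture is pinned down, the trigonometric collapse to $2\cot(\pi/n)$ is routine.
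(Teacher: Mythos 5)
Your proposal is correct, and it supplies in full the ``trigonometric calculation'' that the paper's proof only points to (the text defers to \cite{davisthesis}, Lemma 5.4, rather than carrying it out): the telescoping formula $w_j=\sin((2j+1)\alpha)/\sin\alpha$, the pairing of strip $j$ with strip $m-j$ forced by the opposite-parallel identification, and the sum-to-product collapse to $2\cot(\pi/n)$ are all sound, and the central-strip case correctly accounts for the $n=4k$ versus $n=4k+2$ dichotomy in the footnote. As a sanity check, your formulas reproduce Example \ref{oct-mod} exactly when $n=8$: the central rectangle has modulus $\cot(\pi/8)=1+\sqrt2$ and the paired trapezoids give $(2+\sqrt2)/(\sqrt2/2)=2(1+\sqrt2)$.
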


\begin{proof}
This can be proven via a trigonometric calculation (\cite{davisthesis}, Lemma 5.4). It was known to Veech \cite{Veech} much earlier. \index{Veech, William}
\end{proof}

\begin{example}\label{dp}\index{cylinder}\index{modulus}\index{double pentagon}
The double regular pentagon surface has two cylinders. The central cylinder is the union of two trapezoids, and the other cylinder is the union of two triangles. We can calculate their moduli. Notice that the exterior angle at each vertex of a regular pentagon is $2\pi/5$:

The central (dark grey) cylinder has width $2+2\cos(2\pi/5)$ and height $\sin(2\pi/5)$, so its modulus is $$\frac{2+2\cos(2\pi/5)}{\sin(2\pi/5)}=2\cot\pi/5.$$
The other (light grey) cylinder has width $1+2\cos(2\pi/5)$ and height $\sin(4\pi/5)$, so its modulus is $$\frac{1+2\cos(2\pi/5)}{\sin(4\pi/5)}=2\cot\pi/5.$$
\end{example}

\begin{figure}[!h]
\begin{center}
\includegraphics[width=300pt]{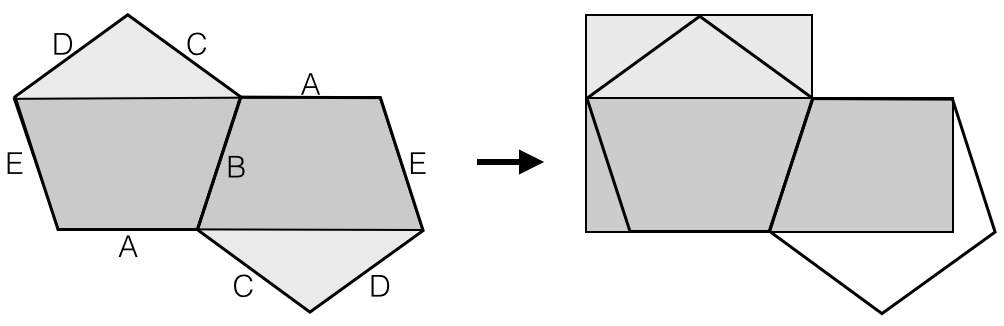}
\caption{The double pentagon surface has two cylinders. We can cut and reassemble the pieces of the cylinders (respecting the edge identifications) to draw them as an $\mathsf{L}$-shaped table. \label{golden-pent}\index{golden L}}
\end{center}
\end{figure}

That both of these simplify to $2\cot\pi/5$ is a trigonometric calculation. The result is true in general:

\begin{proposition}[Modulus Miracle, double polygon case] \label{oddmod}\index{modulus miracle}  \index{regular polygon surface}
Every horizontal cylinder of a double regular $n$-gon surface has modulus $2\cot\pi/n$.
\end{proposition}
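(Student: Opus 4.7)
The plan is to identify each horizontal cylinder of the double $n$-gon as a specific pair of strips (one from each polygon) and then reduce the modulus computation to a single sum-to-product identity.

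First I would set up coordinates. Inscribe the up-$n$-gon in the circle of radius $R=1/(2\sin(\pi/n))$ about the origin, with a horizontal bottom edge, and let the down-$n$-gon be its reflection across a horizontal line placed immediately below. The horizontal line through the $k$-th pair of vertices of the up-$n$-gon (counted from the bottom, $k=0,1,\dots,(n-1)/2$) meets the $n$-gon in a chord whose endpoints lie at angles $-\pi/2\pm(2k+1)\pi/n$ on the circumscribed circle, so the width at level $k$ is
\[
w_k \;=\; \frac{\sin((2k+1)\pi/n)}{\sin(\pi/n)},
\]
and the height of strip $k$ (the region between levels $k-1$ and $k$) is $h_k=\sin(2\pi k/n)$. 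In particular $w_0=1$ (the bottom edge) and $w_{(n-1)/2}=0$ (the top vertex, so the topmost strip is a triangle).

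Next I would argue that cylinder $k$ consists of exactly the $k$-th strip of each polygon. A horizontal cylinder is a union of whole strips, and the two strips joined across any edge identification must share the same height because identification is by translation. Since $n$ is odd, the heights $h_1,\dots,h_{(n-1)/2}$ are pairwise distinct (the equation $\sin(2\pi k/n)=\sin(2\pi k'/n)$ would force $k+k'=n/2$, which has no integer solutions), so height uniquely determines the partner strip. The $180^\circ$ rotational symmetry of the double-$n$-gon exchanges the two polygons and pairs the right slant of strip $k$ of the up-$n$-gon with the left slant of strip $k$ of the down-$n$-gon counted from its top; tracing a horizontal trajectory as in Example \ref{dp} confirms that it closes after one traversal of each of these two strips.

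Finally, I compute the modulus. Strip $k$ is a trapezoid with parallel sides $w_{k-1}$ and $w_k$ and height $h_k$, hence has area $(w_{k-1}+w_k)h_k/2$. Cylinder $k$ consists of two such congruent trapezoids glued along their slants, so its area is $(w_{k-1}+w_k)h_k$ and, as a flat Euclidean cylinder, its width (circumference) equals area divided by height:
\[
\text{width} \;=\; w_{k-1}+w_k \;=\; \frac{\sin((2k-1)\pi/n)+\sin((2k+1)\pi/n)}{\sin(\pi/n)} \;=\; \frac{2\sin(2\pi k/n)\cos(\pi/n)}{\sin(\pi/n)},
\]
by the identity $\sin A+\sin B=2\sin\tfrac{A+B}{2}\cos\tfrac{A-B}{2}$. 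Dividing by $h_k=\sin(2\pi k/n)$ leaves $2\cos(\pi/n)/\sin(\pi/n)=2\cot(\pi/n)$, independently of $k$. The main obstacle is not this final collapse but the combinatorial step of justifying the strip-to-strip pairing, which crucially uses the oddness of $n$ to rule out two distinct strips of the same polygon having equal height; everything afterward is a single application of sum-to-product.
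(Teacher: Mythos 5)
Your proof is correct, and it supplies something the paper does not: the text's ``proof'' of Proposition~\ref{oddmod} is only a pointer to a calculation in \cite{davisthesis} (Lemma~5.3), together with the worked $n=5$ instance in Example~\ref{dp}. Your general formulas specialize exactly to that example (e.g.\ $w_0+w_1=1+2\cos(\pi/5)=2+2\cos(2\pi/5)$ and $w_1+w_2=2\cos(\pi/5)=1+2\cos(2\pi/5)$), which is a good sanity check. What your write-up adds is (i) clean closed forms $w_k=\sin((2k+1)\pi/n)/\sin(\pi/n)$ and $h_k=\sin(2\pi k/n)$ from the circumradius $R=1/(2\sin(\pi/n))$, so that the $k$-independence of the modulus falls out of a single sum-to-product identity rather than a case-by-case simplification; (ii) the area-over-height computation of the circumference, which sidesteps having to develop the glued trapezoids explicitly; and (iii) an actual justification of the combinatorial step that each horizontal cylinder is exactly one strip from each polygon --- the step the paper and Example~\ref{dp} take for granted from the picture. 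One small remark on that step: your height-distinctness argument does use $n$ odd (for $n$ even one has $h_k=h_{n/2-k}$), which is consistent with the paper's intended scope (cf.\ Theorem~\ref{kosl-reg}, and the single-polygon even case is Proposition~\ref{evenmod}), but your parallel observation that the $180^\circ$ rotation exchanging the two polygons carries each edge to its identified partner already pins down the strip pairing directly, so the distinct-heights argument is a belt to go with those suspenders rather than a load-bearing hypothesis. It would be worth stating explicitly that the glued pair of trapezoids is a topological annulus with no interior singularities, hence a single flat cylinder, before invoking circumference $=$ area$/$height; with that sentence added the argument is complete and self-contained.
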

\begin{proof}
Again, this can be proven by calculation (\cite{davisthesis}, Lemma 5.3). 
\end{proof}\index{modulus}

\begin{exercise}
\begin{enumerate}[(a)]
\item For the double pentagon surface, find the ratio of the width of the larger cylinder to the width of the smaller cylinder. Simplify your answer to a familiar form.
\item Explain why the double pentagon surface is just a cut, reassembled, and horizontally stretched version of the Golden $\mathsf{L}$ from Example \ref{gl-ex}. \index{golden L}\index{cylinder}\index{double pentagon}\index{golden ratio}
\end{enumerate}
\end{exercise}

Because the cylinders have the same modulus, we can shear the double pentagon surface, and reassemble the pieces back into a double pentagon, while respecting the edge identifications. For an extensive study of this surface, see \cite{davis} and \cite{dft}. For a video showing this shearing and reassembling, see \cite{dv12}. \index{double pentagon}

\begin{figure}[!h]
\begin{center}
\includegraphics[width=300pt]{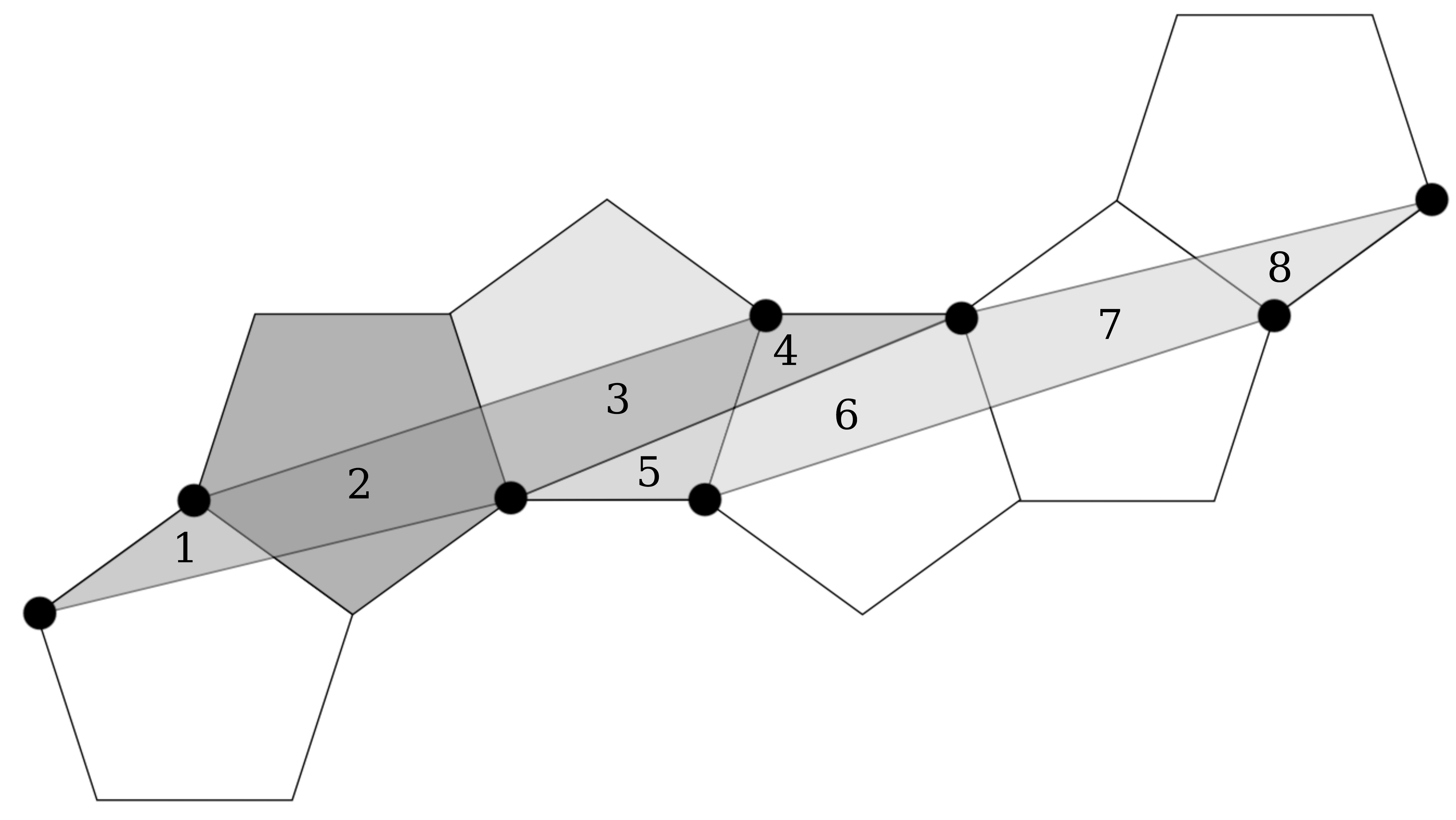}
\caption{We shear the double pentagon via the matrix $[1,2\cot \pi/n;0,1]$. \label{dpshear}}
\end{center}
\end{figure}

\begin{exercise}\index{double pentagon}\index{shear}
\begin{enumerate}[(a)]
\item Show how to reassemble (by translation, and respecting the edge identifications) the 8 pieces of the sheared double pentagon in Figure \ref{dpshear} back into two regular pentagons, as is done with the octagon in Figure \ref{octshearfig}.
\item Explain why the even-numbered pieces end up in one pentagon and the odd-numbered pieces in the other.
\end{enumerate}
\end{exercise}

When we sheared, cut and reassembled the square torus in Section \ref{sq-tor-symm}, we were able to give a rule for the effect of this action on a cutting sequence corresponding to a trajectory on the surface (Theorem \ref{kosl}). That rule was: \emph{Given a trajectory $\tau$ on the square torus with slope greater than $1$, and its corresponding cutting sequence $c(\tau)$, let $\tau'$ be result of applying $\nupshear$ to $\tau$. To obtain $c(\tau')$ from $c(\tau)$, shorten each string of $A$s by $1$.}\index{cutting sequence}

We can do the same for the double pentagon, octagon, and other regular polygon surfaces: \index{regular polygon surface}

\begin{theorem} \label{kosl-reg} \index{regular polygon surface}\index{sandwiched}\index{cutting sequence}
Given a trajectory $\tau$ on the double regular $n$-gon surface for $n$ odd, or on the regular $n$-gon surface for $n$ even, where the slope of $\tau$ is between  $0$ and $\pi/n$, and its corresponding cutting sequence $c(\tau)$, let $\tau'$ be result of applying $\stt {-1}{2\cot \pi/n}01$ to $\tau$. To obtain $c(\tau')$ from $c(\tau)$, keep only the \emph{sandwiched} letters.
\end{theorem}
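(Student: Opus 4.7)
My plan is to follow the blueprint of the proof of Theorem~\ref{kosl}, with the cylinder decomposition guaranteed by the Modulus Miracle playing the role that the single-cylinder structure of the square torus played before. The first step is to check that $M=\stt{-1}{2\cot\pi/n}{0}{1}$ really is a combinatorial symmetry of the surface: decomposing into horizontal cylinders, Propositions~\ref{evenmod} and~\ref{oddmod} tell us that every cylinder modulus is either $2\cot\pi/n$ or (in the even case) $\cot\pi/n$, so the shear part of $M$ twists each cylinder an integer number of times. By Corollary~\ref{howmanytwists} the sheared surface can be cut along cylinder boundaries and re-translated into the original polygon shape; the negative top-left entry in $M$ encodes the extra horizontal flip needed so that the reassembly respects the original edge labeling.

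Next I would augment the cutting sequence $c(\tau)$ along the lines of the ``$a$'' and ``$b$'' bookkeeping used for the torus in the derivation preceding Theorem~\ref{kosl} and in Figure~\ref{favorite-shear}. Concretely, apply $M^{-1}$ to each edge of the polygon and draw the resulting segments inside the original surface; these preimages lie inside the original cylinders and cut each cylinder into shorter horizontal strips. Because the slope of $\tau$ is forced to be nearly horizontal (i.e.\ between $0$ and $\tan(\pi/n)$), one can read off purely from $c(\tau)$ which of these auxiliary segments $\tau$ crosses between any two consecutive crossings of actual polygon edges, inserting an auxiliary lowercase letter at each such crossing.

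The third step is the heart of the argument: determining which of the original (uppercase) letters survive the passage from $c(\tau)$ to $c(\tau')$. After shearing, an original crossing of an edge $X$ becomes a crossing of the translated segment $M(X)$, which lies in the interior of the reassembled polygon \emph{unless} the local combinatorial configuration around $X$ causes $M(X)$ to coincide with an edge of the reassembled polygon. A careful case analysis, driven by which cylinder contains the piece of $\tau$ immediately before and immediately after the crossing labeled $X$, shows that $X$ survives precisely when the pair (predecessor of $X$, successor of $X$) in $c(\tau)$ lies in a particular two-letter set determined by the edge labeling around $M(X)$. This is exactly the \emph{sandwiched} condition, and after the auxiliary letters from the previous step are discarded, $c(\tau')$ appears as the subword of sandwiched letters of $c(\tau)$.

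The main obstacle will be the combinatorial bookkeeping: unlike the torus, there are several cylinders in each direction, their widths differ, and edges are shared between cylinders in an intricate way, so the sandwiching predicate depends on the global edge labeling rather than just on whether neighbors are $A$ or $B$. For the octagon this is carried out explicitly in Smillie--Ulcigrai~\cite{SU}; my plan would be to first verify the statement by hand on the double pentagon and octagon using Example~\ref{dp} and Figure~\ref{octshearfig}, and then leverage the $n$-fold rotational symmetry of the $n$-gon surface to reduce the general case analysis to a single cylinder-boundary transition.
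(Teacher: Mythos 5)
The paper does not actually prove Theorem \ref{kosl-reg}: it defers to \cite{SU} for the even case and to \cite{davis} for the odd case, so there is no internal argument to measure yours against. That said, your first two steps do match the strategy of those references and of the torus blueprint (Theorem \ref{kosl}): Propositions \ref{evenmod} and \ref{oddmod} together with Corollary \ref{howmanytwists} show that the shear twists every horizontal cylinder an integer number of times (twice for the central rectangular cylinder in the even case, once for all the others), so the sheared surface can be cut and re-translated into the original polygon; and pulling the edges of the reassembled polygon back through the affine map yields a system of diagonals of the original polygon whose crossings by $\tau$, recorded as auxiliary lowercase letters, are exactly the letters of $c(\tau')$. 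This is the augmentation used for the torus before Theorem \ref{kosl} and in Figure \ref{favorite-shear}.

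The gap is in your third step, which is where the entire content of the theorem lives. The mechanism you describe is not the right one: for a fixed edge $X$ of the polygon, $M(X)$ is a fixed segment that either does or does not land on the boundary of the reassembled polygon, independently of any trajectory, so ``$X$ survives iff $M(X)$ coincides with an edge'' would select a fixed subset of edge labels rather than the trajectory-dependent set of sandwiched positions; and in general the letters of $c(\tau')$ arise from crossings of the pulled-back diagonals $M^{-1}(X')$ at points of $\tau$ entirely different from the old crossing points, so nothing literally ``survives.'' What must be proved is the combinatorial statement that in the augmented sequence each lowercase letter sits at a sandwiched position of $c(\tau)$ and carries that position's label, and conversely --- and this is precisely the transition-diagram case analysis (which two-letter transitions are geometrically realizable in each cylinder for directions in $[0,\pi/n]$, and which of them force a diagonal crossing) that occupies the bulk of \cite{SU} and \cite{davis}. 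Asserting that ``a careful case analysis shows'' the sandwiched condition leaves the theorem unproved, and your proposed shortcut via the rotational symmetry does not rescue it: rotation carries the horizontal cylinder decomposition to a decomposition in a different direction, so it normalizes the sector of directions (which the hypothesis already does) but does not collapse the per-edge analysis. You should also verify, rather than only remark on, the role of the $-1$ entry: as the paper notes in the example surrounding Figure \ref{bece-shear}, it is the horizontal flip that makes the rule literally ``keep sandwiched letters'' instead of ``keep sandwiched letters and permute the edge labels.''
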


The even case is proved in \cite{SU} and the odd case in \cite{davis}.

\begin{definition}\index{sandwiched}
A \emph{sandwiched} letter is one where the same letter both precedes and follows it. 
\end{definition}


\begin{example}\index{cutting sequence}\index{sandwiched}\index{double pentagon}
In Figure \ref{pent-cyl}d, we found cutting sequences $\overline{\mathbf{B}E\mathbf{C}E}$ and $\overline{\mathbf{A}BEC\mathbf{D}CEB}$ on the double pentagon surface. Applying the shear $\stt {-1}{2\cot \pi/5}01$ to each of these trajectories results in trajectories with corresponding cutting sequences $\overline{BC}$ and $\overline{AD}$, respectively (the sandwiched letters are bold). The original cutting sequences correspond to parallel trajectories, and you can check that the trajectories $\overline{BC}$ and $\overline{AD}$ are also parallel, and are in the cylinder direction of Figure \ref{pent-cyl}b. 

The matrix $\stt {-1}{2\cot \pi/5}01=\stt {1}{2\cot \pi/5}01 \stt {-1}001$ is a horizontal flip followed by a horizontal shear. Figure \ref{bece-shear} shows how the trajectory $\overline{BECE}$ is flipped, sheared and reassembled into the trajectory $\overline{BC}$.

\begin{figure}[!h]
\begin{center}
\includegraphics[width=320pt]{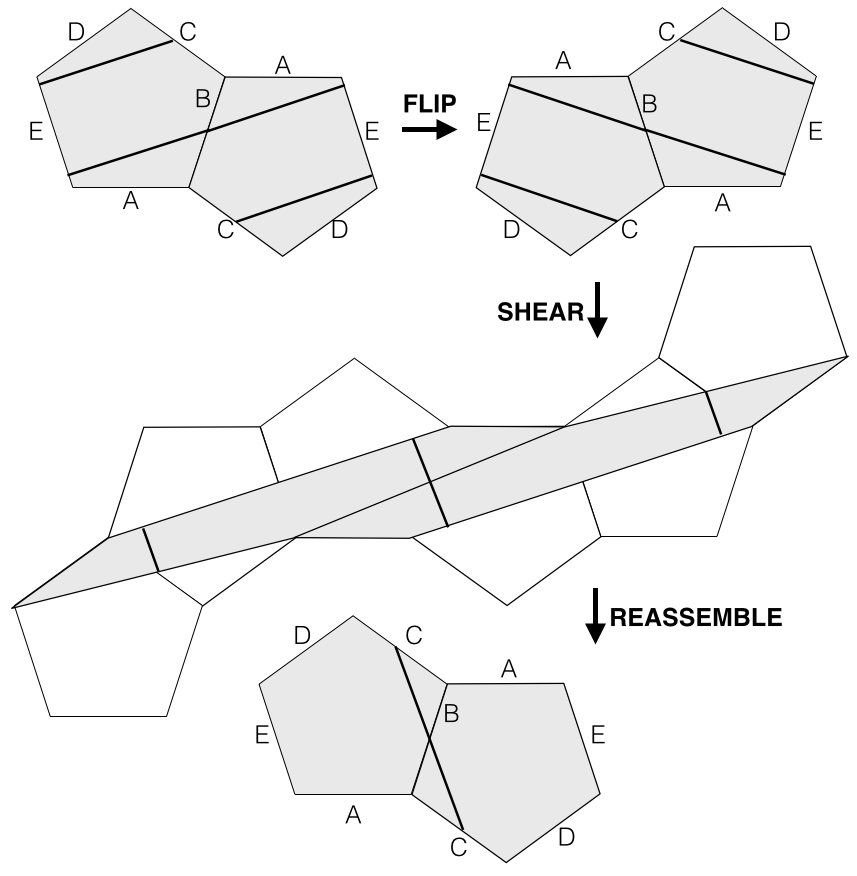}
\caption{We flip and shear the trajectory $\overline{BECE}$ on the double pentagon, and obtain the trajectory $\overline{BC}$. \label{bece-shear}}
\end{center}
\end{figure}

You might wonder why we are using $\stt {-1}{2\cot \pi/5}01$ instead of $\stt {1}{2\cot \pi/5}01$. We choose to describe the effect of the matrix $\stt {-1}{2\cot \pi/5}01$ because it induces the effect ``keep only sandwiched letters'' on the associated cutting sequence, while the matrix $\stt {1}{2\cot \pi/5}01$ induces the effect ``keep only sandwiched letters, and also permute the edge labels,'' where the edge labels are permuted based on the horizontal flip: $B$ and $E$ are reversed, and $C$ and $D$ are reversed.  We choose to describe the more elegant action. For more details, see \cite{davisthesis}.
\end{example}

\begin{exercise}
The rule in Theorem \ref{kosl} is ``shorten each string of $A$s by $1$,'' and the rule in Theorem \ref{kosl-reg} is ``keep only the sandwiched letters.''  For a cutting sequence on the square torus, are these equivalent? If not, can you reconcile them?
\end{exercise}

\newpage
To see the shearing and reassembling in Figure \ref{bece-shear} in live action, see the video: \url{https://vimeo.com/47049144} \cite{dv12}.

\section{Billiards on triangular tables}

Our original motivation for studying the square torus was that it was the unfolding of the square billiard table. In fact, we can view \emph{all} regular polygon surfaces as unfoldings of \emph{triangular} billiard tables.

\begin{example}\label{unfold-octagon}
We  unfold the $(\pi/2,\pi/8,3\pi/8)$ triangular billiard table until every edge is paired with a parallel, oppositely-oriented partner edge:

\begin{figure}[!h]
\begin{center}
\includegraphics[height=110pt]{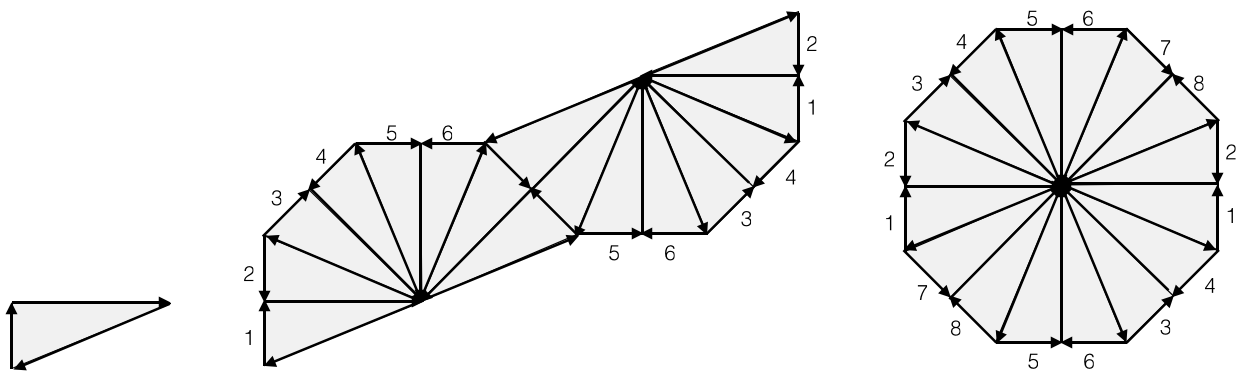}
\caption{We unfold the right triangle with vertex angle $\pi/8$ into the regular octagon surface. \label{unfold-octagon}}
\end{center}
\end{figure}

This gives us the regular octagon surface! So the regular octagon surface is the unfolding of the $(\pi/2,\pi/8,3\pi/8)$ triangle.
\end{example}

\begin{proposition}
A billiard path on the $(\pi/2,\pi/n,\pi-\pi/2-\pi/n)$ triangle corresponds to a trajectory on the regular $n$-gon surface for $n$ even and to a trajectory on the double regular $n$-gon surface for $n$ odd.
\end{proposition}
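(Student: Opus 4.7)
The plan is to generalize the construction illustrated in Example~\ref{unfold-octagon}: decompose the target surface into congruent copies of the right triangle $T$ with angles $(\pi/2,\pi/n,\pi/2-\pi/n)$, and then verify that any two triangles in this decomposition that share an edge are mirror images of each other across that edge. Once this is established, the correspondence between billiard trajectories in $T$ and straight-line trajectories on the surface is immediate from the standard unfolding procedure: reflecting $T$ across the edge the ball hits is exactly the operation that passes from one triangle of the decomposition to the next.

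First I would triangulate each $n$-gon by drawing its $n$ apothems (center to edge midpoints) and its $n$ radii (center to vertices). At the center the angle between an apothem and the adjacent radius is $\pi/n$; the apothem meets the edge perpendicularly; and the radius meets the edge at $\pi/2 - \pi/n$. So this cuts the $n$-gon into $2n$ copies of $T$. Hence the regular $n$-gon surface ($n$ even) decomposes into $2n$ copies of $T$, and the double regular $n$-gon surface ($n$ odd) into $4n$ copies.

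Next I would check that two triangles sharing an edge are reflections of one another across that edge. For an edge lying on an apothem or a radius this is immediate from the reflective symmetry of the regular $n$-gon across that line. The only nontrivial case is an edge that lies on the boundary of an $n$-gon, which is glued by translation to a parallel, oppositely oriented half-edge. Placing the $n$-gon with center at the origin and one edge along $y=-h$ (where $h$ is the apothem), the triangle on the interior side has vertices $O=(0,0)$, $M=(0,-h)$, $V=(\tfrac{1}{2},-h)$; translating by $(0,2h)$ to glue the bottom edge to the top lands this triangle on the triangle with vertices $(0,0),(0,h),(\tfrac12,h)$, which is precisely the mirror image across the identified edge of the triangle on the interior side of the top edge. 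The same coordinate check handles the double $n$-gon case, since the second polygon is a reflection of the first and parallel boundary edges are again glued by translation.

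With the reflection property in hand, the conclusion follows. A billiard trajectory in $T$ unfolds into a straight line traversing a chain of reflected copies of $T$; by the previous step this chain embeds into the triangular decomposition of the surface, giving a straight-line trajectory there. Conversely, a straight-line trajectory on the surface crosses a sequence of triangles in the decomposition, and folding successive triangles back by reflection returns a billiard path in $T$. The main obstacle is the coordinate verification in the second step for boundary edges; this amounts to the observation that the apothem perpendicular to any boundary edge is carried by the translation gluing to the apothem perpendicular to the identified edge, which is automatic from the regularity of the $n$-gon and the fact that identifications are of opposite parallel edges.
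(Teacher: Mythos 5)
Your proof is correct, and it runs the paper's argument in the opposite direction while supplying details the paper leaves implicit. The paper starts from the triangle and unfolds: reflect the $(\pi/2,\pi/n,\pi/2-\pi/n)$ triangle repeatedly until every free edge acquires a parallel, oppositely oriented partner, and observe that the result is the regular $n$-gon for $n$ even (exactly as in the octagon example) and, because a single odd $n$-gon has no parallel edge pairs, the double $n$-gon for $n$ odd. You instead start from the known surface, tile it by $2n$ (resp.\ $4n$) copies of the triangle using apothems and radii, and verify the local mirror condition across every shared edge --- including the translation-glued boundary half-edges, which is precisely the step the paper's two-sentence proof takes for granted. What your route buys is an actual verification that the unfolding closes up compatibly with the edge identifications (the gluing translation carries edge midpoints to edge midpoints, so half-edges and their adjacent triangles correspond); what the paper's route buys is an explanation of \emph{why} one polygon suffices for even $n$ but two are needed for odd $n$, a fact that in your framing enters only as an input, since you are handed the correct surface to decompose rather than deriving it. Both arguments are sound; yours is the more rigorous, the paper's the more illuminating about where the dichotomy between the even and odd cases comes from.
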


\begin{proof}
In the even case, the unfolding is exactly as in Example \ref{unfold-octagon}. In the odd case, we unfold the triangle into a single $n$-gon, but there are no pairs of parallel edges, so we must continue to unfold, and we end up with pairs of oppositely-oriented parallel copies of the same edge when we have the double $n$-gon surface.
\end{proof}

\begin{definition}
A \emph{rational polygon} is a polygon whose angles are all rational multiples of $\pi$.
\end{definition}

For a billiard table that is a rational polygon, the unfolding requires a finite number of copies of the table in order to end up with pairs of oppositely-oriented parallel copies of the same edge. Every such table has many periodic billiard paths; in fact, each has infinitely many periodic directions. For further reading about billiards on polygonal tables, see \cite{masur}  and \cite{kenyon}.

\section{Ward surfaces}\index{Ward surface}\index{Ward, Clayton} \label{wardsurfaces}

For some time, square-tiled surfaces and regular polygon surfaces were the only known examples of surfaces that have all of the symmetries we listed for the square torus: rotation, reflection, and the shear. (Such surfaces are known as \emph{Veech surfaces}, or \emph{lattice surfaces}.) Then Veech's student, Clayton Ward, discovered a larger family of such surfaces, now known as Ward surfaces \cite{Ward}.\index{Veech surface} We will give two different constructions of Ward surfaces.

One way to describe a Ward surface is as a regular $2n$-gon with two regular $n$-gons, where the odd-numbered edges of the $2n$-gon are glued to one of the $n$-gons, and the even-numbered edges of the $2n$-gon are glued to the other $n$-gon.

\begin{example}\label{oct2sq}\index{Ward surface}\index{octagon}
 For $n=4$, the Ward surface is an octagon and two squares (Figure \ref{m34}).

\begin{figure}[!h]
\begin{center}
\includegraphics[height=140pt]{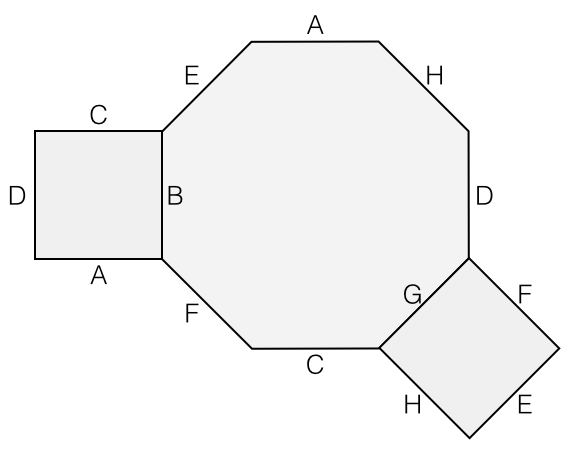}
\caption{The $n=4$ Ward surface: a regular octagon, half of whose edges are glued to the square, and the other half to the diamond \label{m34}}
\end{center}
\end{figure}

\end{example}

\begin{exercise}\index{cylinder}
Decompose the surface in Example \ref{oct2sq} into horizontal cylinders, and find the modulus of each. Simplify your answers to a form where you can compare them. Are they rationally related?
\end{exercise}


The other way to describe a Ward surface is as the unfolding of the $(\pi/n, \pi/2n, \pi-\pi/n-\pi/2n)$ triangle.

\begin{example}\label{oct2sq}\index{Ward surface}\index{octagon}
 For $n=4$, the Ward surface is the unfolding of the $(\pi/4, \pi/8, 5\pi/8)$ triangle (Figure \ref{wardsurfacefig}).

\begin{figure}[!h]
\begin{center}
\includegraphics[height=80pt]{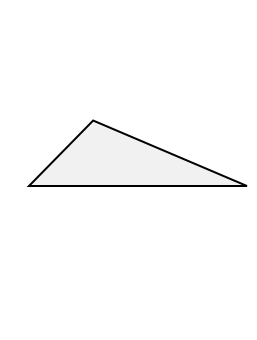} \ \  
\includegraphics[height=120pt]{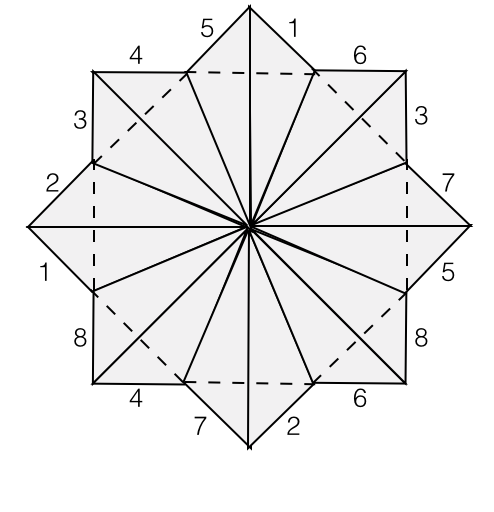} \ \ 
\includegraphics[height=120pt]{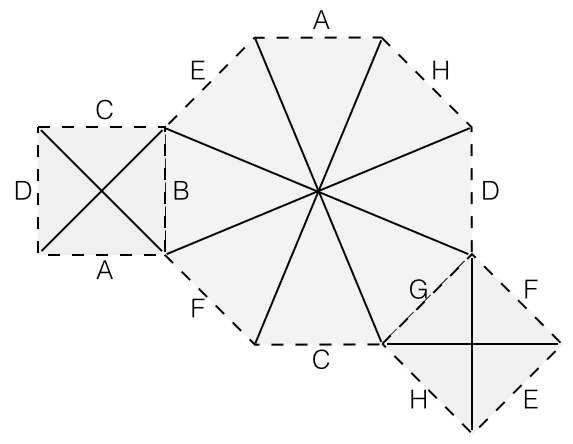}
\caption{The $(\pi/8, \pi/4, 5\pi/8)$ triangle, its unfolding into a Ward surface, and cutting and reassembling the surface into the presentation of Figure \ref{m34}. \label{wardsurfacefig}}
\end{center}
\end{figure}

\end{example}

For each integer $n\geq 3$, there is a Ward surface, consisting of a regular $2n$-gon and two regular $n$-gons, or equivalently the unfolding of the \mbox{$(\pi/n, \pi/2n, \pi-\pi/n-\pi/2n)$} triangle into a kind of ``sunburst'' figure.

\begin{exercise}
Draw the $n=5$ Ward surface, whichever presentation you choose. Label the edge identifications.
\end{exercise}

\section{Bouw-M\"oller surfaces}\index{Bouw-M\"oller surface}\index{Bouw, Irene} \index{M\"oller, Martin} \label{bmsurfaces}

In 2006, Irene Bouw and Martin M\"oller discovered a larger family of Veech surfaces, now called Bouw-M\"oller surfaces \cite{BM}. The regular polygon surfaces and the Ward surfaces are special cases of Bouw-M\"oller surfaces. Bouw and M\"oller gave an algebraic description of the surfaces, and later, Pat Hooper \index{Hooper, Pat} found a polygon decomposition for the surfaces, which we present here \cite{Hooper}.

For any $m\geq 2$, and any $n\geq 3$, the $(m,n)$ Bouw-M\"oller surface is created by identifying opposite parallel edges of $m$ semi-regular\footnote{A \emph{semi-regular polygon} is an equiangular polygon with an even number of sides. Edge lengths alternate between two different values, which may be equal and may be $0$.} $2n$-gons, each of whose edge lengths are carefully chosen so that the cylinders all have the same modulus. For the precise definition, see \cite{Hooper}, \textsection $4.2$. In lieu of giving the definition here, we give several examples.\index{cylinder}\index{modulus}

\begin{example}\index{double pentagon} \index{Bouw-M\"oller surface}
The $(2,n)$ Bouw-M\"oller surface is made from two regular $n$-gons, and it is the double regular $n$-gon. For example, the double pentagon in Example \ref{dp} is the $(2,5)$ Bouw-M\"oller surface.
\end{example}

\begin{example}\index{octagon} \index{Ward surface}
The $(3,n)$ Bouw-M\"oller surface is made from three polygons, the first and last of which are regular $n$-gons and the middle of which is a regular $2n$-gon, and it is the $n$ Ward surface. For example, the regular octagon and two squares in Example \ref{oct2sq} is the $(3,4)$ Bouw-M\"oller surface.
\end{example}

\begin{example}\label{bmex} \index{Bouw-M\"oller surface}
The $(6,5)$ Bouw-M\"oller surface is made of $6$ polygons, the first and last of which are regular $5$-gons and the middle $4$ of which are semi-regular $10$-gons.

\begin{figure}[!h]
\begin{center}
\includegraphics[width=350pt]{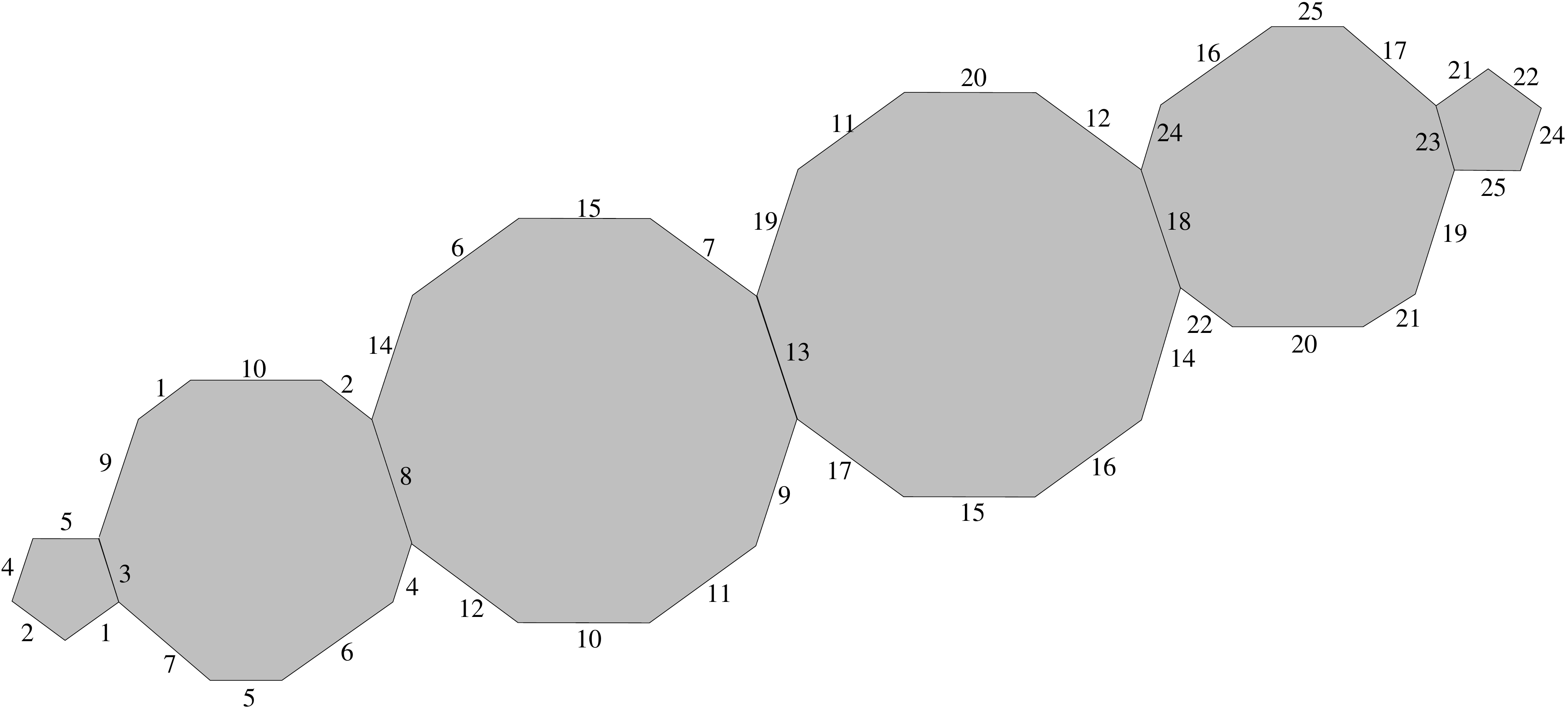} 
\caption{The $(6,5)$ Bouw-M\"oller surface. \label{m6n5}}
\end{center}
\end{figure}

\end{example}

\begin{exercise} \index{Bouw-M\"oller surface}\index{cylinder}\index{modulus}
For the surface in Figure \ref{m6n5}, shade each horizontal cylinder differently, as in Figure \ref{golden-pent}. Does it seem plausible that all of the cylinders have the same modulus?
\end{exercise}

%
%
%
%




\printindex

\end{document}